\newtheorem{thm}{Theorem}
\newtheorem{cor}[thm]{Corollary}
\newtheorem{defi}[thm]{Definition}
\newtheorem{rem}[thm]{Remark}
\newtheorem{nota}[thm]{Notation}
\newtheorem{princ}[thm]{Principle}
\newtheorem{tempie}[thm]{Template}
\newtheorem*{tempo*}{Template}
\newcommand\be{\begin{equation}}
\newcommand\ee{\end{equation}}
\newbox\gnBoxA
\newdimen\gnCornerHgt
\newdimen\gnArgHgt
\def\Godelnum #1{%
	\setbox\gnBoxA=\hbox{$#1$}%
	\gnArgHgt=\ht\gnBoxA%
	\ifnum \gnArgHgt<\gnCornerHgt
		\gnArgHgt=0pt%
	\else
		\advance \gnArgHgt by -\gnCornerHgt%
	\fi
	\raise\gnArgHgt\hbox{$\ulcorner$} \box\gnBoxA %
		\raise\gnArgHgt\hbox{$\urcorner$}}
\def\bdefi{\begin{defi}\rm}
\def\edefi{\end{defi}}
\def\bnota{\begin{nota}\rm}
\def\enota{\end{nota}}
\def\brem{\begin{rem}\rm}
\def\erem{\end{rem}}
\def\STP{\textup{\textsf{STP}}}
\def\H{\textup{\textsf{H}}}
\def\RCA{\textup{\textsf{RCA}}}
\def\RCAo{\textup{\textsf{RCA}}_{0}^{\omega}}
\def\ef{\textup{\textsf{ef}}}
\def\WKL{\textup{\textsf{WKL}}}
\def\IVT{\textup{IVT}}
\def\IVT{\textup{\textsf{IVT}}}
\def\UWKL{\textup{\textsf{UWKL}}}
\def\T{\mathcal{T}}
\def\bye{\end{document}}
\def\P{\textup{\textsf{P}}}
\def\R{{\mathbb  R}}
\def\I{{\textsf{\textup{I}}}}
\def\FAN{\textup{\textsf{FAN}}}
\def\MUC{\textup{\textsf{MUC}}}
\def\LLPO{{\textup{\textsf{LLPO}}}}
\def\R{{\mathbb{R}}}
\def\({\textup{(}}
\def\){\textup{)}}
\def\st{\textup{st}}
\def\asa{\leftrightarrow}
\def\di{\rightarrow}
\def\eps{\varepsilon}
\def\paai{\Pi_{1}^{0}\textup{-\textsf{TRANS}}}
\def\QFAC{\textup{\textsf{QF-AC}}}
\newcommand{\tup}{\underline} 
\def\HIP{\textup{\textsf{HIP}}}
\def\IVT{\textup{\textsf{IVT}}}
\def\forallst{\forall^{\st}}
\def\existsst{\exists^{\st}}
\def\NCR{\textup{\textsf{NCR}}}
\def\intern{\textup{\textsf{int}}}
\def\ZFC{\textup{\textsf{ZFC}}}
\def\IST{\textup{\textsf{IST}}}
\def\NUC{\textup{\textsf{NUC}}}
\def\NSR{\textup{\textsf{NSR}}}
\def\WMX{\textup{\textsf{WMX}}}
\def\FC{\textup{\textsf{FC}}}
\def\TOF{\textup{\textsf{TOF}}}
\def\ECF{\textup{\textsf{ECF}}}
\def\MPC{\textup{\textsf{MPC}}}
\def\EPA{\textup{\textsf{E-PA}}}
\def\NSC{\textup{\textsf{NSC}}}
\def\NSD{\textup{\textsf{NSD}}}
\def\wat{\textup{\textsf{wat}}}
\def\SCF{\textup{\textsf{SCF}}}
\def\MU{\textup{\textsf{MU}}}
\def\HAC{\textup{\textsf{HAC}}}
\def\INT{\textup{\textsf{int}}}
\numberwithin{equation}{section}
\numberwithin{thm}{section}
\begin{document}
\title{Nonstandard Analysis and Constructivism!}
\author{Sam Sanders}
\address{Munich Center for Mathematical Philosophy, LMU Munich, Germany \& Department of Mathematics, Ghent University}
\email{sasander@me.com}
\keywords{Nonstandard Analysis, constructive mathematics, computational content}
\begin{abstract}
Almost two decades ago, Wattenberg published a paper with the title \emph{Nonstandard Analysis and Constructivism?} in which he speculates on a possible connection between Nonstandard Analysis and constructive mathematics.   We study Wattenberg's work in light of recent research on the aforementioned connection.  On one hand, with only slight modification, some of Wattenberg's theorems in Nonstandard Analysis are seen to yield effective and constructive theorems (not involving Nonstandard Analysis).  On the other hand, we establish the incorrectness of some of Wattenberg's (explicit and implicit) claims regarding the  constructive status of the axioms \emph{Transfer} and \emph{Standard Part} of Nonstandard Analysis.  
\end{abstract}


\maketitle
\thispagestyle{empty}

\section{Introduction}\label{intro}
\noindent
The introduction of Wattenberg's paper \cite{watje} includes the following statement:
\begin{quote}
This is a speculative paper. For some time the author has been struck by an apparent affinity between two rather unlikely areas of mathematics - nonstandard analysis and constructivism. [\dots] 
The purpose of this paper is to investigate these ideas by examining several examples. (\cite{watje}*{p.\ 303})
\end{quote}
In a nutshell, the aim of this paper is to study Wattenberg's results in light of recent results on the computational content of Nonstandard Analysis as in \cites{samGH, samzoo, samzooII, sambon}.

\medskip

First of all, \emph{similar} observations concerning the constructive content of Nonstandard Analysis have been made before, e.g.\ as follows:
\begin{quote}
{It has often been held that nonstandard analysis is highly non-constructive, thus somewhat suspect, depending as it does upon the ultrapower construction to produce a model \textup{[\dots]} On the other hand, nonstandard \emph{praxis} is remarkably constructive; having the extended number set we can proceed with explicit calculations.} (Emphasis in original: \cite{NORSNSA}*{p.\ 31})
\end{quote}
Like-minded statements may be found in \cites{rossenaap, rosse,kifar, kieken, Oss3, Oss2, sc, nsawork2, venice,fath, jep}.  The reader may interpret the word \emph{constructive} as the mainstream/classical notion `effective', or as the foundational notion from Bishop's \emph{Constructive Analysis} (\cite{bish1}).  
As will become clear, both cases will be treated below (and separated carefully).      

\medskip

However, Wattenberg goes further than most of the aforementioned authors by making the following important observation.
\begin{quote}
Despite an essential nonconstructive kernel, many nonstandard arguments are constructive until the final step, a step that frequently involves the standard part map. (\cite{watje}*{p.\ 303})
\end{quote}
This observation is similar to Osswald's \emph{local constructivity}.  In particular, Osswald has qualified the observation from the above quotes as \emph{Nonstandard Analysis is locally constructive}, to be understood as the fact that the mathematics performed in the nonstandard world is highly constructive\footnote{The mathematics performed in the nonstandard world usually amounts to merely manipulating sums and products of nonstandard length.}.  By contrast, the nonstandard axioms (\emph{Transfer} and \emph{Standard Part}) needed to `jump between' the nonstandard world and usual mathematics, are \emph{highly non-constructive} in general.  Osswald discusses local constructivity in \cite{nsawork2}*{\S7}, \cite{Oss3}*{\S1-2}, or \cite{Oss2}*{\S17.5}.  

\medskip

The results in \cites{samGH, samzoo, samzooII, sambon} vindicate the Wattenberg and Osswald view in that computational content is extracted from theorems of `pure' Nonstandard Analysis, i.e.\ formulated solely with the \emph{nonstandard} definitions (of continuity, Riemann integration, compactness, et cetera) rather than the `$\eps$-$\delta$' definitions.  
With this choice, one only works in the nonstandard universe, avoiding the non-constructive step from and to the standard/usual universe (requiring \emph{Transfer} and \emph{Standard Part}). 

\medskip

In this paper, we show that Wattenberg's results from \cite{watje} yield effective and constructive results with only slight modification.  However, we also establish the incorrectness of Wattenberg's claims regarding the constructive status of the nonstandard axioms \emph{Transfer} and \emph{Standard Part}.  
In contrast to Wattenberg, we shall work in Nelson's axiomatic approach to Nonstandard Analysis (See Section \ref{P}), but this change of framework will have no real impact on our results or Wattenberg's.

\section{Internal set theory and its fragments}\label{P}
In this section, we discuss Nelson's \emph{internal set theory}, first introduced in \cite{wownelly}, and its fragments $\P$ and $\H$ from \cite{brie}.  
The latter fragments are essential to our enterprise, especially Theorem~\ref{consresultcor} below.  
\subsection{Internal set theory 101}\label{IIST}
In Nelson's \emph{syntactic} approach to Nonstandard Analysis (\cite{wownelly}), as opposed to Robinson's semantic one (\cite{robinson1}), a new predicate `st($x$)', read as `$x$ is standard' is added to the language of \textsf{ZFC}, the usual foundation of mathematics.  
The notations $(\forall^{\st}x)$ and $(\exists^{\st}y)$ are short for $(\forall x)(\st(x)\di \dots)$ and $(\exists y)(\st(y)\wedge \dots)$.  A formula is called \emph{internal} if it does not involve `st', and \emph{external} otherwise.   
The three external axioms \emph{Idealisation}, \emph{Standard Part}, and \emph{Transfer} govern the new predicate `st'.  These axioms are respectively defined\footnote{The superscript `fin' in \textsf{(I)} means that $x$ is finite, i.e.\ its number of elements are bounded by a natural number.} as:  
\begin{enumerate}
\item[\textsf{(I)}] $(\forall^{\st~\textup{fin}}x)(\exists y)(\forall z\in x)\varphi(z,y)\di (\exists y)(\forall^{\st}x)\varphi(x,y)$, for internal $\varphi$.
\item[\textsf{(S)}] $(\forall^{\st} x)(\exists^{\st}y)(\forall^{\st}z)\big((z\in x\wedge \varphi(z))\asa z\in y\big)$, for any $\varphi$.
\item[\textsf{(T)}] $(\forall^{\st}t)\big[(\forall^{\st}x)\varphi(x, t)\di (\forall x)\varphi(x, t)\big]$, where $\varphi(x,t)$ is internal, and only has free variables $t, x$.  
\end{enumerate}
The system \textsf{IST} is (the internal system) \textsf{ZFC} extended with the aforementioned external axioms;  
The former is a conservative extension of \textsf{ZFC} for the internal language, as proved in \cite{wownelly}.    
Clearly, the extension from $\ZFC$ to $\IST$ can be done for other systems, and we are interested in the formalisations of (classical and intuitionistic) arithmetic, namely \emph{Peano} and \emph{Heyting} arithmetic.  In this regard, the systems $\H$ and $\P$ from \cite{brie}, also sketched in the next sections, are nonstandard extensions of the (internal) logical systems \textsf{E-HA}$^{\omega}$ and $\textsf{E-PA}^{\omega}$, respectively \emph{Heyting and Peano arithmetic in all finite types and the axiom of extensionality}.       
We refer to \cite{kohlenbach3}*{\S3.3} for the exact definitions of the (mainstream in mathematical logic) systems \textsf{E-HA}$^{\omega}$ and $\textsf{E-PA}^{\omega}$.  \
%
%
\subsection{The classical system $\P$}\label{PIPI}
In this section, we introduce the system $\P$, a conservative extension of Peano arithmetic with fragments of Nelson's $\IST$.  

\medskip

To this end, recall that \textsf{E-PA}$^{\omega*}$ and $\textsf{E-HA}^{\omega*}$ are the definitional extensions of \textsf{E-PA}$^{\omega}$ and $\textsf{E-HA}^{\omega}$ with types for finite sequences, as in \cite{brie}*{\S2}.  For the former systems, we require some notation.  
\begin{nota}[Finite sequences]\label{skim}\rm
The systems $\textsf{E-PA}^{\omega*}$ and $\textsf{E-HA}^{\omega*}$ have a dedicated type for `finite sequences of objects of type $\rho$', namely $\rho^{*}$.  Since the usual coding of pairs of numbers goes through in both, we shall not always distinguish between $0$ and $0^{*}$.  
Similarly, we do not always distinguish between `$s^{\rho}$' and `$\langle s^{\rho}\rangle$', where the former is `the object $s$ of type $\rho$', and the latter is `the sequence of type $\rho^{*}$ with only element $s^{\rho}$'.  The empty sequence for the type $\rho^{*}$ is denoted by `$\langle \rangle_{\rho}$', usually with the typing omitted.  Furthermore, we denote by `$|s|=n$' the length of the finite sequence $s^{\rho^{*}}=\langle s_{0}^{\rho},s_{1}^{\rho},\dots,s_{n-1}^{\rho}\rangle$, where $|\langle\rangle|=0$, i.e.\ the empty sequence has length zero.
For sequences $s^{\rho^{*}}, t^{\rho^{*}}$, we denote by `$s*t$' the concatenation of $s$ and $t$, i.e.\ $(s*t)(i)=s(i)$ for $i<|s|$ and $(s*t)(j)=t(|s|-j)$ for $|s|\leq j< |s|+|t|$. For a sequence $s^{\rho^{*}}$, we define $\overline{s}N:=\langle s(0), s(1), \dots,  s(N)\rangle $ for $N^{0}<|s|$.  
For a sequence $\alpha^{0\di \rho}$, we also write $\overline{\alpha}N=\langle \alpha(0), \alpha(1),\dots, \alpha(N)\rangle$ for \emph{any} $N^{0}$.  By way of shorthand, $q^{\rho}\in Q^{\rho^{*}}$ abbreviates $(\exists i<|Q|)(Q(i)=_{\rho}q)$.  Finally, we shall use $\underline{x}, \underline{y},\underline{t}, \dots$ as short for tuples $x_{0}^{\sigma_{0}}, \dots x_{k}^{\sigma_{k}}$ of possibly different type $\sigma_{i}$.          
\end{nota}

We can now introduce $\textsf{E-PA}_{\st}^{\omega*}$.  
We use the same definition as \cite{brie}*{Def.~6.1}, where \textsf{E-PA}$^{\omega*}$ is the definitional extension of \textsf{E-PA}$^{\omega}$ with types for finite sequences from \cite{brie}*{\S2}.  
The set $\T^{*}$ is the collection of all the terms in the language of $\textsf{E-PA}^{\omega*}$.    
\bdefi\label{debs}
The system $ \textsf{E-PA}^{\omega*}_{\st} $ is defined as $ \textsf{E-PA}^{\omega{*}} + \T^{*}_{\st} + \textsf{IA}^{\st}$, where $\T^{*}_{\st}$
consists of the following \emph{basic} axiom schemas.
\begin{enumerate}
\item The schema\footnote{The language of $\textsf{E-PA}_{\st}^{\omega*}$ contains a symbol $\st_{\sigma}$ for each finite type $\sigma$, but the subscript is essentially always omitted.  Hence $\T^{*}_{\st}$ is an \emph{axiom schema} and not an axiom.\label{omit}} $\st(x)\wedge x=y\di\st(y)$,
\item The schema providing for each closed\footnote{A term is called \emph{closed} in \cite{brie} (and in this paper) if all variables are bound via lambda abstraction.  Thus, if $\underline{x}, \underline{y}$ are the only variables occurring in the term $t$, the term $(\lambda \underline{x})(\lambda\underline{y})t(\underline{x}, \underline{y})$ is closed while $(\lambda \underline{x})t(\underline{x}, \underline{y})$ is not.  The second axiom in Definition \ref{debs} thus expresses that $\st_{\tau}\big((\lambda \underline{x})(\lambda\underline{y})t(\underline{x}, \underline{y})\big)$ if $(\lambda \underline{x})(\lambda\underline{y})t(\underline{x}, \underline{y})$ is of type $\tau$.  We usually omit lambda abstraction for brevity.\label{kootsie}} term $t\in \T^{*}$ the axiom $\st(t)$.
\item The schema $\st(f)\wedge \st(x)\di \st(f(x))$.
\end{enumerate}
The external induction axiom \textsf{IA}$^{\st}$ is as follows.  
\be\tag{\textsf{IA}$^{\st}$}
\Phi(0)\wedge(\forall^{\st}n^{0})(\Phi(n) \di\Phi(n+1))\di(\forall^{\st}n^{0})\Phi(n).     
\ee
\edefi
Secondly, we introduce some essential fragments of $\IST$ studied in \cite{brie}.  
\bdefi[External axioms of $\P$]~
\begin{enumerate}
\item$\HAC_{\INT}$: For any internal formula $\varphi$, we have
\be\label{HACINT}
(\forall^{\st}x^{\rho})(\exists^{\st}y^{\tau})\varphi(x, y)\di \big(\exists^{\st}F^{\rho\di \tau^{*}}\big)(\forall^{\st}x^{\rho})(\exists y^{\tau}\in F(x))\varphi(x,y),
\ee
\item $\textsf{I}$: For any internal formula $\varphi$, we have
\[
(\forall^{\st} x^{\sigma^{*}})(\exists y^{\tau} )(\forall z^{\sigma}\in x)\varphi(z,y)\di (\exists y^{\tau})(\forall^{\st} x^{\sigma})\varphi(x,y), 
\]
\item The system $\P$ is $\textsf{E-PA}_{\st}^{\omega*}+\textsf{I}+\HAC_{\INT}$.
\end{enumerate}
\end{defi}
Note that \textsf{I} and $\HAC_{\INT}$ are fragments of Nelson's axioms \emph{Idealisation} and \emph{Standard part}.  
By definition, $F$ in \eqref{HACINT} only provides a \emph{finite sequence} of witnesses to $(\exists^{\st}y)$, explaining its name \emph{Herbrandized Axiom of Choice}.   

\medskip

The system $\P$ is connected to $\textsf{E-PA}^{\omega}$ by Theorem \ref{consresultcor}.    
%
The latter (which is not present in \cite{brie}) expresses that we may obtain effective results as in \eqref{effewachten} from any theorem of Nonstandard Analysis which has the same form as in \eqref{bog}.   
\begin{thm}\label{consresultcor}
If $\Delta_{\intern}$ is a collection of internal formulas and $\psi$ is internal, and
\be\label{bog}
\P + \Delta_{\intern} \vdash (\forall^{\st}\underline{x})(\exists^{\st}\underline{y})\psi(\underline{x},\underline{y}, \underline{a}), 
\ee
then one can extract from the proof a sequence of closed$^{\ref{kootsie}}$ terms $t$ in $\mathcal{T}^{*}$ such that
\be\label{effewachten}
\textup{\textsf{E-PA}}^{\omega*} + \Delta_{\intern} \vdash (\forall \underline{x})(\exists \underline{y}\in t(\underline{x}))\psi(\underline{x},\underline{y},\underline{a}).
\ee
\end{thm}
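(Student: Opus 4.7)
The plan is to invoke the nonstandard Dialectica-style functional interpretation developed in \cite{brie}*{\S5} (call it $S_{\st}$) that underlies the soundness theorem for $\P$ over $\textup{\textsf{E-PA}}^{\omega*}$. Recall that $S_{\st}$ assigns to every formula $\Phi$ in the language of $\P$ a translation of the shape $\Phi^{S_{\st}} \equiv (\exists^{\st}\underline{x})(\forall^{\st}\underline{y})\Phi_{S_{\st}}(\underline{x},\underline{y})$, with $\Phi_{S_{\st}}$ internal, and the associated soundness theorem of \cite{brie} says: from a proof of $\Phi$ in $\P$ one can extract closed terms $\underline{t}\in \T^{*}$ such that $\textup{\textsf{E-PA}}^{\omega*} \vdash (\forall \underline{y})\Phi_{S_{\st}}(\underline{t},\underline{y})$. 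The whole theorem will be a bookkeeping corollary of this soundness once we pin down how the translation behaves on the two specific ingredients in our statement.

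First I would compute the $S_{\st}$-translation of the particular target conclusion. For $\psi$ internal, a direct unfolding of the clauses of $S_{\st}$ for $\forall^{\st}$ and $\exists^{\st}$ (exactly as done in \cite{brie}*{\S5}) yields that $\big[(\forall^{\st}\underline{x})(\exists^{\st}\underline{y})\psi(\underline{x},\underline{y}, \underline{a})\big]^{S_{\st}}$ is equivalent to $(\exists^{\st}T)(\forall \underline{x})(\exists \underline{y}\in T(\underline{x}))\psi(\underline{x},\underline{y},\underline{a})$; the finite sequence $T(\underline{x})$ is precisely the Herbrandized character forced by the presence of $\HAC_{\INT}$ in $\P$. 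Feeding this into the soundness theorem produces closed terms $t\in \T^{*}$ witnessing $T$, and these terms yield exactly the conclusion \eqref{effewachten}.

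Second, I would observe that any internal formula $\varphi$ is essentially its own $S_{\st}$-interpretation: since $\varphi$ contains no occurrence of $\st$, the quantifier blocks in $\varphi^{S_{\st}}$ are empty and $\varphi_{S_{\st}} \equiv \varphi$. Consequently each $\delta\in \Delta_{\intern}$ translates to itself, so the hypotheses $\Delta_{\intern}$ can be carried as such through the whole interpretation and appended to $\textup{\textsf{E-PA}}^{\omega*}$ in the conclusion without disturbing the term extraction. The main obstacle, and really the substantive content, is verifying that the soundness of $S_{\st}$ genuinely absorbs all the nonstandard ingredients of $\P$ — the basic schemas of $\T^{*}_{\st}$, external induction $\textsf{IA}^{\st}$, idealisation $\textsf{I}$, and $\HAC_{\INT}$ — with effectively definable witnessing terms in $\T^{*}$. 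This is exactly the soundness theorem established in \cite{brie}*{\S5--7}, and Theorem \ref{consresultcor} then drops out by running that soundness argument with $\Delta_{\intern}$ carried along on both sides.
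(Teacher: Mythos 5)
Your proposal is correct and follows essentially the same route as the paper, which simply defers to \cite{samzoo}*{\S2}/\cite{sambon}*{\S2}, i.e.\ to the soundness theorem of the nonstandard functional interpretation of \cite{brie} under which the normal form $(\forall^{\st}\underline{x})(\exists^{\st}\underline{y})\psi$ is invariant and internal formulas interpret as themselves. The only cosmetic slip is that for the classical system $\P$ the relevant interpretation is the Shoenfield-type $S_{\st}$ of \cite{brie}*{\S7}, whose translation has the shape $(\forall^{\st}\underline{x})(\exists^{\st}\underline{y})\Phi_{S_{\st}}$ rather than the $(\exists^{\st}\underline{x})(\forall^{\st}\underline{y})$ shape you wrote (which belongs to the $D_{\st}$-interpretation of the intuitionistic system $\H$); this does not affect the argument.
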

\begin{proof}
See e.g.\ \cite{samzoo}*{\S2} or \cite{sambon}*{\S2}.
\end{proof}
For the rest of this paper, the notion `normal form' shall refer to a formula as in \eqref{bog}, i.e.\ of the form $(\forall^{\st}x)(\exists^{\st}y)\varphi(x,y)$ for $\varphi$ internal.  
It is shown in \cites{sambon, samzoo, samzooII} that the scope of Theorem \ref{consresultcor} includes the `Big Five' systems of Reverse Mathematics and the associated `zoo' from \cite{damirzoo}.  
\medskip

Finally, the previous theorems do not really depend on the presence of full Peano arithmetic.  
We shall study the following subsystems.   
\bdefi[Weaker Systems]
\begin{enumerate}
\item Let \textsf{E-PRA}$^{\omega}$ be the system defined in \cite{kohlenbach2}*{\S2} and let \textsf{E-PRA}$^{\omega*}$ 
be its definitional extension with types for finite sequences as in \cite{brie}*{\S2}. 
\item $(\QFAC^{\rho, \tau})$ For every quantifier-free internal formula $\varphi(x,y)$, we have
\be\label{keuze}
(\forall x^{\rho})(\exists y^{\tau})\varphi(x,y) \di (\exists F^{\rho\di \tau})(\forall x^{\rho})\varphi(x,F(x))
\ee
\item The system $\RCAo$ is $\textsf{E-PRA}^{\omega}+\QFAC^{1,0}$.  
\end{enumerate}
\edefi
The system $\RCAo$ is the `base theory of higher-order Reverse Mathematics' as introduced in \cite{kohlenbach2}*{\S2}.  
We permit ourselves a slight abuse of notation by also referring to the system $\textsf{E-PRA}^{\omega*}+\QFAC^{1,0}$ as $\RCAo$.
\begin{cor}\label{consresultcor2}
The previous theorem and corollary go through for $\P$ and $\textsf{\textup{E-PA}}^{\omega*}$ replaced by $\P_{0}\equiv \textsf{\textup{E-PRA}}^{\omega*}+\T_{\st}^{*} +\HAC_{\INT} +\textsf{\textup{I}}+\QFAC^{1,0}$ and $\RCAo$.  
\end{cor}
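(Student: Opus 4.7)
The plan is to verify that the term-extraction argument underlying Theorem \ref{consresultcor} (as carried out in \cite{samzoo}*{\S2} or \cite{sambon}*{\S2}) scales down when the internal base theory is weakened from $\textsf{E-PA}^{\omega*}$ to $\textsf{E-PRA}^{\omega*}$. That argument proceeds via a G\"odel-style functional interpretation tailored to the nonstandard setting, namely the variant of the Dialectica interpretation from \cite{brie}, preceded by a negative translation to move from the classical system $\P$ to its intuitionistic counterpart $\H$. First I would isolate where in this interpretation full Peano induction (as opposed to quantifier-free or bounded induction) is actually invoked, and replace each use of G\"odel's $\T$ by a primitive recursive term wherever possible.

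Next, I would check each axiom of $\P_{0}$ in turn. The basic schemas of $\T_{\st}^{*}$ are purely definitional and require no genuine witness terms. The nonstandard idealisation axiom $\textsf{I}$ and the herbrandised choice $\HAC_{\INT}$ are interpreted in \cite{brie} by simple projection and pairing terms, which live in any reasonable subsystem including $\textsf{E-PRA}^{\omega*}$; so these pose no difficulty. The axiom $\QFAC^{1,0}$ is an internal principle that appears on both sides of the implication in the conclusion, and hence transports trivially through the interpretation. The only genuinely removed ingredient relative to $\P$ is the external induction schema $\textsf{IA}^{\st}$, which is absent from $\P_{0}$; in its place we have only the internal induction available in $\textsf{E-PRA}^{\omega*}$.

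The main obstacle will therefore be to verify that this restricted internal induction suffices to interpret whatever induction the extraction theorem implicitly uses on the external side. Concretely, one traces through the inductive cases in the proof of \cite{samzoo}*{\S2} and confirms that each appeal to induction on the complexity of formulas or on natural-number parameters is realisable by primitive recursion, so that the extracted terms $t$ live in $\T^{*}$ built from the constants of $\textsf{E-PRA}^{\omega*}$. Once this is checked, the internal verification \eqref{effewachten} requires at most quantifier-free choice of type $(1,0)$, which is exactly $\QFAC^{1,0}$ in $\RCAo$, together with primitive recursion; the conclusion then follows. The remaining bookkeeping (closure under substitution, treatment of the parameters $\underline{a}$, and so on) goes through verbatim as in the original proof.
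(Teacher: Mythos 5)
Your proposal is correct and follows essentially the same route as the paper: both amount to re-inspecting the term-extraction proof of \cite{brie}*{Theorem 7.7} and checking that nothing beyond the resources of $\textsf{E-PRA}^{\omega*}$ is used. The paper's own proof is terser and pinpoints the requirement more sharply than your focus on primitive recursion: all that the interpretation really needs from the internal base theory is \textsf{EFA} (i.e.\ $\textsf{I}\Delta_{0}+\textsf{EXP}$), since exponentiation suffices to manipulate the finite sequences of Herbrandized witnesses.
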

\begin{proof}
The proof of \cite{brie}*{Theorem 7.7} goes through for any fragment of \textsf{E-PA}$^{\omega{*}}$ which includes \textsf{EFA}, sometimes also called $\textsf{I}\Delta_{0}+\textsf{EXP}$.  
In particular, the exponential function is (all what is) required to `easily' manipulate finite sequences.    
\end{proof}
We now discuss the \emph{Standard Part} principle $\Omega$\textsf{-CA}, a very practical consequence of the axiom $\HAC_{\INT}$.  
Intuitively speaking, $\Omega$\textsf{-CA} expresses that we can obtain 
the standard part (in casu $G$) of \emph{$\Omega$-invariant} nonstandard objects (in casu $F(\cdot,M)$).   
Note that we write `$N\in \Omega$' as short for $\neg\st(N^{0})$.
\bdefi[$\Omega$-invariance]\label{homega} Let $F^{(\sigma\times  0)\di 0}$ be standard and fix $M^{0}\in \Omega$.  
Then $F(\cdot,M)$ is {\bf $\Omega$-invariant} if   
\be\label{homegainv}
(\forall^{\st} x^{\sigma})(\forall N^{0}\in \Omega)\big[F(x ,M)=_{0}F(x,N) \big].  
\ee
\edefi
\begin{princ}[$\Omega$\textsf{-CA}]\rm Let $F^{(\sigma\times 0)\di 0}$ be standard and fix $M^{0}\in \Omega$.
For every $\Omega$-invariant $F(\cdot,M)$, there is a standard $G^{\sigma\di 0}$ such that
\be\label{homegaca}
(\forall^{\st} x^{\sigma})(\forall N^{0}\in \Omega)\big[G(x)=_{0}F(x,N) \big].  
\ee
\end{princ}
The axiom $\Omega$\textsf{-CA} provides the standard part of a nonstandard object, if the latter is \emph{independent of the choice of nonstandard number} used in its definition.  
The following theorem is not new, but highly instructive in light of Remark \ref{simply}.    
\begin{thm}\label{drifh}
The system $\P_{0}$ proves $\Omega\textup{\textsf{-CA}}$.  
\end{thm}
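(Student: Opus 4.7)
The plan is to proceed in three stages: first, show via underspill that for each standard $x$ the map $K \mapsto F(x,K)$ stabilizes already at some standard index; second, uniformize this in $x$ via $\HAC_{\INT}$; third, extract the required standard $G$ from the resulting finite-sequence-valued functional.

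For the first step, fix standard $x$ and consider the internal formula $\psi(x,K) := (\forall K' \geq K)[F(x,K') = F(x,K)]$. Whenever $K$ is nonstandard, every $K' \geq K$ is again nonstandard, so $\Omega$-invariance yields $F(x,K') = F(x,M) = F(x,K)$; hence $\psi(x,K)$ holds for every nonstandard $K$. Underspill, a standard consequence of \textsf{I} obtained by applying overspill to $\neg\psi$, then provides a standard $K_{0}$ with $\psi(x,K_{0})$. Since $x$ was arbitrary standard, one obtains $(\forall^{\st}x^{\sigma})(\exists^{\st}K^{0})\psi(x,K)$. Applying $\HAC_{\INT}$ to this normal form produces a standard functional $\Psi^{\sigma \di 0^{*}}$ such that for every standard $x$, at least one $K_{0} \in \Psi(x)$ satisfies $\psi(x,K_{0})$; in particular $\Psi(x)$ is nonempty on standard inputs.

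For the third step, define the standard $G^{\sigma \di 0}$ by $G(x) := F(x, \max \Psi(x))$, with the convention $G(x) := 0$ when $\Psi(x)$ is empty. Given standard $x$ and an arbitrary nonstandard $N$, pick any witness $K_{0} \in \Psi(x)$ with $\psi(x,K_{0})$. Since $\Psi$ and $x$ are standard, so is $\Psi(x)$ and hence so is each of its entries; in particular $K_{0}$ is standard, so both $N \geq K_{0}$ and $\max \Psi(x) \geq K_{0}$ hold. The stabilization property $\psi(x,K_{0})$ then forces $F(x,N) = F(x,K_{0}) = F(x, \max \Psi(x)) = G(x)$, which is the desired conclusion.

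The main obstacle is precisely this last step: $\HAC_{\INT}$ supplies only a finite sequence of candidate witnesses, and the condition $\psi(x,K_{0})$ is $\Pi_{1}^{0}$ and hence undecidable, so there is no evident way to pick out ``the right'' $K_{0}$ from $\Psi(x)$. The key observation that sidesteps this is that one need not identify the correct witness at all: once $K$ exceeds any valid stabilization index the value $F(x,K)$ is locked, so taking the maximum of $\Psi(x)$ works uniformly regardless of which entries of $\Psi(x)$ are actually valid. All ingredients (idealisation, $\HAC_{\INT}$, and primitive-recursive manipulation of finite sequences) live in $\P_{0}$, so the entire argument formalises there.
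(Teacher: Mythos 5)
Your proposal is correct and follows essentially the same route as the paper's proof: underspill turns $\Omega$-invariance into a normal form, $\HAC_{\INT}$ produces a finite-sequence-valued witness functional, and the maximum of the candidate witnesses works because the stabilization formula is monotone in the index (exactly the point made in Remark \ref{simply}). The only cosmetic difference is that the paper phrases the stabilization condition symmetrically as $(\forall N,M\geq k)[F(x,M)=F(x,N)]$ rather than via your $\psi(x,K)$.
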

\begin{proof}
Let $F(\cdot,M^{0})$ be $\Omega$-invariant, i.e.\ we have 
\be\label{dorkillllll}
(\forall^{\st} x^{\sigma})(\forall N^{0},M^{0}\in \Omega)\big[F(x ,M)=_{0}F(x,N) \big].  
\ee
By underspill (See Theorem \ref{doppi} below), we immediately obtain that 
\[
(\forall^{\st} x^{\sigma})(\exists^{\st} k^{0})(\forall N^{0},M^{0}\geq k)\big[F(x ,M)=_{0}F(x,N) \big].  
\]
Now apply $\HAC_{\INT}$ to obtain standard $\Phi^{\sigma\di 0^{*}}$ such that
\[
(\forall^{\st} x^{\sigma})(\exists k^{0}\in \Phi(x))(\forall N^{0},M^{0}\geq k)\big[F(x ,M)=_{0}F(x,N) \big].
\]
Next, define $\Psi(x):= \max_{i<|\Phi(x)|}\Phi(x)(i)$ and note that 
\[
(\forall^{\st} x^{\sigma})(\forall N^{0},M^{0}\geq \Psi(x))\big[F(x ,M)=_{0}F(x,N) \big].
\]
Finally, put $G(x):=F(x,\Psi(x))$ and note that $\Omega$\textsf{-CA} follows.
\end{proof}
Finally, we consider the following remark on how $\HAC_{\INT}$ and $\textsf{I}$ are used.  
\begin{rem}[Using $\HAC_{\INT}$ and $\textsf{I}$]\label{simply}\rm
By definition, $\HAC_{\INT}$ produces a functional $F^{\sigma\di \tau^{*}}$ which outputs a \emph{finite sequence} of witnesses.  
However, $\HAC_{\INT}$ provides an actual \emph{witnessing functional} assuming (i) $\tau=0$ in $\HAC_{\INT}$ and (ii) the formula $\varphi$ from $\HAC_{\INT}$ is `sufficiently monotone' as in: 
$(\forall^{\st} x^{\sigma},n^{0},m^{0})\big([n\leq_{0}m \wedge\varphi(x,n)] \di \varphi(x,m)\big)$.    
Indeed, in this case one simply defines $G^{\sigma\di 0}$ by $G(x^{\sigma}):=\max_{i<|F(x)|}F(x)(i)$ which satisfies $(\forall^{\st}x^{\sigma})\varphi(x, G(x))$, as was done in the proof of Theorem \ref{drifh}.  
To save space in proofs, we will sometimes skip the (obvious) step involving the maximum of finite sequences, when applying $\HAC_{\INT}$.  
We assume the same convention for terms obtained from Theorem \ref{consresultcor}, and applications of the contraposition of \emph{Idealisation} \textsf{I}.  
\end{rem}

\subsection{The constructive system $\H$}
In this section, we define the system $\H$, the constructive counterpart of $\P$. 
The system $\textsf{H}$ was first introduced in \cite{brie}*{\S5.2}, and constitutes a conservative extension of Heyting arithmetic $\textup{\textsf{E-HA}}^{\omega} $ by \cite{brie}*{Cor.\ 5.6}.

\medskip

Similar to Definition \ref{debs}, we define $ \textsf{E-HA}^{\omega*}_{\st} $ as $ \textsf{E-HA}^{\omega{*}} + \T^{*}_{\st} + \textsf{IA}^{\st}$, where $\textsf{E-HA}^{\omega*}$ is essentially just $\textsf{E-PA}^{\omega*}$ without the law of excluded middle.  
Furthermore, define
\[
\H\equiv \textup{\textsf{E-HA}}^{\omega*}_{\st}+\HAC + {\I}+\NCR+\textsf{HIP}_{\forall^{\st}}+\textsf{HGMP}^{\st},
\]
where $\HAC$ is $\HAC_{\INT}$ without any restriction on the formula, and where the remaining axioms are defined in the following definition.
\bdefi[Three axioms of $\H$]\label{flah}~
\begin{enumerate}\rm
\item $\textsf{HIP}_{\forall^{\st}}$
\[
[(\forall^{\st}x)\phi(x)\di (\exists^{\st}y)\Psi(y)]\di (\exists^{\st}y')[(\forall^{\st}x)\phi(x)\di (\exists y\in y')\Psi(y)],
\]
where $\Psi(y)$ is any formula and $\phi(x)$ is an internal formula of \textsf{E-HA}$^{\omega*}$. 
\item $\textsf{HGMP}^{\st}$
\[
[(\forall^{\st}x)\phi(x)\di \psi] \di (\exists^{\st}x')[(\forall x\in x')\phi(x)\di \psi] 
\]
where $\phi(x)$ and $\psi$ are internal formulas in the language of \textsf{E-HA}$^{\omega*}$.
\item \textsf{NCR}
\[
(\forall y^{\tau})(\exists^{\st} x^{\rho} )\Phi(x, y) \di (\exists^{\st} x^{\rho^{*}})(\forall y^{\tau})(\exists x'\in x )\Phi(x', y),
\]
where $\Phi$ is any formula of \textsf{E-HA}$^{\omega*}$
\end{enumerate}
\edefi
Intuitively speaking, the first two axioms of Definition \ref{flah} allow us to perform a number of \emph{non-constructive operations} (namely \emph{Markov's principle} and \emph{independence of premises}) 
on the standard objects of the system $\H$, provided we introduce a `Herbrandisation' as in the consequent of $\HAC$, i.e.\ a finite list of possible witnesses rather than one single witness. 
Furthermore, while $\H$ includes \emph{Idealisation} \textsf{I}, one often uses the latter's \emph{classical contraposition}, explaining why \textsf{NCR} is useful (and even essential) in the context of intuitionistic logic.  

\medskip

Surprisingly, the axioms from Definition \ref{flah} are exactly what is needed to convert nonstandard definitions (of continuity, integrability, convergence, et cetera) into the normal form $(\forall^{\st}x)(\exists^{\st}y)\varphi(x, y)$ for internal $\varphi$, as done in \cite{sambon}*{\S3.1}.
The latter normal form plays an equally important role in the constructive case as in the classical case by the following theorem.  
\begin{thm}\label{consresult2}
If $\Delta_{\intern}$ is a collection of internal formulas, $\varphi$ is internal, and
\be\label{antecedn3}
\textup{\textsf{H}} + \Delta_{\intern} \vdash \forallst \tup x \, \existsst \tup y \, \varphi(\tup x, \tup y, \tup a), 
\ee
then one can extract from the proof a sequence of closed terms $t$ in $\mathcal{T}^{*}$ such that
\be\label{consequalty3}
\textup{\textsf{E-HA}}^{\omega*} + \Delta_{\intern} \vdash\  \forall \tup x \, \exists \tup y\in \tup t(\tup x)\ \varphi(\tup x,\tup y, \tup a).
\ee
\end{thm}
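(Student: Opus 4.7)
The plan is to mimic the proof of Theorem~\ref{consresultcor}, but based on the Herbrandized functional interpretation for $\H$ developed in \cite{brie}*{\S5} rather than on its classical counterpart. Concretely, van den Berg--Briseid--Safarik define an interpretation $\Phi\mapsto \Phi^{St}$ assigning to each formula $\Phi$ of the language of $\H$ a formula of the form $\exists^{\st}\underline{x}\,\forall^{\st}\underline{y}\,\Phi_{St}(\underline{x},\underline{y})$ with $\Phi_{St}$ internal, and they show a soundness theorem to the effect that if $\H\vdash \Phi(\underline{a})$ then one extracts closed terms $\underline{t}\in\T^{*}$ with $\textsf{E-HA}^{\omega*}\vdash \forall \underline{y}\,\Phi_{St}(\underline{t}(\underline{a}),\underline{y},\underline{a})$. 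The axioms $\HAC$, $\I$, $\NCR$, $\textsf{HIP}_{\forall^{\st}}$, $\textsf{HGMP}^{\st}$ are tailor-made so that each of them receives a trivial interpretation, which is precisely why $\H$ admits this sort of term extraction.

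The first step is to compute the interpretation of the particular normal form $\forallst\underline{x}\existsst\underline{y}\,\varphi(\underline{x},\underline{y},\underline{a})$. Since $\varphi$ is internal, its own interpretation $\varphi_{St}$ is just $\varphi$ itself (no quantifiers need to be introduced). Unpacking the clauses of the interpretation for $\forall^{\st}$ and $\exists^{\st}$ as in \cite{brie}, one verifies that the formula $\forallst\underline{x}\existsst\underline{y}\,\varphi(\underline{x},\underline{y},\underline{a})$ is interpreted, modulo prenexing, by something of the form
\[
\existsst \underline{F}\,\forall \underline{x}\,\exists \underline{y}\in \underline{F}(\underline{x})\,\varphi(\underline{x},\underline{y},\underline{a}),
\]
i.e.\ a Herbrandized choice producing a finite sequence of candidate witnesses. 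This is exactly the shape of the conclusion \eqref{consequalty3} once the existential witnesses for $\underline{F}$ are replaced by the closed terms supplied by soundness.

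Next I would handle the parameter $\Delta_{\intern}$. Since every formula in $\Delta_{\intern}$ is internal, its interpretation is trivial, and one checks as in \cite{brie}*{Thm.~5.5} that the soundness theorem extends to proofs from internal hypotheses: any use of $\psi\in\Delta_{\intern}$ in the proof can be carried through the interpretation as a use of the same $\psi$ on the side of $\textsf{E-HA}^{\omega*}$. Putting everything together, applying soundness to the assumed derivation \eqref{antecedn3} yields closed terms $\underline{t}\in\T^{*}$ such that $\textsf{E-HA}^{\omega*}+\Delta_{\intern}\vdash \forall \underline{x}\,\exists \underline{y}\in \underline{t}(\underline{x})\,\varphi(\underline{x},\underline{y},\underline{a})$, as desired.

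The main obstacle, and the only genuinely nontrivial matter, is the verification that the soundness theorem of \cite{brie}*{Thm.~5.5} indeed goes through with internal side assumptions $\Delta_{\intern}$ and yields the Herbrandized witness in the exact shape of \eqref{consequalty3}. In practice this is already essentially contained in \cite{brie}, and variants have been worked out in the author's own papers \cite{sambon}*{\S2} and \cite{samzoo}*{\S2}; so, analogously to the proof of Theorem~\ref{consresultcor}, I would be content to cite these references rather than redo the bookkeeping of the interpretation case by case.
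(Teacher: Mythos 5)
Your proposal is correct and follows essentially the same route as the paper: the paper's proof simply invokes the soundness of the Herbrandized functional interpretation of $\H$ from van den Berg--Briseid--Safarik (citing \cite{brie}*{Theorem 5.9}), together with the observation that the normal form $\forallst\tup x\,\existsst\tup y\,\varphi$ is invariant under that interpretation and that internal formulas (hence the members of $\Delta_{\intern}$) are interpreted trivially. Your more detailed unpacking of the interpretation of the normal form and of the side assumptions is exactly the bookkeeping the paper leaves to the cited reference.
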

\begin{proof}
Immediate by \cite{brie}*{Theorem 5.9}.  Note that in the latter, just like in the proof of Corollary \ref{consresultcor}, $\forallst \tup x \, \existsst \tup y \, \varphi(\tup x, \tup y, \tup a) $ is proved to be `invariant' under a suitable syntactic translation.  
\end{proof}
Finally, we point out some very useful principles to which we have access.  
\begin{thm}\label{doppi}
The systems $\P$, $\P_{0}$, and $\H$ prove \emph{overspill} and \emph{underspill}, i.e.\
\[
(\forall^{\st}x^{\rho})\varphi(x)\di (\exists y^{\rho})\big[\neg\st(y)\wedge \varphi(y)  \big] \textup{ and }(\forall x^{\rho})\big[\neg\st(x)\di \varphi(x)]\di (\exists^{\st} y^{\rho})\varphi(y),
\]
for any internal formula $\varphi$.
\end{thm}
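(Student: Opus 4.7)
The plan is to derive overspill from \textsf{I} (Idealisation), and then obtain underspill either by classical contraposition over $\P$ and $\P_0$, or, over $\H$, by suitably invoking the intuitionistic axioms $\textsf{HGMP}^{\st}$ and \textsf{NCR} in place of excluded middle.

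For overspill, assume $(\forall^{\st}x^\rho)\varphi(x)$ and apply \textsf{I} with $\sigma = \tau = \rho$ to the internal formula $\chi(z^\rho, y^\rho) \equiv \varphi(y) \wedge y \neq z$ (for $\rho = 0$ the slightly cleaner choice $\chi(z, y) \equiv z \leq y \wedge \varphi(y)$ also works). To verify the antecedent of \textsf{I}, given a standard finite sequence $x^{\rho^{*}}$, I produce $y^\rho$ witnessing $(\exists y)(\forall z \in x)\chi(z,y)$ by picking any standard element $y$ not appearing in $x$: for $\rho = 0$ take $y := 1 + \max_{i<|x|}x(i)$, and for higher types use the standard constant functions $\lambda \underline{u}.n$ for $n = 0, 1, 2, \dots$ to locate a standard element outside the finite set $\{x(0),\dots,x(|x|-1)\}$. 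The hypothesis $(\forall^{\st}x)\varphi(x)$ then yields $\varphi(y)$ automatically, and $y \neq z$ for all $z \in x$ holds by construction, so \textsf{I} is applicable. Its consequent delivers $y^\rho$ with $\varphi(y)$ and $(\forall^{\st}z^\rho)(y \neq z)$, so $y$ is the required nonstandard witness.

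For underspill over the classical systems $\P$ and $\P_0$, plain contraposition suffices: assume $(\forall x^\rho)[\neg\st(x) \di \varphi(x)]$ and suppose toward contradiction that $(\forall^{\st}y^\rho)\neg\varphi(y)$; applying overspill (just established) to the internal formula $\neg\varphi$ produces a nonstandard $y$ with $\neg\varphi(y)$, contradicting the hypothesis instantiated at this $y$. Over the intuitionistic system $\H$ this contrapositive manoeuvre is unavailable, so one instead combines \textsf{I} with $\textsf{HGMP}^{\st}$ (a Herbrandised Markov principle tailored to external quantifiers) and \textsf{NCR} (precisely the constructive realiser of the classical contrapositive of \textsf{I}) to bring the hypothesis into an internal normal form and then extract a standard witness from a finite list of candidates produced by the Herbrandisation.

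The main obstacle is clearly this last step: over $\P$ and $\P_0$ everything reduces immediately to \textsf{I}, but over $\H$ one must carefully massage the non-internal antecedent $\neg\st(x) \di \varphi(x)$ into a shape amenable to $\textsf{HGMP}^{\st}$ and \textsf{NCR} without invoking excluded middle. A secondary, minor obstacle in the overspill argument is the combinatorial task, at higher types, of exhibiting a standard element lying outside a given finite sequence, which needs a standard enumeration of infinitely many pairwise distinct elements of the type in question; the recipe sketched above (ascending constants, then recursion on the type) handles this uniformly.
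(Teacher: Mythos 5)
Your overspill argument is correct and, as far as the paper is concerned, is the whole story: the paper's own ``proof'' is just the citation \cite{brie}*{Prop.\ 3.3 and 5.11}, and what you do --- apply \textsf{I} to the internal formula $\varphi(y)\wedge y\neq_{\rho}z$, discharge its antecedent by exhibiting a \emph{standard} element outside the given standard finite sequence, and read $\neg\st(y)$ off from $(\forall^{\st}z)(y\neq z)$ via reflexivity of $=_{\rho}$ --- is exactly how that reference proceeds. The contraposition argument for underspill over the classical systems $\P$ and $\P_{0}$ is likewise fine. One small caveat at higher types: ``locating'' one of the constants $\lambda\underline{u}.n$ outside the list by pigeonhole presupposes decidability of $=_{\rho}$, which is unavailable over $\H$; there one should instead take $y:=\lambda\underline{u}.\big(1+\max_{i<|x|}x(i)(\underline{0})\big)$, which provably differs from each $x(i)$ because it exceeds it at the fixed standard argument tuple $\underline{0}$ and $\neq_{0}$ is decidable.

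The genuine gap is underspill over $\H$, which you flag as ``the main obstacle'' but do not actually resolve: naming $\textsf{HGMP}^{\st}$ and $\NCR$ is not yet a proof, because the antecedent $\neg\st(x)\di\varphi(x)$ is not, as written, in the input format of either axiom. The missing idea is the equivalence $\neg\st(x)\asa(\forall^{\st}z)(z\neq x)$, provable in $\H$ at every type: left to right, from $\st(z)$ and $z=x$ the basic axiom $\st(z)\wedge z=x\di\st(x)$ contradicts $\neg\st(x)$, so $z\neq x$; right to left, instantiate $z:=x$. Only after this rewriting does the hypothesis read $(\forall x)\big[(\forall^{\st}z)(z\neq x)\di\varphi(x)\big]$, whose matrix has exactly the shape $[(\forall^{\st}z)\phi(z)\di\psi]$ with $\phi,\psi$ internal that $\textsf{HGMP}^{\st}$ requires. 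That axiom then yields $(\forall x)(\exists^{\st}w)\big[(\forall z\in w)(z\neq x)\di\varphi(x)\big]$, $\NCR$ collects the witnesses $w$ into a single standard finite sequence $v$, and choosing a standard $x_{0}$ outside $v$ (by the same construction as in your overspill step) gives $\varphi(x_{0})$ with $\st(x_{0})$. Without the internal characterisation of $\neg\st$ your sketch cannot be completed into a derivation in $\H$.
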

\begin{proof}
Immediate by \cite{brie}*{Prop.\ 3.3 and 5.11}.  
\end{proof}
We will apply underspill most frequently as follows:  From $(\forall M\in \Omega)\psi(M)$ for internal $\psi$, we conclude that $(\forall K^{0})\big[\neg\st(K)\di (\forall M\geq K)\psi(M) \big]$.  Applying underspill for $\varphi(K)\equiv (\forall M\geq K)\psi(M)$, we obtain $(\exists^{\st} K^{0}) (\forall M\geq K)\psi(M)$.   

\medskip

In conclusion, we have introduced the systems $\H$, $\P$, which are conservative extensions of Peano and Heyting arithmetic with fragments of Nelson's internal set theory.  
We have observed that central to the conservation result in Theorem~\ref{consresultcor} is the normal form $(\forall^{\st}x)(\exists^{\st}y)\varphi(x, y)$ for internal $\varphi$.  

\subsection{Notations}
In this section, we introduce notations relating to $\H$ and $\P$.  

\medskip

First of all, we mostly use the same notations as in \cite{brie}.  
\begin{rem}[Notations]\label{notawin}\rm
We write $(\forall^{\st}x^{\tau})\Phi(x^{\tau})$ and $(\exists^{\st}x^{\sigma})\Psi(x^{\sigma})$ as short for 
$(\forall x^{\tau})\big[\st(x^{\tau})\di \Phi(x^{\tau})\big]$ and $(\exists x^{\sigma})\big[\st(x^{\sigma})\wedge \Psi(x^{\sigma})\big]$.     
We also write $(\forall x^{0}\in \Omega)\Phi(x^{0})$ and $(\exists x^{0}\in \Omega)\Psi(x^{0})$ as short for 
$(\forall x^{0})\big[\neg\st(x^{0})\di \Phi(x^{0})\big]$ and $(\exists x^{0})\big[\neg\st(x^{0})\wedge \Psi(x^{0})\big]$.  Furthermore, $\neg\st(x^{0})$, is abbreviated by `$x^{0}\in \Omega$'.  
A formula $A$ is `internal' if it does not involve $\st$, and `external' otherwise.  The formula $A^{\st}$ is defined from $A$ by appending `st' to all quantifiers (except bounded number quantifiers).    
\end{rem}
Secondly, we will use the usual notations for natural, rational and real numbers and functions as introduced in \cite{kohlenbach2}*{p.\ 288-289}. 
We only list the definition of real number and related notions in $\P$ and related systems.
\begin{defi}[Real numbers and related notions in $\P$]\label{keepintireal}\rm~
\begin{enumerate}
\item A (standard) real number $x$ is a (standard) fast-converging Cauchy sequence $q_{(\cdot)}^{1}$, i.e.\ $(\forall n^{0}, i^{0})(|q_{n}-q_{n+i})|<_{0} \frac{1}{2^{n}})$.  
We use Kohlenbach's `hat function' from \cite{kohlenbach2}*{p.\ 289} to guarantee that every sequence $f^{1}$ is a real.  
\item We write $[x](k):=q_{k}$ for the $k$-th approximation of a real $x^{1}=(q^{1}_{(\cdot)})$.    
\item Two reals $x, y$ represented by $q_{(\cdot)}$ and $r_{(\cdot)}$ are \emph{equal}, denoted `$x=_{\R}y$', if $(\forall n)(|q_{n}-r_{n}|\leq \frac{1}{2^{n-1}})$. Inequality `$<_{\R}$' is defined similarly.         
\item We  write `$x\approx y$' if $(\forall^{\st} n)(|q_{n}-r_{n}|\leq \frac{1}{2^{n-1}})$ and $x\gg y$ if $x>y\wedge x\not\approx y$.  
\item Functions $F:\R\di \R$ mapping reals to reals are represented by functionals $\Phi^{1\di 1}$ mapping equal reals to equal reals, i.e. 
\be\tag{\textsf{RE}}\label{furg}
(\forall x, y)(x=_{\R}y\di \Phi(x)=_{\R}\Phi(y)).
\ee
 \item For a space $X$ with metric $|\cdot|_{X}:X\di \R$, we write `$x\approx y$' for `$|x-y|_{X}\approx 0$'.    
\item Sets of objects of type $\rho$ are denoted $X^{\rho\di 0}, Y^{\rho\di }, Z^{\rho\di 0}, \dots$ and are given by their characteristic functions $f^{\rho\di 0}_{X}$, i.e.\ $(\forall x^{\rho})[x\in X\asa f_{X}(x)=_{0}1]$, where $f_{X}^{\rho\di 0}$ is assumed to output zero or one.  
\end{enumerate}
\end{defi}
%
%
Thirdly, we use the usual extensional notion of equality. 
\begin{rem}[Equality]\label{equ}\rm
All the above systems include equality between natural numbers `$=_{0}$' as a primitive.  Equality `$=_{\tau}$' for type $\tau$-objects $x,y$ is defined as:
\be\label{aparth}
[x=_{\tau}y] \equiv (\forall z_{1}^{\tau_{1}}\dots z_{k}^{\tau_{k}})[xz_{1}\dots z_{k}=_{0}yz_{1}\dots z_{k}]
\ee
if the type $\tau$ is composed as $\tau\equiv(\tau_{1}\di \dots\di \tau_{k}\di 0)$.  Inequality `$\leq_{\tau}$' is then just \eqref{aparth} with `$=_{0}$' replaced by `$\leq_{0}$'.  
In the spirit of Nonstandard Analysis, we define `approximate equality $\approx_{\tau}$' as follows:
\be\label{aparth2}
[x\approx_{\tau}y] \equiv (\forall^{\st} z_{1}^{\tau_{1}}\dots z_{k}^{\tau_{k}})[xz_{1}\dots z_{k}=_{0}yz_{1}\dots z_{k}]
\ee
with the type $\tau$ as above.  
All the above systems include the \emph{axiom of extensionality}:
\be\label{EXT}\tag{\textsf{E}}  
(\forall  x^{\rho},y^{\rho},\varphi^{\rho\di \tau}) \big[x=_{\rho} y \di \varphi(x)=_{\tau}\varphi(y)   \big].
\ee
However, as noted in \cite{brie}*{p.\ 1973}, the so-called axiom of \emph{standard} extensionality \eqref{EXT}$^{\st}$ is problematic and cannot be included in our systems.  
We use $\eqref{EXT}_{n+2}$ to refer to $\eqref{EXT}$ restricted to type $n+2$ functionals $\varphi$.  
\end{rem}

\subsection{Preliminaries}\label{preint}
In this section, we introduce the usual defintions of continuity, as well as some fragments of \emph{Transfer} and \emph{Standard Part}, their normal forms, and the functionals arising from term extraction as in Theorem~\ref{consresultcor}.  
\subsubsection{Continuity, nonstandard and otherwise}
\bdefi[Continuity]\label{Kont}
A function $f$ is \emph{continuous} on $[0,1]$ if
\be\label{soareyou1}\textstyle
(\forall k^{0})(\forall x\in [0,1])(\exists N^{0})\big[(\forall y\in [0,1])(|x-y|<_{\R}\frac{1}{N}\di |f(x)-f(y)|<_{\R}\frac{1}{k})\big].
\ee
A function $f$ is \emph{nonstandard continuous} on $[0,1]$ if
\be\label{soareyou3}
(\forall^{\st}x\in [0,1])(\forall y\in [0,1])[x\approx y \di f(x)\approx f(y)].
\ee
A function $f$ is \emph{uniformly continuous} on $[0,1]$ if
\be\label{soareyou2}\textstyle
(\forall k^{0})(\exists N^{0})\big[(\forall x, y\in [0,1])(|x-y|<_{\R}\frac{1}{N}\di |f(x)-f(y)|<_{\R}\frac{1}{k})\big].
\ee
A function $f$ is \emph{nonstandard uniformly continuous} on $[0,1]$ if
\be\label{soareyou4}
(\forall x, y\in [0,1])[x\approx y \di f(x)\approx f(y)].
\ee
\edefi
\bdefi[Modulus of continuity]\label{mikeh}~
\begin{enumerate}
\item A function $g$ which provides $N$ as in \eqref{soareyou1} (resp.\ \eqref{soareyou2}) for any $k^{0}$ and $ x^{1}\in [0,1]$ (resp.\ for $k^{0}$) is a \emph{modulus} of pointwise (resp.\ uniform) continuity.  
\item We write $f\in C([0,1])$ (resp.\ $f\in C^{\st}([0,1])$) for \eqref{soareyou1} (resp.\ \eqref{soareyou1}$^{\st}$).  
\item The principle $\NSC_{1}$ (resp.\ $\NSC_{2}$) is the statement that every standard $f\in C([0,1])$ (resp.\ standard $f\in C^{\st}([0,1])$) is also nonstandard continuous.  
\item The principle $\MPC(\Xi)$ is the statement that $\Xi(f)$ is a modulus of pointwise continuity for every $f\in C([0,1])$.  
\end{enumerate}
\edefi
As will become clear in Section \ref{good}, there is an intimate connection between nonstandard continuity \eqref{soareyou3} and a `modulus-of-continuity functional' $\Xi$ as in $\MPC(\Xi)$.  
\subsubsection{Transfer and comprehension}
We require two equivalent (See \cite{kohlenbach2}*{Prop.\ 3.9}) versions of arithmetical comprehension: 
\be\label{mu}\tag{$\mu^{2}$}
(\exists \mu^{2})\big[(\forall f^{1})( (\exists n)f(n)=0 \di f(\mu(f))=0)    \big],
\ee
\be\label{mukio}\tag{$\exists^{2}$}
(\exists \varphi^{2})\big[(\forall f^{1})( (\exists n)f(n)=0 \asa \varphi(f)=0    \big],
\ee
and also the restriction of Nelson's axiom \emph{Transfer} as follows:
\be\tag{$\paai$}
(\forall^{\st}f^{1})\big[(\forall^{\st}n^{0})f(n)\ne0\di (\forall m)f(m)\ne0\big].
\ee
Denote by $\textsf{MU}(\mu)$ the formula in square brackets in \eqref{mu}.  By the following theorems, $\paai$ is fundamentally non-constructive and closely related to arithmetical comprehension as given by $(\mu^{2})$.  
\begin{thm}\label{klabaka}
The system $\P$ proves $(\exists^{\st}\mu^{2})\MU(\mu)\di \paai \di (\mu^{2})^{\st}$.  
\end{thm}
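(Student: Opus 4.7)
The statement consists of two implications; I would prove them separately, both inside $\P$.

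The first implication $(\exists^{\st}\mu^{2})\MU(\mu) \to \paai$ is a quick idealisation-free argument. Fix a standard $\mu^{2}$ satisfying $\MU(\mu)$, and a standard $f^{1}$ for which $(\forall^{\st}n^{0})f(n)\neq 0$. Suppose towards a classical contradiction that $(\exists m^{0})f(m)=0$. Then $\MU(\mu)$ immediately gives $f(\mu(f))=0$. However, since both $\mu$ and $f$ are standard, axiom (3) of $\T^{*}_{\st}$ (standardness is preserved under application) tells us that $\mu(f)$ is a standard natural number, contradicting $(\forall^{\st}n^{0})f(n)\neq 0$. Hence $(\forall m)f(m)\neq 0$, as required by $\paai$.

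The second implication $\paai \to (\mu^{2})^{\st}$ is the substantive one. Starting from $\paai$, I would first form its classical contrapositive to obtain
\[
(\forall^{\st}f^{1})\bigl[(\exists m^{0})f(m)=0 \to (\exists^{\st}n^{0})f(n)=0\bigr].
\]
Since $\P$ is classical, I can perform a case split on the antecedent $(\exists m)f(m)=0$ (using the trivial witness $n=0$ when $(\forall m)f(m)\neq 0$) to pull the standard existential to the front and reach the normal form
\[
(\forall^{\st}f^{1})(\exists^{\st}n^{0})\,\varphi(f,n), \qquad \varphi(f,n)\equiv\bigl[(\exists m)f(m)=0 \to f(n)=0\bigr],
\]
with $\varphi$ internal. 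Apply $\HAC_{\INT}$ to extract a standard $F^{1\to 0^{*}}$ such that $(\forall^{\st}f)(\exists n\in F(f))\,\varphi(f,n)$. Finally, I would define a standard $\mu^{2}$ from $F$ via bounded search: let $\mu(f)$ be the least $n\leq \max_{i<|F(f)|}F(f)(i)$ with $f(n)=0$ if any such $n$ exists within that bound, and $0$ otherwise. Because $\mu$ arises from $F$ by a closed term of $\mathcal{T}^{*}$, axioms (2) and (3) of $\T^{*}_{\st}$ guarantee that $\mu$ is standard. To verify $\MU^{\st}(\mu)$, fix standard $f$ with a standard $n_{0}$ satisfying $f(n_{0})=0$; then a fortiori $(\exists m)f(m)=0$, so $\varphi(f,n)$ applied to the elements of $F(f)$ guarantees a zero of $f$ within the search bound, whence $f(\mu(f))=0$.

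The only delicate step is the normal-form rewriting in the second implication, where the classical quantifier manipulation on $(\exists m)f(m)=0 \to (\exists^{\st}n)f(n)=0$ is essential; this is precisely where the constructive counterpart $\H$ would instead need $\textsf{HIP}_{\forall^{\st}}$ or $\textsf{HGMP}^{\st}$. Everything else is a routine application of $\HAC_{\INT}$ together with the convention (Remark \ref{simply}) that sequences of witnesses can be collapsed to a single witness by bounded search, and termhood of the resulting $\mu$.
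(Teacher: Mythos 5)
Your proposal is correct and follows essentially the same route as the paper: the first implication via preservation of standardness under application of standard functionals, and the second via contraposition of $\paai$ to a normal form, an application of $\HAC_{\INT}$, and collapsing the finite list of witnesses by a bounded search (the paper packages the last step by choosing a matrix monotone in the witness, per Remark \ref{simply}, but this is the same device). No gaps.
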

\begin{proof}
The first implication is immediate as standard functionals produce standard output on standard input by the third basic axiom in Definition \ref{debs}.  In particular, $\mu(f)$ is standard for standard $f^{1}$ if $\mu^{2}$ is standard.  For the second implication, note that $\paai$ implies by contraposition that:
\be\label{toggg}
(\forall^{\st}f^{1})(\exists^{\st}m)\big[(\exists n^{0})f(n)=0\di (\exists i\leq m)f(i)=0\big].
\ee
Applying $\HAC_{\INT}$ while bearing in mind Remark \ref{simply}, we obtain standard $\Phi$ s.t.\ 
\be\label{geluklkt}
(\forall^{\st}f^{1})\big[(\exists n^{0})f(n)=0\di (\exists i\leq \Phi(f))f(i)=0\big], 
\ee
which immediately yields $(\mu^{2})^{\st}$, and we are done. 
\end{proof}

\begin{thm}\label{idare2}
From the proof that $\P_{0}\vdash[ \NSC_{1}\di \paai]$, a term $t$ can be extracted such that $\RCAo\vdash (\forall \Xi^{3})\big(\MPC(\Xi)\di \MU(t(\Xi)))$
\end{thm}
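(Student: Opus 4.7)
The plan is to apply the term-extraction result of Corollary \ref{consresultcor2} to the given proof of $\P_{0}\vdash \NSC_{1}\to \paai$, after rewriting both sides into $(\forall^{\st})(\exists^{\st})$-internal normal form and combining the resulting implication into a single such normal form. The key observation is that the Skolem witnesses extracted from the antecedent $\NSC_{1}$ are precisely the modulus functionals appearing in $\MPC$, while those extracted from the consequent $\paai$ are the functionals appearing in $\MU$.

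First, I would put $\NSC_{1}$ into normal form. Unfolding the nonstandard continuity clause \eqref{soareyou3} (by expanding $x\approx y$ and $f(x)\approx f(y)$ as in Definition \ref{keepintireal}) and pulling the $\st$-quantifiers outwards via classical logic, one obtains an equivalent statement $(\forall^{\st} f^{1}, x^{1}, k^{0})(\exists^{\st} N^{0})\, C(f, x, k, N)$, where $C(f,x,k,N)$ is the internal formula ``$f\in C([0,1])\wedge x\in [0,1]\to (\forall y\in [0,1])(|x-y|<1/N\to |f(x)-f(y)|<1/k)$'', which is monotone in $N$. Analogously, as in the proof of Theorem \ref{klabaka} via \eqref{toggg}--\eqref{geluklkt}, $\paai$ is equivalent to the normal form $(\forall^{\st} g^{1})(\exists^{\st} m^{0})\, B(g, m)$ with $B(g, m)\equiv [(\exists n)g(n)=0 \to (\exists i\leq m)g(i)=0]$, also monotone in $m$.

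Second, I would combine $\NSC_{1}\to \paai$ into a single normal form. Applying $\HAC_{\INT}$ (and taking the max via monotonicity as in Remark \ref{simply}) to the antecedent yields its equivalence with $(\exists^{\st}\Xi^{3})(\forall^{\st} f, x, k)\, C(f, x, k, \Xi(f)(x,k))$, whose internal part is precisely $\MPC(\Xi)$. Classical propositional manipulations then rewrite the implication as
\[
(\forall^{\st}\Xi^{3})(\forall^{\st} g^{1})(\exists^{\st} m^{0}, f^{1}, x^{1}, k^{0})\,\big[\, C(f,x,k,\Xi(f)(x,k))\to B(g,m)\,\big],
\]
a genuine $(\forall^{\st})(\exists^{\st})$-internal normal form. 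Applying Corollary \ref{consresultcor2} yields a closed term $s\in \T^{*}$ with
\[
\RCAo\vdash(\forall \Xi^{3}, g^{1})(\exists (m, f, x, k)\in s(\Xi, g))\,\big[\, C(f,x,k,\Xi(f)(x,k))\to B(g,m)\,\big].
\]

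Finally, setting $t(\Xi)(g):=\max\{m : (m, f, x, k)\in s(\Xi, g)\}$, a primitive recursive operation on the finite sequence $s(\Xi,g)$, I would verify in $\RCAo$ that if $\MPC(\Xi)$ holds then $C(f,x,k,\Xi(f)(x,k))$ is true for every $(f,x,k)$, so the existential tuple $(m_{0}, f_{0}, x_{0}, k_{0})\in s(\Xi, g)$ yields $B(g, m_{0})$; since $t(\Xi)(g)\geq m_{0}$ by construction, monotonicity of $B$ in $m$ gives $B(g, t(\Xi)(g))$, which is $\MU(t(\Xi))$. The main technical hurdle is the first step: one must confirm that the normal-form rewriting of $\NSC_{1}$ produces a Skolem functional of exactly the shape demanded by $\MPC$, with the correct typing $f^{1}, x^{1}, k^{0}\to N^{0}$ of Definition \ref{mikeh}; once this correspondence is established, the remaining steps are routine applications of $\HAC_{\INT}$, classical logic and term extraction.
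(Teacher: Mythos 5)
Your plan reproduces the paper's own proof (given for the identical Corollary \ref{idare}) essentially step for step: normalize $\paai$ as in \eqref{geluklkt}, normalize nonstandard pointwise continuity, Herbrandize the antecedent into a modulus functional via $\HAC_{\INT}$, combine the implication into a single normal form, and apply term extraction. One correction: the quantifier swap from $(\forall y\in[0,1])(\exists^{\st}N)$ to $(\exists^{\st}N)(\forall y\in[0,1])$ that you need to reach your normal form for $\NSC_{1}$ is \emph{not} classical logic --- it is an application of \emph{Idealisation} \textsf{I} (followed by taking the maximum of the resulting finite sequence, as in Remark \ref{simply}), which is exactly how the paper handles it and is the one genuinely nonstandard ingredient in normalizing \emph{pointwise} (as opposed to uniform) continuity. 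Your treatment of the antecedent also differs cosmetically: you pull $(\forall^{\st}f,x,k)$ out of the antecedent as existentials in front (a drinker's-paradox move), whereas the paper simply drops the `st' on those quantifiers, weakening the implication and landing directly on $\MPC(\Xi)$ as the internal antecedent; both are classically valid and yield the required term after the same routine max/bounded-search post-processing.
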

\begin{proof}
See the proof of Corollary \ref{idare}.  
\end{proof}
By the previous theorem, term extraction as in Theorem \ref{consresultcor} converts $\paai$ into $(\mu^{2})$, i.e.\ the former fragment of \emph{Transfer} is rather non-constructive.  

\medskip

Finally, we should point out that it is possible to obtain (some) computational information from (non-constructive) proofs in classical mathematics, even involving comprehension.  This is the domain of \emph{proof mining}, to which Kohlenbach's monograph \cite{kohlenbach3} provides an excellent introduction.  Thus, our use of `$X$ is non-constructive' should be interpreted as the observation that $X$ is rejected in (parts of) constructive mathematics, while $X$ may have implicit constructive content, which can be brought out using proof mining.  In fact, we will use a technique from proof theory to obtain computational content from \emph{Standard Part} in Section~\ref{SSTP}.    

%
\subsubsection{Standard Part and related functionals}\label{firstkol}
We shall make use of the following fragments of the \emph{Standard Part} axiom:
\be\label{STP}\tag{\textsf{STP}}
(\forall \alpha^{1}\leq_{1}1)(\exists^{\st} \beta^{1}\leq_{1}1)(\alpha\approx_{1} \beta),
\ee
\be\label{STPR}\tag{$\textsf{\textup{STP}}_{\R}$}
(\forall x\in [0,1])(\exists^{\st}y\in [0,1])(x\approx y).  
\ee
These fragments are both equivalent to the following by Theorem \ref{klak} below:
\begin{align}\label{fanns}
(\forall T^{1}\leq_{1}1)\big[(\forall^{\st}n)(\exists \beta^{0})&(|\beta|=n \wedge \beta\in T ) \di (\exists^{\st}\alpha^{1}\leq_{1}1)(\forall^{\st}n^{0})(\overline{\alpha}n\in T)   \big]
\end{align}
where `$T\leq_{1}1$' denotes that $T$ is a binary tree.  
Clearly, \eqref{fanns} is a nonstandard version of \emph{weak K\"onig's lemma}, and the latter is a compactness principle by \cite{simpson2}*{IV}. 

\medskip

As it happens, $\STP$ and $\STP_{\R}$ express the nonstandard compactness of Cantor space and the unit interval by \emph{Robinson's theorem} in \cite{loeb1}*{p.\ 43}.  
Furthermore, \eqref{fanns} is equivalent to the following normal form:
\begin{align}\label{frukkklk}
(\forall^{\st}g^{2})(\exists^{\st}w^{1^{*}})(\forall T^{1}\leq_{1}1)(\exists ( \alpha^{1}\leq_{1}1,  &~k^{0}) \in w)\big[(\overline{\alpha}g(\alpha)\not\in T)\\
&\di(\forall \beta\leq_{1}1)(\exists i\leq k)(\overline{\beta}i\not\in T) \big] \notag
\end{align}
as shown below in Theorem \ref{klak}.  Term extraction as in Theorem \ref{consresultcor} converts \eqref{frukkklk} to the following `special fan functional'.
\bdefi[Special fan functional]
\begin{align}\label{fanns333}\tag{$\SCF(\Theta)$}
(\forall g^{2}, T^{1}\leq_{1}1)\big[ (\forall  \alpha^{1}\in \Theta(g)(2))(\alpha\leq_{1}1\di& \overline{\alpha}g(\alpha)\not\in T)\di\\
 &(\forall \beta\leq_{1}1)(\exists i\leq_{0}\Theta(g)(1))(\overline{\beta}i\not\in T) \big].\notag
\end{align}
\edefi
Any functional $\Theta^{2\di (1^{*}\times 0)}$ satisfying $\SCF(\Theta)$ is called a \emph{special fan functional} and the latter object was first introduced in \cite{samGH}.  
Note that there is no unique such $\Theta$, i.e.\ it is in principle incorrect to talk about `the' special fan functional.  

\medskip

The computability-theoretic properties of the special fan functional are studied in \cites{samGH, dagsam}. Intuitionistically, a special fan functional $\Theta$ can be computed (via a term in G\"odel's ${T}$; see Theorem \ref{koreki}) in terms of the \emph{intuitionistic fan functional} $\Omega$ as in $\MUC(\Omega)$ (See e.g.\ \cite{kohlenbach2}*{\S3} for the latter) defined as follows:
\be
(\forall Y^{2}) (\forall f^{1}, g^{1}\leq_{1}1)(\overline{f}\Omega(Y)=\overline{g}\Omega(Y)\notag \di Y(f)=Y(g)). \label{lukl3}\tag{$\textsf{\textup{MUC}}(\Omega)$}
\ee
Classically, $\Theta$ can be computed (Kleene's S1-S9 from \cite{longmann}*{\S5.1.1}) by $\xi$ as in $(\mathcal{E}_{2})$: 
\be\tag{$\mathcal{E}_{2}$}\label{hah}
(\exists \xi^{3})(\forall Y^{2})\big[  (\exists f^{1})(Y(f)=0)\asa \xi(Y)=0  \big].
\ee
but $\Theta$ cannot be computed (Kleene S1-S9) from any type two functional, which includes rather non-computable functionals like $(\mu^{2})$ and the Suslin functional.  
With regard to first order-strength, $\RCAo+(\exists \Theta)\SCF(\Theta)$ is a conservative extension of $\RCA_{0}^{2}+\WKL$ by \cite{kohlenbach2}*{Prop.\ 3.15}.  
All these results may be found in \cite{dagsam}.

\medskip

In conclusion, the special fan functional has rather weak first-order strength, while it is extremely hard to compute (compared to e.g.\ the computational strength of the Big Five systems of Reverse Mathematics).    As will become clear in Section~\ref{SSTP}, the special fan functional still provides plenty of computational content after applying an extra (algorithmic) step.

\section{The intermediate value theorem}\label{frastruc}
We discuss Wattenberg's treatment from \cite{watje}*{II} of the intermediate value theorem inside Nonstandard Analysis.  
\subsection{Introduction and preliminaries}
The intermediate value theorem $(\IVT)$ is a basic result from calculus and is usually formulated as follows.  
\begin{thm}[$\IVT$]
Suppose that $f:[a,b]\di \R$ is continuous and such that $f(a)<_{\R}0<_{\R}f(b)$.  Then there is $x\in [a,b]$ such that $f(x)=_{\R}0$.
\end{thm}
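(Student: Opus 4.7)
The plan is to follow the textbook nonstandard proof of $\IVT$: discretise $[a,b]$ by a hyperfinite grid, locate a ``last non-positive grid point'' by internal bounded minimisation, and close the argument with a single application of \emph{Standard Part}. Working in $\P$, I may restrict attention to standard $f,a,b$; after an affine rescaling (justified by Transfer) I take $[a,b]=[0,1]$, so that continuity is exactly Definition~\ref{Kont}.

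First, I would fix $N\in\Omega$ and set $x_{i}:=i/N$ for $i\leq N$. The internal set $\{i\leq N : f(x_{i})\leq_{\R}0\}$ is nonempty (it contains $0$, since $f(a)<_{\R}0$) and is bounded above by $N$, so its maximum $k$ exists by internal bounded minimisation, and by maximality $f(x_{k})\leq_{\R}0<_{\R}f(x_{k+1})$ (where $k+1\leq N$ because $f(x_{N})=f(b)>_{\R}0$). Since the standard continuous $f$ is nonstandard continuous by $\NSC_{1}$, and $x_{k}\approx_{\R} x_{k+1}$, we conclude $f(x_{k})\approx_{\R} f(x_{k+1})$, which together with the displayed inequality forces $f(x_{k})\approx_{\R}0$.

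The final step is to invoke $\STP_{\R}$ to produce a standard $x\in[0,1]$ with $x\approx_{\R} x_{k}$; a second application of nonstandard continuity gives $f(x)\approx_{\R} f(x_{k})\approx_{\R}0$, and since $f(x)$ is standard this upgrades to the required equality $f(x)=_{\R}0$.

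The main obstacle — and, given the framing of this paper, presumably the very reason the theorem is singled out here — is that closing invocation of $\STP_{\R}$. Everything preceding it is pure internal manipulation of a hyperfinite partition together with one appeal to $\NSC_{1}$, and thus falls squarely within the scope of term extraction via Theorem~\ref{consresultcor}; by contrast $\STP_{\R}$ is, via \eqref{fanns}, a nonstandard form of weak K\"onig's lemma, and is non-constructive in the sense emphasised in Section~\ref{intro}. I therefore expect the subsequent development to isolate exactly this step and to exhibit the Wattenberg/Osswald pattern — constructive up to a final, localised use of the standard part map — in the concrete case of $\IVT$.
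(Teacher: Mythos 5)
Your outline is the same as the paper's (i.e.\ Wattenberg's) proof as reproduced in Section~\ref{good}: hyperfinite grid, last non-positive grid point, infinitesimal closeness of consecutive grid points, and a closing application of \emph{Standard Part}. Two points need attention, one a genuine gap in the argument, one a misreading of where the non-constructivity sits.

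The gap: you invoke $\NSC_{1}$, i.e.\ \emph{pointwise} nonstandard continuity \eqref{soareyou3}, to conclude $f(x_{k})\approx f(x_{k+1})$. But \eqref{soareyou3} is of the form $(\forall^{\st}x\in[0,1])(\forall y\in[0,1])[x\approx y\di f(x)\approx f(y)]$: the first point must be \emph{standard}, whereas $x_{k}=k/N$ with $N\in\Omega$ is in general nonstandard. What this step actually requires is \emph{uniform} nonstandard continuity \eqref{soareyou4}, quantified over all pairs of points; this is exactly why the paper's item \eqref{cruxjes} is flagged as using uniform nonstandard continuity and why $\IVT_{\wat}$ takes nonstandard \emph{uniform} continuity as hypothesis. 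The step can be repaired inside your own setup, since you already assume $\STP_{\R}$: by Theorem~\ref{klak} (nonstandard compactness of $[0,1]$), pointwise nonstandard continuity yields the uniform version on $[0,1]$, which is how Theorem~\ref{konky2} proceeds. But as written the inference is not licensed.

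The misreading: you assert that everything before the $\STP_{\R}$ step ``falls squarely within the scope of term extraction'' and that the standard-part map is the sole non-constructive ingredient. A central point of Sections~\ref{good2}--\ref{bad} is that this is false: the passage from $\eps$-$\delta$ continuity to nonstandard continuity ($\NSC_{1}$) already implies $\paai$ (Theorem~\ref{hark2}), and under term extraction it becomes a modulus-of-continuity functional yielding $(\mu^{2})$ (Corollary~\ref{idare}) --- a fragment of \emph{Transfer}, at least as non-constructive as arithmetical comprehension. Your proof therefore has \emph{two} non-constructive inputs, not one. The paper's way of exhibiting the ``constructive until the last step'' phenomenon is precisely to \emph{not} derive nonstandard continuity from the $\eps$-$\delta$ notion, but to take nonstandard uniform continuity as the hypothesis of $\IVT_{\wat}$, and moreover to replace your internal bounded maximisation over the undecidable predicate $f(x_{j})\leq_{\R}0$ by the decidable approximation in \eqref{teke2} (a point Wattenberg himself raises), so that the grid search is given by an actual term.
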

As is well-known, $\IVT$ implies a non-trivial fragment of the law of excluded middle (See e.g.\ \cite{beeson1}*{I.7} or \cite{mandje}) and is therefore rejected in constructive mathematics.
The following `approximate' $\IVT$ is described as `constructive' by Wattenberg (\cite{watje}*{II.2}).
\begin{thm}[$\IVT_{\bullet}$]
Suppose that $\eps>_{\R}0$ and $f:[a,b]\di \R$ is continuous and such that $f(a)<_{\R}0<_{\R}f(b)$.  Then there is $x\in [a,b]$ such that $|f(x)|<_{\R}\eps$.
\end{thm}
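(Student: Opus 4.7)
The plan is to prove $\IVT_{\bullet}$ by the classical bisection algorithm, which is constructive precisely because the conclusion tolerates error $\eps$ rather than demanding an exact root. First, since $f$ is continuous on the compact interval $[a,b]$, it is uniformly continuous, so I would extract from a modulus of uniform continuity a rational $\delta > 0$ such that $|x - y| < \delta$ implies $|f(x) - f(y)| < \eps/2$ for all $x, y \in [a,b]$. Inside $\P$, this can equally be obtained from nonstandard uniform continuity plus underspill (Theorem~\ref{doppi}), but the rest of the argument is purely arithmetical and needs no further nonstandard input.

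Next I would construct by recursion finite sequences $(a_n)$ and $(b_n)$ with $a_0 = a$, $b_0 = b$, and $b_n - a_n = (b-a)/2^n$, maintaining the invariant $f(a_n) < \eps/2$ and $f(b_n) > -\eps/2$, which holds at $n = 0$ since $f(a) < 0 < f(b)$. At stage $n$, setting $c := (a_n + b_n)/2$, the overlapping disjunction $f(c) < \eps/2 \vee f(c) > -\eps/2$ is constructively decidable (at least one clause always holds for any real $f(c)$, and we deterministically choose one); whichever clause we pick tells us which half of the interval to take next while preserving the invariant. I would stop at the least $K$ with $(b-a)/2^K < \delta$, then combine the invariant with uniform continuity: from $|a_K - b_K| < \delta$ we get $|f(a_K) - f(b_K)| < \eps/2$; together with $f(b_K) > -\eps/2$ this forces $f(a_K) > -\eps$, while the invariant simultaneously gives $f(a_K) < \eps/2$, so $|f(a_K)| < \eps$ and $x := a_K$ is the desired witness.

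The main obstacle, and the point on which the constructive status of the proof hinges, is precisely the trichotomy step at each iteration. The sharp dichotomy $f(c) \geq 0 \vee f(c) < 0$ is not constructively decidable, which is exactly why the exact $\IVT$ implies non-trivial excluded middle and fails constructively; but the overlapping disjunction with strict inequalities separated by a positive gap \emph{is} decidable, and exploiting this slackness is what buys the constructive content of $\IVT_{\bullet}$. This fits with Wattenberg's slogan from the introduction that many nonstandard arguments are constructive until a final step involving standard part: the bisection above produces the witness $a_K$ as an explicit rational combination of $a$ and $b$ with $K$ bounded by $\lceil \log_2((b-a)/\delta) \rceil$, and no standard-part operation is invoked anywhere.
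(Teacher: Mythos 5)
Your bisection argument is correct, but it is a genuinely different route from the paper's. The paper in fact states $\IVT_{\bullet}$ \emph{without proof}, citing it as the version Wattenberg calls constructive; its own contribution is the effective theorem $\IVT_{\ef}(u)$, extracted in Theorem \ref{hark} from the nonstandard statement $\IVT_{\wat}$ of Theorem \ref{wakko}. There the witness is produced not by bisection but by an exhaustive search over the grid $x_{j}=a+jh$, $h=(b-a)/N$ for nonstandard $N$, via the term $t(f,N)$ in \eqref{teke2}: one takes the least $j$ such that the rational $2^{N}$-th approximation of $f(jh)$ is $\leq 0$, and nonstandard uniform continuity guarantees $f(t(f,N))\approx 0$; Theorem \ref{consresultcor} then converts this into a term meeting any standard precision $\frac{1}{k}$. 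The decidable comparison of \emph{rational approximations} in \eqref{teke2} plays exactly the role that your overlapping disjunction $f(c)<\eps/2\vee f(c)>-\eps/2$ plays: both are the device that sidesteps the undecidable dichotomy $f(c)\geq_{\R}0\vee f(c)<_{\R}0$ responsible for the non-constructivity of exact $\IVT$. What your approach buys is a direct, self-contained constructive proof with only $O(\log_{2}((b-a)/\delta))$ function evaluations and no nonstandard apparatus; what the paper's approach buys is a uniform \emph{term} $s(f,g,k)$ read off mechanically from the nonstandard proof, which is the point of the whole exercise there.

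One caveat you should make explicit: the opening step ``$f$ is continuous on the compact $[a,b]$, hence uniformly continuous'' is precisely the kind of move the paper is at pains to isolate as non-constructive. With continuity read pointwise as in \eqref{soareyou1}, the passage to uniform continuity on $[0,1]$ requires a compactness principle ($\WKL$, or $\STP$ on the nonstandard side, cf.\ Theorem \ref{konky2}), and extracting a modulus is a further non-trivial step. Your proof is fully constructive only if, as in Bishop's framework and as in the hypothesis of $\IVT_{\ef}(s)$, one takes ``continuous'' to mean ``uniformly continuous with a given modulus $g$''; under that reading the rest of your argument is watertight.
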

On one hand, a slight modification of Wattenberg's nonstandard proof of $\IVT$ will be shown to yield the following \emph{effective} version of $\IVT$ in Section~\ref{good}.  
\begin{thm}[$\IVT_{\ef}(s)$]
For $k^{0}$ and $f:[a,b]\di \R$ uniformly continuous with modulus $g$ and such that $f(a)<_{\R}0<_{\R}f(b)$, we have $|f(s(f,g,k))|<_{\R}\frac{1}{k}$.
\end{thm}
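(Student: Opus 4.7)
The plan is to imitate Wattenberg's nonstandard grid proof of $\IVT_{\bullet}$ inside $\P$, cast the conclusion in the normal form $(\forall^{\st}\underline{x})(\exists^{\st}\underline{y})\varphi$, and then invoke Theorem~\ref{consresultcor} to read off the term $s$.

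First I would prove inside $\P$ the following standardised version of $\IVT_{\bullet}$: for standard $f,g,k,a,b$, if $f$ is uniformly continuous on $[a,b]$ with modulus $g$ and $f(a)<_{\R}0<_{\R}f(b)$, then there is a standard $x\in[a,b]$ with $|f(x)|<_{\R}1/k$. The argument is essentially Wattenberg's, simplified by the presence of the standard modulus: pick any standard $N\geq (b-a)\cdot g(k+1)+1$ and set $x_i:=a+i(b-a)/N$ for $i\leq N$, so that consecutive grid points lie within $1/g(k+1)$; all of $x_0,\dots,x_N$ are standard by the basic axioms of $\T^{*}_{\st}$. Since $f(x_0)<_{\R}0<_{\R}f(x_N)$, an internal bounded search provides the least $i_0<N$ with $f(x_{i_0})<_{\R}0\leq_{\R}f(x_{i_0+1})$; by the modulus property $|f(x_{i_0})-f(x_{i_0+1})|<_{\R}1/(k+1)$, and combining this with the sign information forces $|f(x_{i_0})|<_{\R}1/k$. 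Since $x_{i_0}$ depends in a standard way on the standard data, it is itself standard.

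This statement has the shape $(\forall^{\st}f,g,k)(\exists^{\st}x)\,\psi(f,g,k,x)$ with $\psi$ internal, the hypotheses on $f,g,a,b$ being internal and hence pushable into $\psi$. Theorem~\ref{consresultcor} therefore extracts a closed term $t\in\mathcal{T}^{*}$ returning a finite sequence of candidates for $x$. Following the convention in Remark~\ref{simply}, one collapses $t$ to a single witness $s(f,g,k)$ by simply searching the finite list $t(f,g,k)$ for an index at which the (decidable) inequality $|f(\cdot)|<_{\R}1/k$ is verified; this amounts to a linear search through the standard grid of mesh $1/g(k+1)$. The resulting term $s$ then provably satisfies $\IVT_{\ef}(s)$ in $\RCAo$.

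The main obstacle is one of bookkeeping rather than conceptual content: one must pick the formal representation of ``uniform continuity with modulus $g$'' so that the implication $|x-y|<_{\R}1/g(k)\to|f(x)-f(y)|<_{\R}1/k$ is \emph{literally} an internal formula in the language of $\P$, and one must ensure that the bounded search over $i<N$ is carried out internally on a finite sequence of standard length, so that the whole derivation remains in the fragment to which Theorem~\ref{consresultcor} applies. Once this is in place the extracted algorithm is the obvious grid-search procedure, and no genuine use of \emph{Standard Part} or \emph{Transfer} has been made at any point.
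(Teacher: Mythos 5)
Your route is genuinely different from the paper's: the paper proves the \emph{nonstandard} theorem $\IVT_{\wat}$ (infinitesimal grid, nonstandard uniform continuity without a modulus) and then spends most of Theorem \ref{hark} massaging it into a normal form via underspill, the introduction of a standard modulus functional, and \emph{Idealisation}, precisely because $\IVT_{\wat}$ has no obvious normal form; you instead prove a fully `st'-relativized statement whose normal form is immediate. That shortcut is legitimate in principle, but as written your proof has a genuine gap at exactly the point the paper is careful about: the order relation $<_{\R}$ on represented reals is \emph{not} decidable (it is $\Sigma^{0}_{1}$). Consequently (i) ``an internal bounded search provides the least $i_{0}$ with $f(x_{i_{0}})<_{\R}0\leq_{\R}f(x_{i_{0}+1})$'' is not an effective search --- such an $i_{0}$ exists classically in $\P$, so your normal form is still \emph{provable}, but $i_{0}$ is not given by a term; and (ii) your final collapse of the extracted finite list to a single witness by ``searching for an index at which the (decidable) inequality $|f(\cdot)|<_{\R}1/k$ is verified'' fails, since that inequality is not decidable and Remark \ref{simply} does not apply (your existential quantifier ranges over a point, not a monotone threshold). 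This is not mere bookkeeping: it is the same issue that forces the paper to define $t(f,N)$ in \eqref{teke2} via the finite-precision test $[f(jh)](2^{N})\leq_{0}0$ rather than via $f(x_{j})\leq_{\R}0$, and that makes Wattenberg's own item \eqref{frag} problematic.

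The gap is repairable in two standard ways. Either prove the normal form with slack, e.g.\ $(\exists^{\st}x)(|f(x)|<_{\R}\frac{1}{2k})$, so that the Herbrand list can be searched using rational approximations $[f(x)](m)$ of sufficient precision and any candidate passing the approximate test satisfies the desired $|f(x)|<_{\R}\frac1k$; or do as the paper does and arrange matters so that the extracted datum is a monotone threshold $n$ (for which Remark \ref{simply} gives a single witness by taking a maximum), with the intermediate point itself supplied by an explicit approximate grid-search term. In fact, once you make the search decidable by testing rational approximations, your argument becomes a direct definition of $s$ verifiable in $\RCAo$ with no term extraction needed at all --- essentially Bishop's proof of the approximate $\IVT$ --- which is fine for establishing $\IVT_{\ef}(s)$ but bypasses the paper's actual goal of extracting $s$ from Wattenberg's nonstandard argument.
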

Note that this version no longer involves Nonstandard Analysis.  Furthermore, the term $s$ is `read off' from the (modified) Wattenberg proof.  
Thus, Wattenberg's claims about the effective content of Nonstandard Analysis do seem to hold water.  

\medskip

One the other hand, some of Wattenberg's (explicit and implicit) claims regarding the constructive status of the nonstandard axioms \emph{Transfer} and \emph{Standard Part} will be shown to be incorrect.  
In particular, Wattenberg suggests that \emph{Standard Part} is fundamentally non-constructive (See \cite{watje}*{p.\ 303} and Section \ref{intro}), 
but freely (either explicitly or implicitly) makes use of \emph{Transfer}.  We show that \emph{Transfer} as used by Wattenberg is fundamentally non-constructive in Section \ref{good2}, while the constructive status of \emph{Standard Part} is discussed in Section \ref{bad}.

\subsection{Constructive $\IVT$ and Nonstandard Analysis}\label{good}
Wattenberg proves the classical $\IVT$ inside Nonstandard Analysis in \cite{watje}*{II.3} using the following steps:
\begin{enumerate}  
\renewcommand{\theenumi}{\roman{enumi}}
\item Define $x_{j}:=a+jh$ for $h:=\frac{b-a}{N}$ where $N^{0}$ is a nonstandard number.  \label{koer}
\item Let $j_{0}$ be the largest $j$ such that $f(x_{j})\leq_{\R} 0$.\label{frag}
\item Since $f(x_{j_{0}})\leq_{\R} 0\leq_{\R}f(x_{j_{0}+1})$ and $x_{j_{0}}\approx x_{j_{0}+1}$, we have $f(x_{j_{0}})\approx 0$. \label{cruxjes}
\item Let standard $t_{0}\in [0,1]$ be such that $t_{0}\approx x_{j_{0}}$ and conclude $f(t_{0})=_{\R}0$.  \label{noco}
\end{enumerate}
Note that item \eqref{cruxjes} makes use of \emph{uniform} nonstandard continuity, while item \eqref{noco} makes use of $\STP$ to obtain $t_{0}$.
According to Wattenberg (\cite{watje}*{p.\ 303}), the final step of the proof is non-constructive (as it involves $\STP$) and should therefore be omitted.  
Following this approach, the previous steps immediately yield Theorem~\ref{wakko} where $t^{((1\di 1)\times 0)\di 0}$ is defined as follows (with the real $h:=\frac{b-a}{N}$):
\be\label{teke2}
t(f,N):=
\begin{cases}
(\mu j \leq N)\big(\big[f\big(jh\big)\big](2^{N})\leq_{0} 0\big)h & \textup{if such exists}\\
N+1 & \textup{otherwise}
\end{cases}.
\ee
\begin{thm}[$\IVT_{\textsf{\wat}}$]\label{wakko}
For nonstandard $N^{0}$ and nonstandard uniformly continuous $f:[a,b]\di \R$ such that $f(a)<_{\R}0<_{\R}f(b)$, $t(f,N)\in [a,b]\wedge f(t(f, N))\approx 0$.  
\end{thm}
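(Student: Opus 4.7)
The plan is to carry out steps \eqref{koer}--\eqref{cruxjes} of the Wattenberg argument reproduced immediately above the statement, deliberately stopping just short of the non-constructive step \eqref{noco} that invokes $\STP$. The role of the term $t$ is precisely to replace step \eqref{noco} by a fixed finite-precision readout of the partition index, i.e.\ by the bounded $\mu$-operator on $2^{N}$-approximations of $f$.

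Setting $h := (b-a)/N$ and $x_{j} := a+jh$ for $j\leq N$ (in line with Wattenberg's partition, and reading the $jh$ of \eqref{teke2} as shorthand for the point $a+jh$ in $[a,b]$), the first task is to show that the $\mu$-branch in the definition of $t$ is always the one that fires. Since $f(a) <_{\R} 0$ there is a standard $k$ with $[f(a)](k) <_{\R} -\tfrac{1}{2^{k-1}}$, and because $N$ is nonstandard the rational approximation $[f(a)](2^{N})$ differs from $f(a)$ by at most the infinitesimal $2^{-N}$; hence $[f(a)](2^{N}) \leq_{0} 0$, witnessing the $\mu$-branch already at $j=0$. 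Thus $t(f,N) = x_{j_{0}}$ for some $j_{0}\leq N$, and $t(f,N) \in [a,b]$ is immediate from $0\leq j_{0}\leq N$.

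The crux is then step \eqref{cruxjes}. By the defining property of $j_{0}$, the approximate values $f(x_{j_{0}})$ and $f(x_{j_{0}+1})$ lie on opposite sides of $0$ up to the infinitesimal $2^{-N}$: one satisfies the sign condition that delimits $j_{0}$ and the adjacent index fails it (reading the $\mu$ in \eqref{teke2} in the sense intended by step \eqref{frag}, namely as locating the transition index of the sign pattern). Since $N\in \Omega$ we have $h\approx 0$, hence $x_{j_{0}}\approx x_{j_{0}+1}$; the nonstandard uniform continuity of $f$ then yields $f(x_{j_{0}})\approx f(x_{j_{0}+1})$. Two infinitely close reals straddling $0$ must each be $\approx 0$, and in particular $f(t(f,N))\approx 0$, as required.

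The only delicate point, and the main potential obstacle, is reconciling the finite-precision sign test $[f(jh)](2^{N}) \leq_{0} 0$ appearing in $t$ with the exact sign comparison informally used in step \eqref{cruxjes}. This reconciliation is forced by the fact that $N$ is nonstandard: the approximation error $2^{-N}$ is itself infinitesimal, so the sign computed at precision $2^{N}$ agrees with the true sign up to an infinitesimal, and the adjacency dichotomy between consecutive partition points survives without loss. Crucially, no appeal to $\STP$ or $\paai$ is made anywhere; only uniform nonstandard continuity intervenes, which keeps the argument ``locally constructive'' in Osswald's sense and in particular keeps it in a form to which term extraction as in Theorem~\ref{consresultcor} can be applied later in Section~\ref{good}.
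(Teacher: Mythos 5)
Your proposal is correct and follows essentially the same route as the paper: the paper's own justification is precisely that steps \eqref{koer}--\eqref{cruxjes} together with the definition of $t$ in \eqref{teke2} yield the theorem, with the accompanying remark that $t$ only implements item \eqref{frag} up to nonstandard precision, which is ``equally good'' by nonstandard uniform continuity --- exactly the reconciliation you carry out in your final paragraph. Your additional check that the $\mu$-branch fires and your explicit treatment of the $2^{-N}$ approximation error are faithful elaborations of what the paper leaves implicit, so there is nothing to correct.
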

Note that $t$ does not implement item \eqref{frag}, but only an \emph{approximation} up to nonstandard precision.  This is however \emph{equally good} in light of nonstandard (uniform) continuity from item \eqref{cruxjes}.  It should be noted that Wattenberg observes this problem with item \eqref{frag} as well in \cite{watje}*{p.\ 304}.  He proposes to replace `$\leq_{\R}$' by `$\lessapprox$' in \cite{watje}*{II.6}, which is however not a decidable\footnote{It should be noted that the constructive system $\H$ proves the `lesser limited principle of omniscience' $\LLPO$ relative to `st' (\cite{brie}*{\S3.1}) which implies $(\forall^{\st} x\in \R)(x\lessapprox 0 \vee x\gtrapprox 0)$, i.e.\ an instance of the law of excluded middle relative to `st'.  
However, the decision procedure for the latter follows easily from the constructive fact that $(\forall x\in \R)(x\geq_{\R}0 \vee x\leq_{\R}a)$ for $0<_{\R}a\approx 0$.} predicate either.  

\medskip

We have the following theorem where it should be noted that $\IVT_{\wat}$ does not have an obvious normal form (to apply Theorem \ref{consresultcor}).  
\begin{thm}\label{hark}
From the proof of $\IVT_{\textsf{\wat}}$ in $\P_{0}$, a term $u$ can be extracted such that $\RCAo$ proves $\IVT_{\ef}(u)$.
\end{thm}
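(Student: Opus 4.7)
The plan is to bring $\IVT_{\textsf{\wat}}$ into the normal form $(\forall^{\st}\underline{x})(\exists^{\st}\underline{y})\psi$ to which Corollary~\ref{consresultcor2} applies. First, by $\HAC_{\INT}$ applied to the normal form of nonstandard uniform continuity \eqref{soareyou4} in the manner of Remark~\ref{simply}, nonstandard uniform continuity of a standard $f$ on $[a,b]$ is equivalent in $\P_{0}$ to the existence of a standard modulus of uniform continuity $g^{1}$ for $f$. We therefore strengthen the hypothesis of $\IVT_{\textsf{\wat}}$ to take $f$ and $g$ as joint standard inputs, where $g$ is a modulus of uniform continuity of $f$; Wattenberg's proof of $\IVT_{\textsf{\wat}}$, steps \ref{koer}--\ref{cruxjes}, then carries over inside $\P_{0}$ with this explicit modulus. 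The only subtlety is that the undecidable `$\leq_{\R}$' in step \ref{frag} is replaced by the rational comparison $[f(jh)](2^{N})\leq_{0}0$ from the definition \eqref{teke2} of $t$; the bounded error introduced by this approximation is absorbed into the infinitesimal error already allowed by nonstandard uniform continuity at step \ref{cruxjes}.

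Next, fix standard $f,g,k$ verifying the strengthened hypothesis and unfold the conclusion $f(t(f,N))\approx 0$ as $(\forall^{\st}k^{0})(|f(t(f,N))|<_{\R}\tfrac{1}{k})$. Since $(\forall N\in\Omega)(|f(t(f,N))|<_{\R}\tfrac{1}{k})$, and since $N\geq K$ with $K\in\Omega$ forces $N\in\Omega$, the internal formula
\[
\psi(K,f,g,k)\equiv (\forall N\geq K)\bigl(|f(t(f,N))|<_{\R}\tfrac{1}{k}\bigr)
\]
holds for every nonstandard $K^{0}$. Underspill (Theorem~\ref{doppi}) then produces a standard $K$ with $\psi(K,f,g,k)$. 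Pulling the internal hypothesis $\textup{Hyp}(f,g,k)$---namely the modulus condition on $g$ together with $f(a)<_{\R}0<_{\R}f(b)$---to the right, we obtain the normal form
\[
(\forall^{\st}f^{1},g^{1},k^{0})(\exists^{\st}K^{0})\bigl[\textup{Hyp}(f,g,k)\to \psi(K,f,g,k)\bigr].
\]

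Finally, Corollary~\ref{consresultcor2} produces a closed term $u_{0}$ of $\RCAo$ such that, for all $f,g,k$ with $\textup{Hyp}(f,g,k)$, some $K$ from the finite sequence $u_{0}(f,g,k)$ witnesses $\psi$. Since $\psi(K,f,g,k)$ is monotone in $K$, the convention of Remark~\ref{simply} allows us to replace this finite sequence by its maximum and obtain a single witness $K(f,g,k)$. Setting $u(f,g,k):=t(f,K(f,g,k))$ with $t$ as in \eqref{teke2} yields $\RCAo\vdash\IVT_{\ef}(u)$. The principal obstacle will be the first step: verifying inside $\P_{0}$ that Wattenberg's informal proof of $\IVT_{\textsf{\wat}}$ goes through with the standard modulus $g$ in place of `nonstandard uniform continuity' and with the rational approximation in place of the undecidable `$\leq_{\R}$'; once this is in place, the remaining steps reduce to routine applications of underspill and of Theorem~\ref{consresultcor2}.
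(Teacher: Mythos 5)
Your proposal is correct and follows the paper's own Template for extracting computational content; the main difference is where you stop pushing quantifiers. The paper, after introducing the standard modulus $g$ and dropping `st' in the antecedent, additionally applies \emph{Idealisation} to pull the quantifier $(\exists^{\st}n)$ in front of the unrelativised $(\forall f:[0,1]\di\R)$, so that term extraction yields a bound $s(g,l)$ depending only on the modulus and the precision, and the final term is $u(f,g,k):=t(f,\max_{i<|s(g,k)|}s(g,k)(i))$. You instead keep $f$ as a standard input throughout, obtaining a normal form $(\forall^{\st}f,g,k)(\exists^{\st}K)[\textup{Hyp}\to\psi]$ and a witness $K(f,g,k)$ that depends on $f$ as well. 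Both routes are legitimate and both establish $\IVT_{\ef}(u)$ as stated, since $u$ is allowed to take $f$ as an argument; the paper's extra Idealisation step buys uniformity of the bound in $f$ (and is the natural move given that $\IVT_{\wat}$ as formulated carries no standardness restriction on $f$), while your version is a little shorter and avoids step (ii.b) of the Template. Your remaining steps --- resolving $\approx$ via $(\forall^{\st}k)$, underspill to get a standard threshold $K$, monotonicity of $\psi$ in $K$ so that Remark \ref{simply} yields a single witness from the finite sequence --- match the paper's argument, and your explicit attention to the rational comparison $[f(jh)](2^{N})\leq_{0}0$ replacing the undecidable $\leq_{\R}$ is a point the paper passes over silently.
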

\begin{proof}
The proof of $\IVT_{\wat}$ in $\P_{0}$ follows immediately from the definition of $t$ in \eqref{teke} and steps \eqref{koer}-\eqref{cruxjes} above.  
We now convert $\IVT_{\wat}$ into a normal form so that we can apply Theorem \ref{consresultcor}; we assume $a=0$ and $b=1$ for simplicity.  
Thus, let $A(f, k, N)$ be the conjunction of ${ f(0)<_{\R}0<_{\R}f(1)}$ and the formula in square brackets in \eqref{soareyou2}, and note that $(\forall^{\st}k^{0})(\exists^{\st}N^{0})A(f, k,N)$ implies that $f$ is continuous as in \eqref{soareyou4}.  In this way, $\IVT_{\wat}$ implies that for $f:[0,1]\di \R$ we have
\[
(\forall^{\st}k^{0})(\exists^{\st}M^{0})A(f, k,M) \di (\forall N\in \Omega )(t(f,N)\in [0,1]\wedge f(t(f, N)h/N)\approx 0),
\]
which immediately yields by resolving `$\approx$' that 
\[\textstyle
(\forall^{\st}k^{0})(\exists^{\st}M^{0})A(f, k,M) \di (\forall^{\st}l^{0})(\forall N\in \Omega )\big[t(f,N)\in [0,1]\wedge |f(t(f, N)h/N)|<\frac{1}{l}\big],
\]
where we abbreviate the formula in square brackets by $B(f, N, l)$.  Applying underspill to $(\forall N\in \Omega )B(f, N, l)$, we obtain $(\forall^{\st}l^{0})(\exists^{\st}n^{0})(\forall N\geq n )B(f, N, l)$.  Thus, $\IVT_{\wat}$ implies that for all $f:[0,1]\di \R$ we have
\be\label{konkey}
 \big[(\forall^{\st}k)(\exists^{\st}M)A(f, k,M) \di   (\forall^{\st}l)(\exists^{\st}n)(\forall N\geq n )B(f, N, l)\big], 
\ee
and, since standard functionals yield standard output on standard input, we have for all $f:[0,1]\di \R$ that
\be\label{kakoo}
(\forall^{\st}g^{1}) \big[(\forall^{\st}k^{0})A(f, k,g(k)) \di   (\forall^{\st}l^{0})(\exists^{\st}n^{0})(\forall N\geq n )B(f, N, l)\big].
\ee
By strengthening the antecedent, the previous formula yields:
\be\label{wakoo}
(\forall^{\st}g)(\forall f:[0,1]\di \R) \big[(\forall k)A(f, k,g(k)) \di   (\forall^{\st}l)(\exists^{\st}n)(\forall N\geq n )B(f, N, l)\big].  
\ee
Now bring outside all standard quantifiers as far as possible to obtain:
\be\label{wakoo2}
(\forall^{\st}g, l)(\forall f:[0,1]\di \R) (\exists^{\st}n)\big[(\forall k^{0})A(f, k,g(k)) \di   (\forall N\geq n )B(f, N, l)\big].  
\ee
Let $C(g, l, f, n)$ be the formula in square brackets in the previous formula.  
Apply \emph{Idealisation} \textsf{I}, while bearing in mind Remark \ref{simply}, to push the standard quantifier involving $n$ to the front as follows:
\be\label{cruxi}
(\forall^{\st}g^{1}, l^{0}) (\exists^{\st}n^{0})(\forall f:[0,1]\di \R)C(g, l, f, n). 
\ee
Now apply Theorem \ref{consresultcor} to `$\P\vdash\eqref{cruxi}$' to obtain a term $s$ such that $\EPA^{\omega*}$ proves
\be\label{cruxi2}
(\forall^{\st}g^{1}, l^{0}) (\exists n^{0}\in s(g, l))(\forall f:[0,1]\di \R)C(g, l, f, n). 
\ee
Define $u(f, g, k):=t(f, \max_{i<|s(g, k)|}s(g, k)(i))$ and note that $\IVT_{\ef}(u)$.  
\end{proof}
The previous proof serves as a template for obtaining computational content from Nonstandard Analysis as follows.  
The below proofs follow this template and we will therefore not always go in as much detail as in the previous proof.  
\begin{tempie}[Computational content of Nonstandard Analysis]\label{tempe}\rm~
\begin{enumerate}
\renewcommand{\theenumi}{\roman{enumi}}
\item Bring all sub-formulas into a normal form like in \eqref{konkey}.
\item Push all standard quantifiers to the front as follows:  
\begin{enumerate}
\item[(ii.a)] If necessary, introduce standard functionals like $g$ in \eqref{kakoo} and drop `st' in the antecedent like in \eqref{wakoo}. 
\item[(ii.b)] If necessary, use \emph{Idealisation} (like for \eqref{wakoo2}) to pull standard quantifiers through normal quantifiers.  
\end{enumerate}
\item Obtain a normal form like \eqref{cruxi} and apply Theorem \ref{consresultcor} using Remark \ref{simply}.  
\end{enumerate}
\end{tempie}
Furthermore, there is a subtlety involved in the formulation of $\IVT_{\wat}$ as follows.  
\begin{rem}[On $\Omega$-invariance]\label{notnotnting}\rm
By definition, $t(f, N)$ from $\IVT_{\wat}$ is such that $f(t(f, N))\approx 0$ for any $N\in \Omega$.  However, if $f$ has multiple intermediate values, i.e.\ there are $x, y\in [0,1]$ such that $f(x)\approx f(y)\approx 0$ but $x\not \approx y$, then it is possible that $t(f, N)\not\approx t(f, M)$ for $N, M\in \Omega$.  In other words, we cannot use $\Omega$\textsf{-CA} to obtain a \emph{standard} intermediate value of $f$.  
\end{rem}
As it turns out, the proof of the theorem also goes through constructively, as follows.  
This is far from obvious as the proof seems to involve non-constructive steps like the independence of 
premise principle to go from \eqref{wakoo} to \eqref{wakoo2}.  
\begin{cor}
From the proof of $\IVT_{\textsf{\wat}}$ in $\H$, a term $u$ can be extracted such that $\textsf{\textup{E-HA}}^{\omega*}$ proves $\IVT_{\ef}(u)$.
\end{cor}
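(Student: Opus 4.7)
The plan is to follow Template \ref{tempe} and the proof of Theorem \ref{hark} step by step, but replace each use of a classical principle by the corresponding constructive axiom available in $\H$. The proof of $\IVT_{\wat}$ itself goes through in $\H$ without change: items (i)--(iii) of Wattenberg's argument only manipulate the nonstandard grid $x_{j}=a+jh$ together with the nonstandard uniform continuity hypothesis and the definition of the term $t$ in \eqref{teke2}, and none of these steps uses the law of excluded middle. Thus $\H\vdash\IVT_{\wat}$.

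Next, the derivations of \eqref{konkey} and \eqref{kakoo} from $\IVT_{\wat}$ carry over verbatim. Indeed, the use of underspill is available in $\H$ by Theorem \ref{doppi}, and the Skolemisation yielding the standard functional $g$ in the antecedent of \eqref{kakoo} uses $\HAC_{\INT}$, which is included in $\H$ via the stronger axiom $\HAC$. Weakening \eqref{kakoo} to \eqref{wakoo} is purely propositional.

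The delicate step is passing from \eqref{wakoo} to \eqref{wakoo2}, which classically invokes independence of premise to pull $(\exists^{\st}n)$ outside the implication whose antecedent is the internal formula $(\forall k^{0})A(f,k,g(k))$. Here we use $\textsf{HIP}_{\forall^{\st}}$: since the antecedent is internal, we may trivially rewrite it as $(\forall^{\st}z^{0})(\forall k^{0})A(f,k,g(k))$ by padding with a dummy standard quantifier, which puts it in the form required by $\textsf{HIP}_{\forall^{\st}}$. This yields a standard finite sequence $n'$ of witnesses in place of a single $n$, which by Remark \ref{simply} collapses to one witness upon taking the maximum (the consequent $(\forall N\geq n)B(f,N,l)$ is monotone in $n$ in the relevant direction after this max). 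Similarly, the final step from \eqref{wakoo2} to the normal form \eqref{cruxi} classically used the contrapositive of Idealisation, which is not intuitionistically valid; but $\H$ contains $\NCR$, which directly converts $(\forall f)(\exists^{\st}n)\,C(g,l,f,n)$ into $(\exists^{\st}n^{*})(\forall f)(\exists n'\in n^{*})\,C(g,l,f,n')$, again yielding a finite sequence that we collapse as before.

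Having established a normal form of the shape $(\forall^{\st}g,l)(\exists^{\st}n)(\forall f)C(g,l,f,n)$ in $\H$, we apply the constructive term extraction Theorem \ref{consresult2} in place of Theorem \ref{consresultcor}, extracting a closed term $s\in\mathcal{T}^{*}$ such that $\textsf{E-HA}^{\omega*}$ proves the analogue of \eqref{cruxi2}. Defining $u(f,g,k):=t(f,\max_{i<|s(g,k)|}s(g,k)(i))$ exactly as in the classical case then yields $\IVT_{\ef}(u)$ in $\textsf{E-HA}^{\omega*}$. The main obstacle in the argument is the recognition that each classical move in the template of Theorem \ref{hark} corresponds to precisely one of the axioms of $\H$ (namely $\textsf{HIP}_{\forall^{\st}}$ for the IP step and $\NCR$ for the Idealisation step); once this correspondence is made, the rest of the argument is a mechanical translation.
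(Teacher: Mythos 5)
Your proposal is correct and follows essentially the same route as the paper: the steps up to \eqref{wakoo} carry over, $\textsf{HIP}_{\forall^{\st}}$ replaces independence of premise for the move to \eqref{wakoo2}, $\NCR$ replaces the contraposition of Idealisation for the move to \eqref{cruxi}, and Theorem \ref{consresult2} supplies the term. One tiny quibble: the introduction of the standard functional $g$ in \eqref{kakoo} is not a Skolemisation via $\HAC_{\INT}$ but merely a weakening of the implication (for each standard $g$ the new antecedent implies the old one, since standard functionals give standard output on standard input), so that step needs no choice axiom at all.
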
 
\begin{proof}
Clearly, the above proof of $\IVT_{\wat}$ goes through in $\H$.  Furthermore, one easily derives \eqref{cruxi2} from $\IVT_{\wat}$ in $\H$.  Indeed, the steps leading up to \eqref{wakoo} clearly go through in $\H$.  For the step from \eqref{wakoo} to \eqref{wakoo2}, the `$(\forall^{\st}l)$' quantifier can be brought to the front in intuitionistic logic, and the same can then be done for the quantifier `$(\exists^{\st}n)$' using the axiom $\HIP_{\forall^{\st}}$ from Definition \ref{flah}, while bearing in mind Remark \ref{simply} as usual.  Having obtained \eqref{wakoo2}, one applies $\NCR$ to obtain \eqref{cruxi}, again bearing in mind Remark \ref{simply}.  Finally, one applies Theorem \ref{consresult2} to `$\H\vdash \eqref{cruxi}$' to obtain the required term.  
\end{proof}
In conclusion, Wattenberg's claim that Nonstandard Analysis has effective (even constructive) content seems correct in light of Theorem \ref{hark} and its corollary.

\subsection{Non-constructivity arising from continuity}\label{good2}
In this section, we deal with the exact connection between nonstandard and $\eps$-$\delta$-continuity.  
We are motivated by the observation that Wattenberg 
uses the (provable using \emph{Transfer}) equivalence between nonstandard and $\eps$-$\delta$-continuity without a second thought in \cite{watje}*{III.3}.  

\medskip

By the following theorem and Theorem \ref{klabaka}, any step from $\eps$-$\delta$-continuity to nonstandard continuity, i.e.\ $\NSC_{1}$ and $\NSC_{2}$ in Definition \ref{mikeh}, implies a non-trivial fragment of \emph{Transfer} and is therefore fundamentally \emph{non-constructive}.  
\begin{thm}\label{hark2} 
The systems $\P_{0}+\NSC_{2}$ and $\P+\NSC_{1}$ both prove $\paai$.  
\end{thm}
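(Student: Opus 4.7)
The plan is to derive $\paai$ by constructing, for each standard $f^{1}$, a standard continuous function $h_{f}\colon [0,1]\to \R$ whose nonstandard continuity at $0$ is incompatible with $f$ having a nonstandard zero.  Fixing a standard $f^{1}$ satisfying the antecedent of $\paai$, namely $(\forall^{\st}n)(f(n)\neq 0)$, I would argue by contradiction and suppose $(\exists m^{0})(f(m)=0)$.  Then $m$ is necessarily nonstandard, hence so is the least $n$ with $f(n)=0$, which I denote $n_{0}$.

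Concretely, let $\phi_n(x) := \max\bigl(0,\,1 - 2^{n+3}|x - 2^{-n-1}|\bigr)$ be the triangular bump of height $1$ centered at $2^{-n-1}$ with support of radius $2^{-n-3}$; the supports of distinct $\phi_n$'s are pairwise disjoint.  Define
\[
h_{f}(x) := \sum_{n} a_{n}(f)\,\phi_{n}(x),\qquad a_{n}(f) := \begin{cases} 1 & \text{if } f(n)=0 \wedge (\forall k < n)(f(k)\neq 0),\\ 0 & \text{otherwise.} \end{cases}
\]
At most one $a_n(f)$ equals $1$, and for each $x$ at most one $\phi_n(x)$ is nonzero, so $h_f$ is a closed term of $\mathcal{T}^{*}$ and hence a standard object.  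Internally, $h_f$ is either identically zero or equals the single bump $\phi_{n_0}$; both are Lipschitz, so $h_f \in C([0,1])$ in the sense of \eqref{soareyou1}, which is what $\NSC_1$ requires of its input.  For $\NSC_2$ I would additionally verify $h_f \in C^{\st}([0,1])$: under the standing hypothesis, $n_0$ (whenever it exists) must itself be nonstandard, so the support of $\phi_{n_0}$ lies in an infinitesimal neighborhood of $2^{-n_0-1} \approx 0$; hence $h_f$ vanishes on every standard real in $[0,1]$, and \eqref{soareyou1}$^{\st}$ is witnessed by $N := 1$ for arbitrary standard $k,x$.

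With $h_f$ in the appropriate class, $\NSC_1$ (resp.\ $\NSC_2$) forces nonstandard continuity \eqref{soareyou3} of $h_f$, and specialising to the standard point $0$ gives $(\forall y\in[0,1])(y\approx 0 \to h_f(y)\approx 0)$.  But taking $y := 2^{-n_0-1}$, which is infinitesimal since $n_0 \in \Omega$, one computes $h_f(y) = \phi_{n_0}(2^{-n_0-1}) = 1 \not\approx 0 = h_f(0)$, the desired contradiction.  Therefore $(\forall m)(f(m) \neq 0)$, completing the derivation of $\paai$.

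The main obstacle is the design of $h_f$: it must simultaneously be (i) a standard closed term of $\mathcal{T}^{*}$, (ii) \emph{internally} continuous for \emph{every} $f^{1}$, irrespective of whether $f$ admits zeros, so that $\NSC_1$ or $\NSC_2$ is genuinely applicable, and (iii) \emph{externally} discontinuous at $0$ precisely when $f$ has a nonstandard zero but no standard one.  The ``first-zero'' device $a_n(f)$ resolves (ii) by ensuring at most one spike is ever active internally, while placing that unique spike at the infinitesimal location $2^{-n_0-1}$ with non-infinitesimal height $1$ achieves (iii).
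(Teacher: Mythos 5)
There is a genuine gap, and it sits exactly where the logical strength of $\paai$ hides: your $h_{f}$ is not a legitimate \emph{standard} object of $\P_{0}$ or $\P$, so neither $\NSC_{1}$ nor $\NSC_{2}$ can be applied to it. The claim that $h_{f}$ ``is a closed term of $\mathcal{T}^{*}$'' is false: closed terms denote computable functionals, whereas $h_{f}$ is not computable from $f$ and the Cauchy representation of $x$. Indeed, suppose every approximation of $x$ read so far is $0$ and no zero has been found among $f(0),\dots,f(M)$; to output a rational within $\frac{1}{4}$ of $h_{f}(x)$ one must still distinguish the case $x=0$ (value $0$) from the case $x=2^{-m-1}$ with first zero $m>M$ (value $1$), and both cases are consistent with the finite data read, precisely because your bump keeps non-infinitesimal height at unboundedly small locations. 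Locating the first zero of $f$, or certifying there is none, is exactly the content of $(\mu^{2})$, i.e.\ of what you are trying to derive, so the construction is circular: $\P_{0}$ and $\P$ cannot prove that a standard functional of type $1\di 1$ with the defining property of $h_{f}$ exists. (Damping the heights to $2^{-n}$ restores computability but destroys the external discontinuity.)

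A second, independent error affects your $\NSC_{2}$ case: $h_{f}$ does \emph{not} vanish at every standard real of $[0,1]$. Without \emph{Transfer}, a standard real can be positive and infinitesimal: the standard Cauchy sequence $\sigma(k):=2^{-\nu(k)-1}$, where $\nu(k)$ is the least $n\leq k$ with $f(n)=0$ if such exists and $k$ otherwise, represents the standard real $2^{-n_{0}-1}$, at which $h_{f}$ equals $1$; taking the standard point $y=0$ then already refutes \eqref{soareyou1}$^{\st}$ there. The paper's proof is built around both obstacles: for $\NSC_{1}$ it packs the offending zero into a single standard positive infinitesimal $x_{0}=\sum_{n}h(n)2^{-n}$ and uses the manifestly term-definable, internally continuous function $\frac{1}{|x|+x_{0}}$; for $\NSC_{2}$ it uses a case split on the single rational approximation $[x](1)$, a decidable and hence term-definable condition, accepting that the resulting function is only continuous relative to `$\st$'. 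If you want to keep the spirit of your bump idea, you must encode the location of the first zero into standard \emph{data} (a real or a sequence) rather than into a search buried inside the definition of the function.
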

\begin{proof}
For the first part, we work in $\P_{0}+\NSC_{2}$.  Thus, fix standard $f_{0}\in C^{\st}([0,1])$ and suppose $\neg\paai$, i.e.\ there is standard $h_{0}^{1}$ such that $(\forall^{\st}n)h_{0}(n)=0$ and $(\exists m_{0})h(m_{0})\ne0$.  Let $\mathbb{b}^{1}$ be such that $\mathbb{b}(q)=q$ if $q^{0}\ne 0$ and $1$ otherwise.  
Now define standard $f_{1}$ as follows: $f_{1}(x)=f_{0}(x)$ if $\big(\forall n^{0}\leq \frac{1}{\mathbb{b}([x](1))}\big)(h_{0}(n)=0)$, and $f_{0}(x)+1$ otherwise.  
Since for a standard real $x\in[0,1]$, the rational $[x](n)$ is standard for standard $n^{0}$, we have $(\forall^{\st}x\in [0,1])(f_{0}(x)=_{\R}f_{1}(x))$, and hence $f_{1}\in C^{\st}([0,1])$ by definition.  
However, $f_{1}$ is not nonstandard continuous since:
\[\textstyle
f_{1}(0)=_{\R}f_{0}(0)\not\approx f_{0}(0)+1\approx f_{0}(\frac{1}{2^{N}})+1=_{\R}f_{1}(\frac{1}{2^{N}})
\]
for large enough nonstandard $N^{0}$.  This contradiction finishes the first part.

\medskip

For the second part, we work in $\P_{0}+\NSC_{1}$.  Suppose $\neg\paai$, i.e.\ there is standard $h_{0}^{1}$ such that $(\forall^{\st}n)h_{0}(n)=0$ and $(\exists m_{0})h(m_{0})\ne0$.  
Define the standard real $x_{0}$ as $\sum_{n=0}^{\infty}\frac{h(n)}{2^{n}}$.  Since $0\approx x_{0}>_{\R}0$ the standard function $f_{2}(x):=\frac{1}{|x|+x_{0}}$ is clearly well-defined and continuous (as in $f_{2}\in C([0,1])$).  However, $f_{2}(x_{0})=\frac{1}{2x_{0}}\not\approx \frac{1}{x_{0}}=_{\R} f_{2}(0)$ implies that $f_{2}$ is not nonstandard continuous.  This contradiction yields $\paai$, and we are done.        
\end{proof}
Recall the definition of `modulus-of-continuity-functional' from Definition \ref{mikeh}. 
\begin{cor}\label{idare}
From the proof that $\P_{0}\vdash \NSC_{1}\di \paai$, a term $t$ can be extracted such that $\RCAo\vdash (\forall \Xi^{3})\big(\MPC(\Xi)\di \MU(t(\Xi)))$
\end{cor}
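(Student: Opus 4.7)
The plan is to apply term extraction (Corollary~\ref{consresultcor2}) to the proof of $\P_0 \vdash \NSC_1 \to \paai$ from Theorem~\ref{hark2}, following Template~\ref{tempe}. The key observation is that a modulus-of-continuity functional $\Xi$ as in $\MPC(\Xi)$ is precisely the Skolem functional of the normal form of $\NSC_1$, while $\mu$ as in $\MU(\mu)$ is the Skolem functional of the normal form of $\paai$; hence term extraction applied to the implication will yield an effective procedure converting the former into the latter.

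First I would bring both sides into normal form. Resolving ``$\approx$'' in the consequent of $\NSC_1$ via underspill (Theorem~\ref{doppi}) and pulling standard quantifiers outside, $\NSC_1$ becomes equivalent to
\[
(\forall^{\st} f^{1\to 1}, x^1, k^0)(\exists^{\st} N^0)\Big[f\in C([0,1]) \to (\forall y\in [0,1])\big(|x-y|<\tfrac{1}{N} \to |f(x)-f(y)|<\tfrac{1}{k}\big)\Big],
\]
whose Skolem functional, obtained via $\HAC_{\INT}$ and the max-trick of Remark~\ref{simply}, is exactly $\Xi^3$ satisfying $\MPC(\Xi)$. Similarly, as in the proof of Theorem~\ref{klabaka}, $\paai$ has the normal form $(\forall^{\st}f^1)(\exists^{\st}m^0)[(\exists n)f(n)=0 \to (\exists i\leq m) f(i)=0]$, with Skolem functional a $\mu^2$ as in $\MU(\mu)$. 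Introducing a standard variable $\Xi$ for the antecedent's Skolem functional and using that $\MPC(\Xi)$ is internal and independent of $m$, the implication $\NSC_1 \to \paai$ reduces in $\P_0$ to the single normal form
\[
(\forall^{\st}\Xi^3, f^1)(\exists^{\st} m^0)\big[\MPC(\Xi)\to A(f,m)\big],
\]
where $A(f,m) \equiv [(\exists n)f(n)=0 \to (\exists i\leq m)f(i)=0]$.

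Finally, Corollary~\ref{consresultcor2} extracts a closed term $s\in\mathcal{T}^*$ such that $\RCAo$ proves $(\forall \Xi,f)(\exists m \in s(\Xi,f))[\MPC(\Xi) \to A(f,m)]$. Since $A(f,m)$ is monotone in $m$, setting $t(\Xi)(f) := \max_{i<|s(\Xi,f)|} s(\Xi,f)(i)$ gives the required term with $\RCAo \vdash (\forall \Xi)[\MPC(\Xi) \to \MU(t(\Xi))]$. The main obstacle is verifying that the indirect, contradiction-based argument of Theorem~\ref{hark2} --- in particular, the construction of the discontinuous witness $f_1$ via the hat function $\mathbb{b}$ --- can be refactored so as to yield the single normal form displayed above; once this is confirmed, the remainder is a routine instance of Template~\ref{tempe}.
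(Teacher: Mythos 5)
Your proposal is correct and follows essentially the same route as the paper: bring $\NSC_{1}$ and $\paai$ into normal form (the paper pulls $(\exists^{\st}N)$ past $(\forall y\in[0,1])$ via \emph{Idealisation}, but your underspill-plus-monotonicity variant works too), Skolemize the antecedent into a standard modulus-of-continuity functional, combine everything into a single normal form $(\forall^{\st}\Xi,f)(\exists^{\st}m)[\dots]$, and apply term extraction with the max-trick of Remark \ref{simply}. The ``obstacle'' you flag at the end is not one: the normal form is provably \emph{equivalent} to $\NSC_{1}\di\paai$ inside the classical system $\P_{0}$, so any proof of the implication (including the contradiction-based argument of Theorem \ref{hark2}) automatically yields a proof of the normal form, and no refactoring of that argument is required.
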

\begin{proof}
A normal form for $\paai$ is given by \eqref{geluklkt}, where we use $A(f, n)$ to denote the formula in square brackets.  
A normal form for nonstandard pointwise continuity \eqref{soareyou3} is obtained as follows.  
Resolving `$\approx$' in \eqref{soareyou3}, we obtain  
\[\textstyle
(\forall^{\st}x\in [0,1])(\forall y\in [0,1])\big( (\forall^{\st}N) (|x- y|<\frac{1}{N}) \di (\forall^{\st}k )(|f(x)- f(y)|<\frac{1}{k})\big).  
\]
We may bring out the `$(\forall^{\st}k)$' and `$(\forall^{\st}N) $' quantifiers as follows:
\[\textstyle
(\forall^{\st}x\in [0,1])(\forall^{\st}k)\underline{(\forall y\in [0,1])(\exists^{\st}N)\big( |x- y|<\frac{1}{N} \di|f(x)- f(y)|<\frac{1}{k}\big)}.  
\]
Applying \emph{Idealisation} \textsf{I} to the underlined formula, we obtain a standard $z^{0^{*}}$ such that $(\forall y\in [0,1])(\exists N\in z)$ in the previous formula.  
Now let $N_{0}$ be the maximum of all numbers in $z$, and note that for $N=N_{0}$, we have the following:
\[\textstyle
(\forall^{\st}x\in [0,1])(\forall^{\st}k)(\exists^{\st}N)(\forall y\in [0,1])\big( |x- y|<\frac{1}{N} \di|f(x)- f(y)|<\frac{1}{k}\big), 
\]
abbreviated by $(\forall^{\st}x\in [0,1],k)(\exists^{\st}N)B(x, k, N, f)$.  Hence, $\NSC_{1}\di \paai$ is  
\[
(\forall^{\st}g\in C([0,1]))(\forall^{\st}x\in [0,1], k)(\exists^{\st}N)B(x, k, N, g)\di (\forall^{\st} f^{1})(\exists^{\st}n^{0})A(f,n), 
\]
which implies (since standard functionals have standard output for standard input):
\[
(\forall^{\st}\Psi)\big[(\forall^{\st}g\in C([0,1]))(\forall^{\st}x\in [0,1], k)B(x, k, \Psi(x, k, g), g)\di (\forall^{\st} f^{1})(\exists^{\st}n^{0})A(f,n)\big], 
\]
and dropping the `st' in the antecedent, we obtain:
\[
(\forall^{\st}\Psi)\big[(\forall g\in C([0,1]), x\in [0,1], k)B(x, k, \Psi(x, k, g), g)\di (\forall^{\st} f^{1})(\exists^{\st}n^{0})A(f,n)\big], 
\]
and bringing the standard quantifiers up front, we finally have
\be\label{krefke}
(\forall^{\st}\Psi, f)(\exists^{\st}n)\big[(\forall g\in C([0,1]), x\in [0,1], k)B(x, k, \Psi(x, k, g), g)\di A(f,n)\big].
\ee
Applying Theorem \ref{consresultcor} to `$\P_{0}\vdash \eqref{krefke}$', we obtain a term $t$ such that
\[
(\forall \Psi, f)(\exists n\in t(\Psi, f))\big[(\forall g\in C([0,1]), x\in [0,1], k)B(x, k, \Psi(x, k, g), g)\di A(f,n)\big].
\]
which is exactly as required in light of the definition of $A, B$ and Remark \ref{simply}.
\end{proof}
By the previous theorem and corollary, $\NSC_{1}$ translates to the existence of a modulus-of-continuity-functional when applying Theorem \ref{consresultcor}.  
Such a functional is fundamentally non-constructive by the corollary, and this non-constructiveness `trickles down' to any nonstandard theorem of $\P_{0}+\NSC_{1}$ as follows.
\begin{cor}\label{idare56} Let $\varphi$ be internal.
From $\P_{0}+ \NSC_{1}\vdash (\forall^{\st}x)(\exists^{\st}y)\varphi(x, y)$, a term $t$ can be extracted such that $\RCAo\vdash (\forall \Xi^{3})\big(\MPC(\Xi)\di  (\forall x)(\exists y\in t(x, \Xi))\varphi(x, y)  \big)$.
\end{cor}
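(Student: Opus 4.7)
The plan is to follow Template~\ref{tempe}, with the additional preliminary step of Herbrandising $\NSC_{1}$ into a normal form before invoking Theorem~\ref{consresultcor}. By the deduction theorem, the hypothesis becomes
\[
\P_{0}\vdash \NSC_{1}\di (\forall^{\st}x)(\exists^{\st}y)\varphi(x,y).
\]

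The key step is to replace $\NSC_{1}$ in the antecedent by the existence of a standard modulus-of-continuity functional. Exactly as in the proof of Corollary~\ref{idare}, one resolves `$\approx$' in \eqref{soareyou3} and applies \emph{Idealisation} \textsf{I} to the inner $(\forall y\in[0,1])$ quantifier to bring nonstandard pointwise continuity into the form
\[
(\forall^{\st} g\in C([0,1]))(\forall^{\st} x\in[0,1], k)(\exists^{\st} N)\, B(x,k,N,g),
\]
with $B$ internal. Since standard functionals yield standard output on standard input, any standard $\Psi$ satisfying $(\forall^{\st} g, x, k)B(x,k,\Psi(x,k,g),g)$ --- or, a fortiori, the fully internal version obtained by dropping `st' --- already suffices to derive $\NSC_{1}$. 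Substituting into the implication, strengthening by dropping `st' in the antecedent, and pushing all standard quantifiers to the front in the style of the proof of Corollary~\ref{idare}, one arrives at
\[
\P_{0}\vdash (\forall^{\st}\Psi, x)(\exists^{\st}y)\big[(\forall g\in C([0,1]), x', k)B(x',k,\Psi(x',k,g),g) \di \varphi(x,y)\big],
\]
a normal form with internal matrix.

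Theorem~\ref{consresultcor} then extracts a term $s$ such that $\RCAo$ proves the same statement with $(\exists^{\st}y)$ replaced by $(\exists y\in s(\Psi,x))$. Finally, given any $\Xi^{3}$ with $\MPC(\Xi)$, one reads off from $\Xi$ a functional $\Psi_{\Xi}$ of the correct type (essentially currying $\Xi(g)$), which satisfies the internal antecedent by the very definition of $\MPC$; setting $t(x,\Xi):=s(\Psi_{\Xi},x)$, and taking the maximum over the resulting finite sequence if needed (Remark~\ref{simply}), yields the required term. The main obstacle I anticipate is the bookkeeping in the conversion $\Xi\mapsto\Psi_{\Xi}$ together with the manipulation bringing standard quantifiers to the front over the internalised antecedent; both are entirely routine and were already carried out in the proof of Corollary~\ref{idare}, so no new ideas beyond the template should be required.
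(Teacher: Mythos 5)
Your proposal is correct and follows exactly the route the paper intends: the paper's own proof is just the remark ``analogous to the proof of Corollary \ref{idare} following Template \ref{tempe}'', and your write-up fills in precisely those steps (normal form for $\NSC_{1}$ via the standard modulus $\Psi$, dropping `st' in the antecedent, pulling standard quantifiers to the front, term extraction via Theorem \ref{consresultcor}, and converting $\Xi$ with $\MPC(\Xi)$ into $\Psi_{\Xi}$). No discrepancies.
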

\begin{proof}
Analogous to the proof of the previous corollary following Template \ref{tempe}  
\end{proof}
The previous theorem and corollaries imply that the step from $\eps$-$\delta$ continuity (relative to `st' or not) to the nonstandard variety always involves a non-trivial instance of \emph{Transfer}, which is fundamentally non-constructive.  In particular, by Corollary \ref{idare56}, any result proved using $\NSC_{1}$ only provides computational information \emph{involving a non-constructive modulus-of-continuity-functional}.  
In general, moving from the standard into the nonstandard world is highly non-constructive (requiring \emph{Transfer}), as was sketched in Section \ref{intro} in the form of Osswald's \emph{local constructivity}.     
Nonetheless, Wattenberg freely uses \emph{Transfer} and the equivalence between `$\eps$-$\delta$' and nonstandard continuity in \cite{watje}*{II-III}.  
This aspect of his investigation into the computational content of Nonstandard Analysis thus seems incorrect.  

\medskip

%
%
Furthermore, as suggested by its proof, Theorem \ref{hark2} goes through for other notions besides continuity.  
We now show, for differentiability and Riemann integration, that the step from the $\eps$-$\delta$ definition to the nonstandard one yields $\paai$.  

\medskip

First of all, we have the usual definition of differentiability.  
\bdefi A function $f$ is \emph{nonstandard differentiable} at $a$ if
\begin{equation}\label{soareyou22}\textstyle
(\forall \eps, \eps' \ne0)\big(\eps, \eps'\approx 0 \di \frac{f(a+\eps)-f(a)}{\eps}\approx \frac{f(a+\eps')-f(a)}{\eps'}\big).
\end{equation}
A function $f$ is \emph{differentiable} at $a$ if
\begin{equation}\label{soareyou222}\textstyle
(\forall k^{0})(\exists N^{0})(\forall \eps, \eps')\big(0<|\eps|, |\eps'|<\frac{1}{N} \di
 \big|\frac{f(a+\eps)-f(a)}{\eps}- \frac{f(a+\eps')-f(a)}{\eps'}\big|<\frac1k\big).
\end{equation}
A `modulus of differentiability at $a$' is a function $g^{1}$ such that $g(k)$ is $N^{0}$ in \eqref{soareyou222}.  
\edefi
Let $\NSD$ be the statement \emph{any standard $f:\R\di \R$ differentiable at zero is also {nonstandard differentiable} there}.  
Now, $\NSD$ is a theorem of $\IST$ but we also have the following implication.  
\begin{thm}\label{hark3} 
The system $\P+\NSD$ proves $\paai$.  
\end{thm}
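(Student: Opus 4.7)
The plan is to mimic the second part of Theorem~\ref{hark2}: assuming $\neg\paai$, exhibit a \emph{standard} $f:\R\di \R$ that is classically differentiable at $0$ in the sense of \eqref{soareyou222} yet not nonstandard differentiable there, contradicting $\NSD$. From $\neg\paai$ fix a standard $h_{0}^{1}$ with $(\forall^{\st}n)h_{0}(n)=0$ and $(\exists m_{0})h_{0}(m_{0})\neq 0$; as in Theorem~\ref{hark2}, composing with $\b$ we may assume $h_{0}$ is $\{0,1\}$-valued. Set $x_{0}:=\sum_{n=0}^{\infty}h_{0}(n)/2^{n}$, a standard real with $x_{0}>_{\R}0$ and $x_{0}\approx 0$ (since its first nonzero binary digit sits at a nonstandard position).

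The candidate is the standard function
\[
f(x):=\frac{x^{2}}{|x|+x_{0}},
\]
well-defined on $\R$ with $f(0)=0$; for $\eps\neq 0$ the difference quotient is $D(\eps):=f(\eps)/\eps=\eps/(|\eps|+x_{0})$, so $|D(\eps)|\leq |\eps|/x_{0}$. To verify \eqref{soareyou222} at $a=0$, given $k^{0}$ I pick any $N^{0}$ with $N\geq 2k/x_{0}$ (such $N$ exists in $\P$ by the Archimedean property, and is necessarily nonstandard); then for $0<|\eps|,|\eps'|<1/N\leq x_{0}/(2k)$ one has $|D(\eps)|,|D(\eps')|\leq 1/(2k)$ and hence $|D(\eps)-D(\eps')|<1/k$. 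To refute nonstandard differentiability at $0$, take the infinitesimals $\eps=x_{0}$ and $\eps'=2x_{0}$: $D(x_{0})=\tfrac12$ and $D(2x_{0})=\tfrac23$, so $|D(x_{0})-D(2x_{0})|=\tfrac16\not\approx 0$. Applying $\NSD$ to $f$ then contradicts the latter, yielding $\paai$.

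The only conceptual subtlety, exactly parallel to Theorem~\ref{hark2}, is that the internal witness $N$ in \eqref{soareyou222} is \emph{necessarily nonstandard}: for every standard $N$ both $x_{0}$ and $2x_{0}$ lie in $(-1/N,1/N)$, and $D$ jumps by $\tfrac16$ across them, so no standard $N$ works for $k\geq 7$. This is precisely the $\paai$-shaped gap that $\NSD$ is asked to bridge, so no further wrinkles are expected; the only routine verification is the existence of the nonstandard $N\geq 2k/x_{0}$ as an element of $\N$ in $\P$, which is immediate from $x_{0}>_{\R}0$.
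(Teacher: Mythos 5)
Your proof is correct and follows the same strategy as the paper's: from $\neg\paai$ one builds the standard infinitesimal $x_{0}>_{\R}0$ and then a standard function that is internally differentiable at $0$ yet whose difference quotients at suitable infinitesimal arguments are not infinitely close, contradicting $\NSD$. The only difference is the witness function --- the paper uses $e^{1/(x^{2}+x_{0})}$ where you use $x^{2}/(|x|+x_{0})$ --- and your choice makes both verifications (the explicit nonstandard $N\geq 2k/x_{0}$ witnessing internal differentiability, and the $\tfrac12$ versus $\tfrac23$ computation refuting the nonstandard version) rather more transparent than the paper's.
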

\begin{proof}
Working in $\P+\NSD$, suppose we have $\neg\paai$, i.e.\ there is standard $h_{0}^{1}$ such that $(\forall^{\st}n)h_{0}(n)=0$ and $(\exists m_{0})h(m_{0})\ne0$.  
Define the standard real $x_{0}$ as in the proof of Theorem \ref{hark2}.  Since $0\approx x_{0}>_{\R}0$ the standard function $f_{0}(x):=e^{\frac{1}{x^{2}+x_{0}}}$ is well-defined and differentiable in the usual internal `epsilon-delta' sense.  However, 
\[\textstyle
\frac{f_{0}(\sqrt{x_{0}})-f_{0}(0)}{\sqrt{x_{0}}}= \frac{e^{\frac{1}{2x_{0}}}-e^{\frac{1}{x_{0}}}}{\sqrt{x_{0}}}= e^{\frac{1}{x_{0}}}\frac{e^{\frac{1}{x_{0}}}-1}{\sqrt{x_{0}}}\gg 0 \gg  e^{\frac{1}{x_{0}}}\frac{e^{\frac{1}{x_{0}}}-1}{-\sqrt{x_{0}}}  =\frac{e^{\frac{1}{2x_{0}}}-e^{\frac{1}{x_{0}}}}{-\sqrt{x_{0}}}=\frac{f_{0}(-\sqrt{x_{0}})-f_{0}(0)}{-\sqrt{x_{0}}}
\]
which implies that $f_{0}$ is not nonstandard differentiable at zero.  
This contradiction yields $\paai$, and we are done.        
\end{proof}
Let $\textsf{DIF}(\Xi)$ be the statement that $\Xi(f)$ is a modulus for differentiability at zero for every $f$ differentiable at zero.  
\begin{cor}\label{idarecor}
From the proof that $\P\vdash \NSD\di \paai$, a term $t$ can be extracted such that $\textsf{\textup{E-PA}}^{\omega}\vdash (\forall \Xi^{3})\big(\textsf{\textup{DIF}}(\Xi)\di \MU(t(\Xi)))$
\end{cor}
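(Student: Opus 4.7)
The plan is to mirror the proof of Corollary~\ref{idare} verbatim, replacing nonstandard continuity by nonstandard differentiability at zero; the parallel is natural because Theorem~\ref{hark3} reflects essentially the same non-constructive obstruction as Theorem~\ref{hark2}.

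First, I would extract a normal form for `$f$ nonstandard differentiable at $0$' (formula \eqref{soareyou22} at $a=0$). Resolving the two occurrences of `$\approx$' in terms of standard natural-number bounds, pulling the resulting `$(\forall^{\st}k)$' to the front, and applying \emph{Idealisation} \textsf{I} together with the maximum-of-finite-sequence trick from Corollary~\ref{idare} to the `$(\forall \eps,\eps')$' quantifier delivers a normal form $(\forall^{\st}k)(\exists^{\st}N)\,D(k,N,f)$ with $D$ internal, where the internal differentiability statement \eqref{soareyou222} has the form $(\forall k)(\exists N)\,D(k,N,f)$ for the same $D$. Combining with the normal form \eqref{geluklkt} of $\paai$ — whose internal matrix I abbreviate $A(h,m)$ — the implication $\NSD \di \paai$ rewrites as $(\forall^{\st}f)[(\forall k)(\exists N)D(k,N,f)\di(\forall^{\st}k)(\exists^{\st}N)D(k,N,f)]\di(\forall^{\st}h)(\exists^{\st}m)A(h,m)$.

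Second, following Template~\ref{tempe}, I would replace the inner `$(\exists^{\st}N)$' by a standard Skolem functional $\Xi(f)(k)$ (using that standard functionals have standard output on standard input), drop the remaining `$\st$' annotations in the antecedent to strengthen it, and pull the outer standard quantifiers to the front, arriving at a normal form
\begin{equation}\label{krefkedif}
(\forall^{\st}\Xi,h)(\exists^{\st}m)\big[(\forall f)\big((\forall k')(\exists N')D(k',N',f)\di (\forall k)D(k,\Xi(f)(k),f)\big)\di A(h,m)\big]
\end{equation}
whose antecedent is exactly $\textsf{\textup{DIF}}(\Xi)$.

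Finally, applying Theorem~\ref{consresultcor} to `$\P\vdash\eqref{krefkedif}$' and maximising over the resulting finite sequence as per Remark~\ref{simply}, I would extract a term $t$ such that $\textsf{E-PA}^{\omega}$ proves $\textsf{\textup{DIF}}(\Xi)\di \MU(t(\Xi))$. The only non-routine ingredient is the opening Idealisation move on the `$(\forall \eps,\eps')$' quantifier, but this is handled verbatim as in Corollary~\ref{idare}, so no genuine obstacle arises.
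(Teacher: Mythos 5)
Your proposal is correct and follows essentially the same route as the paper: the paper's own proof consists of exactly the normal form for nonstandard differentiability that you derive (obtained via resolving `$\approx$', classical logic, and \emph{Idealisation} with the maximum trick) and then declares the rest ``straightforward and analogous to the proof of Corollary \ref{idare}''. Your spelled-out steps — Skolemising the inner $(\exists^{\st}N)$ by a standard $\Xi$, dropping `st' in the antecedent so that it becomes literally $\textsf{DIF}(\Xi)$, prenexing the standard quantifiers, and applying Theorem \ref{consresultcor} with Remark \ref{simply} — are precisely the intended instantiation of Template \ref{tempe}.
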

\begin{proof}
A normal form for differentiability as in \eqref{soareyou22} is easy to obtain and as follows:  
\[\textstyle
(\forall^{\st}k^{0})(\exists^{\st} N^{0})(\forall \eps, \eps' \ne0)\big(|\eps|, |\eps'| <\frac{1}{N} \di \left|\frac{f(a+\eps)-f(a)}{\eps}- \frac{f(a+\eps')-f(a)}{\eps'}\right|<\frac{1}{k}\big),
\]
The proof is straightforward and analogous to the proof of Corollary \ref{idare}.  
\end{proof}
Hence, switching from epsilon-delta differentiability to the nonstandard variety as in $\NSD$ is at least as non-constructive as $(\mu^{2})$ and $\NSC_{1}$.  
One readily obtains a version of $\NSC_{2}\di \paai$ for $\NSD$, i.e.\ for $\eps$-$\delta$-differentiability relative to `st'.  

\medskip

%
Next, we consider the usual definitions of Riemann integration.  
\bdefi[Riemann Integration]\label{kunko}~
\begin{enumerate}
\item A \emph{partition} of $[0,1]$ is an increasing sequence $\pi=(0, t_{0}, x_{1},t_{1},  \dots,x_{M-1}, t_{M-1}, 1)$.  We write `$\pi \in P([0,1]) $' to denote that $\pi$ is such a partition.
\item For $\pi\in P([0,1])$, $\|\pi\|$ is the \emph{mesh}, i.e.\ the largest distance between two adjacent partition points $x_{i}$ and $x_{i+1}$. 
\item For $\pi\in P([0,1])$ and $f:\R\di \R$, the real $S_{\pi}(f):=\sum_{i=0}^{M-1}f(t_{i}) (x_{i+1}-x_{i}) $ is the \emph{Riemann sum} of $f$ and $\pi$.  
\item A function $f$ is \emph{nonstandard integrable} on $[0,1]$ if
\be\label{soareyou5}
(\forall \pi, \pi' \in P([0,1]))\big[\|\pi\|,\| \pi'\|\approx 0  \di S_{\pi}(f)\approx S_{\pi'}(f)  \big].
\ee
\item A function $f$ is \emph{integrable} on $[0,1]$ if
\be\label{soareyou6}\textstyle
(\forall k^{0})(\exists N^{0})(\forall \pi, \rho \in P([0,1]))\big[\|\pi\|,\| \rho\|<\frac{1}{N}  \di |S_{\pi}(f)- S_{\rho}(f)|<\frac{1}{k}  \big].
\ee
A modulus of (Riemann) integration $\omega^{1}$ provides $N=\omega(k)$ as in \eqref{soareyou6}.  
\end{enumerate}
\edefi
Let $\NSR$ be the statement \emph{a standard $f:\R\di \R$ integrable on the unit interval is also \emph{nonstandard integrable} there}.  
As above, $\NSR$ is a theorem of $\IST$ but we also have the following implication.  
\begin{thm}\label{hark38} 
The system $\P+\NSR$ proves $\paai$.  
\end{thm}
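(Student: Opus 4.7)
The plan mirrors Theorems \ref{hark2}--\ref{hark3}: working in $\P+\NSR$, I would assume $\neg\paai$ for contradiction, obtaining standard $h_{0}^{1}$ with $(\forall^{\st}n)h_{0}(n)=0$ and $(\exists m_{0})h_{0}(m_{0})\ne 0$, and form the standard real $x_{0}:=\sum_{n=0}^{\infty}\frac{\b(h_{0}(n))}{2^{n}}$ (with $\b$ clipping to $\{0,1\}$), a positive infinitesimal satisfying $0<2^{-m_{0}}\leq x_{0}\leq 2^{-n}$ for every standard $n$. The goal is then to produce a standard $f\colon[0,1]\to\R$ that is internally Riemann integrable (so $\NSR$ applies) yet admits two partitions of common infinitesimal mesh whose Riemann sums differ noninfinitesimally.

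My candidate is the triangular spike
\[
f(x):=\max\bigl(0,\ \tfrac{1}{x_{0}}(1-\tfrac{|x-x_{0}|}{x_{0}^{2}})\bigr),
\]
a standard continuous piecewise-linear function supported on $[x_{0}-x_{0}^{2},\, x_{0}+x_{0}^{2}]$ with peak $1/x_{0}$, integral $x_{0}$, and Lipschitz constant $1/x_{0}^{3}$ on its support. Counting the $O(x_{0}^{2}/\|\pi\|+1)$ subintervals of $\pi$ that meet the support and bounding each per-subinterval Riemann-sum error by the oscillation $\min(1/x_{0},\ \|\pi\|/x_{0}^{3})$ times $\|\pi\|$ yields an internal estimate $|S_{\pi}(f)-x_{0}|\leq C\|\pi\|/x_{0}$ valid for all $\pi$, so \eqref{soareyou6} holds for $f$ with internal modulus $\omega(k)=\lceil Ck/x_{0}\rceil$. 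This modulus is nonstandard, but the quantifier $(\exists N)$ in \eqref{soareyou6} is internal, so there is no problem: $\NSR$ hands us nonstandard integrability \eqref{soareyou5} for $f$.

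To contradict this I would set $M:=\lceil 1/\sqrt{x_{0}}\rceil$, a nonstandard natural number since $\sqrt{x_{0}}$ is a positive infinitesimal, and take two partitions sharing the uniform subdivision $x_{i}:=i/M$ (so mesh $1/M\approx\sqrt{x_{0}}\approx 0$), differing only at the first sample point: in $\pi_{1}$ pick $t_{0}:=x_{0}$ (at the peak, so $f(t_{0})=1/x_{0}$) and in $\pi_{2}$ pick $t_{0}:=x_{0}+2x_{0}^{2}$ (outside the spike, so $f(t_{0})=0$). For $i\geq 1$ one has $t_{i}\geq i/M$ with $1/M\approx\sqrt{x_{0}}\gg x_{0}+x_{0}^{2}$, whence $f(t_{i})=0$. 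Thus $S_{\pi_{1}}(f)=\frac{1}{x_{0}M}\approx\frac{1}{\sqrt{x_{0}}}$ is infinite while $S_{\pi_{2}}(f)=0$, contradicting \eqref{soareyou5}, so $\paai$ follows.

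The principal obstacle is calibrating the spike so that both requirements coexist: internal Riemann integrability favours a smooth, narrow spike with finite Lipschitz constant (a jump discontinuity of height $1/x_{0}$ would not admit a single internal modulus), while sensitivity of Riemann sums to relocation of one sample point demands that the spike be narrow relative to the mesh yet tall enough to shift the sum noninfinitesimally. The scaling ``support $\sim x_{0}^{2}$, height $\sim 1/x_{0}$, mesh $\sim\sqrt{x_{0}}$'' threads this needle: the integral $x_{0}$ and internal modulus $\sim k/x_{0}$ are well-defined internally, while any subinterval of width $\sqrt{x_{0}}\gg x_{0}^{2}$ fully contains the spike and admits a sample point that either hits or misses the peak, changing the sum by $1/\sqrt{x_{0}}$.
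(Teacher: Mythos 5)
Your proof is correct and follows the same strategy as the paper's: assume $\neg\paai$, form the positive infinitesimal standard real $x_0$, and exhibit a standard function that is internally Riemann integrable as in \eqref{soareyou6} (with a nonstandard internal modulus, which is permissible since the quantifiers there are internal) yet admits two tagged partitions of infinitesimal mesh whose Riemann sums differ non-infinitesimally, contradicting \eqref{soareyou5}. The only difference is the witness: the paper simply reuses $f_{0}(x)=e^{\frac{1}{x^{2}+x_{0}}}$ from Theorem \ref{hark3} and notes that moving the tag $0$ to $\sqrt{x_{0}}$ shifts the sum by more than an infinitesimal, whereas you construct a calibrated triangular spike and supply the quantitative estimates the paper leaves implicit --- both are valid.
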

\begin{proof}
Suppose $\NSR\wedge \neg \paai$ and note that $f_{0}$ from Theorem \ref{hark3} is Riemann integrable.  However, since the distance between $f_{0}(0)=e^{\frac{1}{x_{0}}}$ and $f_{0}(\sqrt{x_{0}})=e^{\frac{1}{2x_{0}}}$ is larger than any standard real, replacing $0$ by $\sqrt{x_{0}}$ in a partition causes the associated Riemann sums to be 
apart by more than an infinitesimal. \
\end{proof}
Let $\textsf{RIE}(\kappa)$ be the statement that $\kappa(f)$ is a modulus of Riemann integration for every $f:\R\di \R$ integrable on the unit interval.  
\begin{cor}\label{idare22}
From the proof that $\P\vdash \NSR\di \paai$, a term $t$ can be extracted such that $\textsf{\textup{E-PA}}^{\omega}\vdash (\forall \kappa^{3})\big(\textsf{\textup{RIE}}(\kappa)\di \MU(t(\kappa)))$
\end{cor}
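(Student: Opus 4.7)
The plan is to mimic exactly the strategy used for Corollaries \ref{idare} and \ref{idarecor}, following Template \ref{tempe}. First, I would put nonstandard Riemann integrability \eqref{soareyou5} into a normal form. Resolving the two occurrences of ``$\approx$'' in \eqref{soareyou5}, one gets
\[\textstyle
(\forall \pi,\pi'\in P([0,1]))\bigl[(\forall^{\st}N)(\|\pi\|,\|\pi'\|<\tfrac{1}{N})\di (\forall^{\st}k)(|S_{\pi}(f)-S_{\pi'}(f)|<\tfrac{1}{k})\bigr].
\]
Pulling out the $(\forall^{\st}k)$ and pushing the $(\exists^{\st}N)$ into the matrix, then applying \emph{Idealisation} \textsf{I} (with the maximum trick of Remark \ref{simply}) exactly as in the derivation of $(\forall^{\st}x,k)(\exists^{\st}N)B(x,k,N,f)$ in the proof of Corollary~\ref{idare}, yields a normal form
\[\textstyle
(\forall^{\st} k^{0})(\exists^{\st}N^{0})(\forall \pi,\pi'\in P([0,1]))\bigl[\|\pi\|,\|\pi'\|<\tfrac{1}{N}\di |S_{\pi}(f)-S_{\pi'}(f)|<\tfrac{1}{k}\bigr],
\]
which we abbreviate by $(\forall^{\st}k)(\exists^{\st}N)D(k,N,f)$.

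Secondly, recall the normal form \eqref{geluklkt} of $\paai$, written $(\forall^{\st}f^{1})(\exists^{\st}n^{0})A(f,n)$. Then $\NSR\di \paai$ becomes
\[
(\forall^{\st}g \textup{ integrable})(\forall^{\st}k)(\exists^{\st}N)D(k,N,g)\di (\forall^{\st}f^{1})(\exists^{\st}n^{0})A(f,n).
\]
Since standard functionals yield standard output on standard input, the antecedent is implied by $(\forall^{\st}\kappa)(\forall^{\st}g\textup{ integrable},k)D(k,\kappa(g,k),g)$; dropping ``st'' in the antecedent (step (ii.a) of Template~\ref{tempe}) and bringing the standard quantifiers up front exactly as in the passage from \eqref{kakoo} to \eqref{krefke}, we obtain
\begin{equation}\label{planned}
(\forall^{\st}\kappa^{3},f^{1})(\exists^{\st}n^{0})\bigl[(\forall g\textup{ integrable},k)D(k,\kappa(g,k),g)\di A(f,n)\bigr].
\end{equation}

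Thirdly, I would apply Theorem~\ref{consresultcor} to ``$\P\vdash \eqref{planned}$'' to extract a closed term $s$ with
\[
(\forall \kappa,f)(\exists n\in s(\kappa,f))\bigl[(\forall g\textup{ integrable},k)D(k,\kappa(g,k),g)\di A(f,n)\bigr],
\]
and then set $t(\kappa)(f):=\max_{i<|s(\kappa,f)|}s(\kappa,f)(i)$ (the routine maximum step from Remark~\ref{simply}). By definition of $\textsf{RIE}$ and unwinding $A,D$, this $t$ witnesses $\MU(t(\kappa))$ from $\textsf{RIE}(\kappa)$ in $\textsf{E-PA}^{\omega}$, as required.

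I do not expect a genuine obstacle: every step is by now a mechanical application of the template. The one delicate point is ensuring that the class of Riemann integrable functions and the maps $\pi\mapsto S_{\pi}(f)$, $\pi\mapsto\|\pi\|$ are expressible in the internal language in the form demanded by the term extraction theorem, but this is handled exactly as for continuity in Corollary~\ref{idare} (whose $g\in C([0,1])$ hypothesis plays the role of our ``$g$ integrable''). Everything else is the same sequence of quantifier manipulations.
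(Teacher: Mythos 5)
Your proposal is correct and is exactly the argument the paper intends: the paper's own proof of this corollary is just the one-line remark that it is ``straightforward and analogous to that of Corollary \ref{idare}'', and your normal form for \eqref{soareyou5}, the introduction of the standard modulus functional $\kappa$, the dropping of ``st'' in the antecedent, and the final application of Theorem \ref{consresultcor} with the maximum trick of Remark \ref{simply} are precisely that analogous argument spelled out.
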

\begin{proof}
The proof is straightforward and analogous to that of Corollary \ref{idare}. 
\end{proof}
Hence, switching from epsilon-delta integrability to the nonstandard variety as in $\NSR$ is at least as non-constructive as $(\mu^{2})$ and $\NSC_{1}$. 
One readily obtains a version of $\NSC_{2}\di \paai$ for $\NSR$, i.e.\ for $\eps$-$\delta$-integrability relative to `st'.  
%

\medskip

In conclusion, Wattenberg's claim that Nonstandard Analysis has effective (even constructive) content is correct in light of Theorem \ref{hark}; his implementation using $\eps$-$\delta$ continuity and \emph{Transfer} is \emph{problematic} in light of Theorem~\ref{hark2}, but \emph{easily salvageable}: By Theorem \ref{hark} it suffices to just adopt nonstandard (rather than $\eps$-$\delta$) continuity, in line with Osswald's local constructivity.  
We next investigate Wattenberg's claims regarding the constructive status of \emph{Standard Part} in Section~\ref{bad}.

\subsection{The non-constructive status of Standard Part}\label{bad}
We investigate the constructive status of \emph{Standard Part} in light of Wattenberg's claims that it be fundamentally non-constructive.  

\medskip

First of all, we have the following theorem regarding the use of $\STP$, which is the only fragment of \emph{Standard Part} used by Wattenberg in \cite{watje}.  
\begin{thm}\label{daradal} Let $\varphi$ be internal.
From $\P_{0}+ \STP\vdash (\forall^{\st}x)(\exists^{\st}y)\varphi(x, y)$, a term $t$ can be extracted such that $\RCAo\vdash (\forall \Theta^{3})\big(\SCF(\Theta)\di  (\forall x)(\exists y\in t(x, \Theta))\varphi(x, y)  \big)$.
\end{thm}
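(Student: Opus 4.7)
The plan is to follow Template \ref{tempe} with $\STP$ replaced by its equivalent normal form \eqref{frukkklk}. Since $\STP$ is equivalent in $\P_{0}$ to \eqref{frukkklk}, which I abbreviate as $(\forall^{\st}g^{2})(\exists^{\st}w)D(g,w)$ with $D$ internal, the hypothesis of the theorem can be rewritten as
\[
\P_{0}\vdash (\forall^{\st}g)(\exists^{\st}w)D(g,w) \di (\forall^{\st}x)(\exists^{\st}y)\varphi(x,y).
\]
First I would apply $\HAC_{\INT}$ to the antecedent, in the manner described in Remark \ref{simply}, to introduce a standard witnessing functional $\Theta$ for $w$; the above implication then becomes equivalent to
\[
(\forall^{\st}\Theta)\big[(\forall^{\st}g)D(g,\Theta(g)) \di (\forall^{\st}x)(\exists^{\st}y)\varphi(x,y)\big].
\]

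Next, following steps (i)--(iii) of Template \ref{tempe}, I would drop the `st' on the inner $\forall g$ (a sound strengthening of the antecedent, since $(\forall g)\di(\forall^{\st}g)$), and then pull the remaining standard quantifiers on $x$ and $y$ to the front. This yields the normal form
\[
(\forall^{\st}\Theta,x)(\exists^{\st}y)\big[(\forall g)D(g,\Theta(g)) \di \varphi(x,y)\big].
\]
Applying Theorem \ref{consresultcor} (in the $\P_{0}$/$\RCAo$ version provided by Corollary \ref{consresultcor2}) would then extract a closed term $t\in\mathcal{T}^{*}$ such that $\RCAo$ proves
\[
(\forall \Theta,x)(\exists y\in t(\Theta,x))\big[(\forall g)D(g,\Theta(g)) \di \varphi(x,y)\big].
\]

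The final step is to recognise that the premise $(\forall g)D(g,\Theta(g))$ is precisely $\SCF(\Theta)$, after the obvious repackaging of $\Theta$'s output into the pair $(\Theta(g)(2),\Theta(g)(1))$ used in Section \ref{firstkol}. I expect the main obstacle to be purely bookkeeping at this stage: verifying that the internal matrix $D$ produced by the equivalence $\STP\Leftrightarrow\eqref{frukkklk}$ aligns literally with the defining predicate of the special fan functional. Since $\SCF(\Theta)$ was designed in Section \ref{firstkol} as exactly the result of applying term extraction to \eqref{frukkklk}, and Remark \ref{simply} already handles the step of replacing a finite sequence of bounds by its maximum, this matching should go through directly, and the conclusion follows after a harmless swap of the arguments of $t$.
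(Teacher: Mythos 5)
Your proposal is correct and follows essentially the same route as the paper, whose proof simply invokes the equivalence $\STP\asa\eqref{frukkklk}$ from Theorem \ref{klak} and then repeats the argument of Corollary \ref{idare} (i.e.\ Template \ref{tempe}); your write-up just makes those steps explicit. One small inaccuracy: the passage from the antecedent $(\forall^{\st}g)(\exists^{\st}w)D(g,w)$ to $(\forall^{\st}\Theta)\big[(\forall^{\st}g)D(g,\Theta(g))\di\dots\big]$ is the sound weakening justified by the basic axiom that standard functionals return standard output on standard input (exactly as in Corollary \ref{idare}), rather than an application of $\HAC_{\INT}$, which would only be needed for the converse direction that the argument never uses.
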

\begin{proof}
Note that $\STP$ is equivalent to \eqref{frukkklk} form Section \ref{firstkol} by Theorem \ref{klak}.  The theorem now follows easily by following the proof of Corollary \ref{idare}.    
\end{proof}
In light of the previous theorem, the use of $\STP$ in the proof of a nonstandard theorem translates to the presence of the \emph{special fan functional} $\Theta$ after applying Theorem \ref{consresultcor}.  
Given the computational hardness of $\Theta$, Wattenberg's claims regarding the non-constructive nature of Standard Part seem justified.  
However, as will be established in Section \ref{SSTP}, the fragment of \emph{Standard Part} used by Wattenberg (namely $\STP$ from Section \ref{firstkol}) does have plenty 
of effective content, though extra technical machinery is needed for this.  

\medskip

Secondly, we show that the generalisation of $\STP$ to type two functionals is non-constructive as it implies $(\exists^{2})$.   
In particular, the following rather weak fragment of \emph{Standard part} is established to be non-constructive by Theorem \ref{konky}. 
%
\be\tag{$\STP_{2}$}
(\forall Y^{2}\leq_{2}1)(\exists^{\st}Z^{2}\leq_{2}1)(Z\approx_{2} Y).
\ee  
Note that $(\textsf{\textup{E}})^{\st}_{n+2}$ results in a conservative extension of $\P_{0}$ as shown in \cite{bennosam}. 
\begin{thm}\label{konky}
The system $\P_{0}+(\textsf{\textup{E}})^{\st}_{2}+\STP_{2}$ proves $(\exists^{2})^{\st}$.  
\end{thm}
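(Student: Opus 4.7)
My plan is to show $\P_{0} + (\textsf{E})^{\st}_{2} + \STP_{2} \vdash \paai$; once this is done, Theorem~\ref{klabaka} immediately yields $(\mu^{2})^{\st}$, which is equivalent to $(\exists^{2})^{\st}$ via the routine correspondence between $(\mu^{2})$ and $(\exists^{2})$. I would argue by contradiction: assume $\neg\paai$, so there is a standard $f^{1}$ with $(\forall^{\st} n^{0})(f(n)\ne0)$ and $(\exists m^{0})(f(m)=0)$; fix any internal witness $M^{0}$ for the latter existential, which then necessarily lies in $\Omega$ since $f$ has no standard zero.

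The central construction manufactures two standard sequences that are $\approx_{1}$-equal but internally distinct, and exhibits a functional extracted from $\STP_{2}$ that separates them. Let $\mathbf{0}:=\lambda n.0$ and let $h^{1}$ be the sequence given by $h(n):=1$ if $f(n)=0$ and $h(n):=0$ otherwise; both are standard (since $f$ is). By the choice of $f$, $h(n)=0$ for every standard $n^{0}$, so $\mathbf{0}\approx_{1}h$, while $h(M)=1$ by construction. Next, define the internal functional $Y^{1\di 0}\leq_{2}1$ by $Y(g):=1$ if $(\exists n\leq M)(g(n)\ne 0)$ and $Y(g):=0$ otherwise; this is a bounded-search definition and so available in $\P_{0}$. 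Applying $\STP_{2}$ yields a standard $Z^{1\di 0}\leq_{2}1$ with $(\forall^{\st}g^{1})(Z(g)=_{0}Y(g))$. In particular, $Z(\mathbf{0})=Y(\mathbf{0})=0$ while $Z(h)=Y(h)=1$, because $h(M)=1$ and $M\leq M$.

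The contradiction comes from $(\textsf{E})^{\st}_{2}$. Unpacking the star-translation of $(\textsf{E})_{2}\equiv(\forall x^{1},y^{1},\varphi^{2})[x=_{1}y\di \varphi(x)=_{0}\varphi(y)]$ and noting that the universal $(\forall z^{0})$ hidden inside $=_{1}$ becomes $(\forall^{\st}z^{0})$, the axiom $(\textsf{E})^{\st}_{2}$ reads
\[
(\forall^{\st}\varphi^{2},x^{1},y^{1})\big[x\approx_{1}y\di\varphi(x)=_{0}\varphi(y)\big].
\]
Instantiating with the standard triple $Z,\mathbf{0},h$ and using $\mathbf{0}\approx_{1}h$ forces $Z(\mathbf{0})=Z(h)$, contradicting the values $0$ and $1$ computed above. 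Hence $\paai$ holds, and invoking Theorem~\ref{klabaka} completes the proof.

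The main obstacle I foresee is conceptual rather than computational: one has to recognise that $\approx_{1}$ is strictly weaker than $=_{1}$ precisely when $\paai$ fails (indeed $\mathbf{0}\approx_{1}h$ yet $\mathbf{0}\ne_{1}h$ because of the nonstandard position $M$), so $(\textsf{E})^{\st}_{2}$ is a substantive strengthening of $(\textsf{E})_{2}$ and is exactly what the coarse $\STP_{2}$-approximation cannot tolerate without collapsing $M\notin\Omega$. The only small routine checks are that $Y$ is $\{0,1\}$-valued by construction, and that one may use any internal witness to $(\exists m)(f(m)=0)$ as $M$ without needing the least such.
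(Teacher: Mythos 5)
Your proof is correct, but it takes a genuinely different route from the paper's. The paper gives two proofs, and both apply $\STP_{2}$ to an internally defined $0$-$1$-valued functional depending on a nonstandard parameter and then recognise the resulting standard functional as a witness to a known uniform principle --- uniform $\Sigma_{1}^{0}$-separation in the first proof, $\UWKL$ in the second --- whose implication to $(\exists^{2})$ (due to Sakamoto--Yamazaki, resp.\ Kohlenbach) is transported to the `st'-relativised setting via $(\textsf{E})^{\st}_{2}$. You instead use $(\textsf{E})^{\st}_{2}$ as the direct engine of a reductio: assuming $\neg\paai$ you manufacture standard sequences $\mathbf{0}\approx_{1}h$ with $h\ne_{1}\mathbf{0}$, let $\STP_{2}$ produce a standard $Z$ agreeing on standard inputs with the nonstandard bounded-search functional $Y$, and observe that $Z$ separates $\mathbf{0}$ from $h$, violating standard extensionality; this yields $\paai$ and hence $(\mu^{2})^{\st}$ and $(\exists^{2})^{\st}$ via Theorem \ref{klabaka}. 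Your route is more self-contained (no appeal to the Sakamoto--Yamazaki or Kohlenbach results), it isolates exactly where $(\textsf{E})^{\st}_{2}$ does work (the paper leaves this implicit inside the cited implications), and it establishes the intermediate fact $\P_{0}+(\textsf{E})^{\st}_{2}+\STP_{2}\vdash\paai$, which is of independent interest given the paper's closing remark that $(\textsf{E})^{\st}_{n+2}$ follows from \emph{Transfer}; what the paper's proofs buy in exchange is the explicit link between $\STP_{2}$ and uniform second-order principles. Two small housekeeping points: Theorem \ref{klabaka} is stated for $\P$, so you should note that the implication $\paai\di(\mu^{2})^{\st}$ only uses $\HAC_{\INT}$ and is therefore available in $\P_{0}$; and the passage from $(\mu^{2})^{\st}$ to $(\exists^{2})^{\st}$ requires the (easy but necessary) observation that $\mu(f)$ is standard for standard $\mu$ and $f$, so that the inner existential quantifier in $(\exists^{2})^{\st}$ is correctly witnessed by a standard number.
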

\begin{proof}
In a nutshell, we work relative to `st' in $\P_{0}+(\textsf{\textup{E}})^{\st}_{2}+\STP_{2}$ and define a functional which computes the separating set in $\Sigma_{1}^{0}$-separation (\cite{simpson2}*{I.11.7}).
The theorem then follows from the equivalence between the uniform version of $\Sigma_{1}^{0}$-separation and $(\exists^{2})$, as proved in \cite{yamayamaharehare}*{Theorem 3.6}.  

\medskip

Let $f_{1}, f_{2}$ be standard binary sequences and fix nonstandard $N^{0}$.  Let $K(n, f_{1}, f_{2})$ be the largest $ k^{0}\leq N$ such that $(\forall n_{1}, n_{2}\leq k)(f_{1}(n_{1}, n)\ne 0 \vee f_{2}(n_{2}, n)\ne 0)$, if such number exists, and zero otherwise.  Define $Y^{2}$ as follows:
\[
Y(f_{1}, f_{2}, n):=
\begin{cases}
1 &  (\exists n_{1}\leq K(n, f_{1}, f_{2}))(f_{1}(n_{1}, n) = 0 )  \\
0 & \textup{otherwise}
\end{cases}.
\]
Now suppose $(\forall^{\st}n^{0})(\neg\varphi^{\st}_{1}(n)\vee \neg\varphi^{\st}_{2}(n))$ where $\varphi_{i}(n)\equiv (\exists n_{i})(f(n_{i}, n)=0)$.  
By overspill, $K(n, f_{1}, f_{2})$ is nonstandard for all standard $n^{0}$.  
By definition, we have
\[
(\forall^{\st}n^{0})\big[ \varphi^{\st}_{1}(n)\di Y(f_{1}, f_{2}, n)= 1\wedge  \varphi^{\st}_{2}(n)\di Y(f_{1}, f_{2}, n)= 0  \big].
\]
Now apply $\STP_{2}$ to obtain standard $Z^{2}$ such that $Z\approx_{2} Y $.   Then $Z(f_{1}, f_{2}, n)$ is standard and provides the separating set from $\Sigma_{1}^{0}$-separation for standard inputs and relative to `st'.     
\end{proof}
We provide an alternative proof for Theorem \ref{konky} as follows.  
\begin{proof}
We prove that $\STP_{2}\di \UWKL^{\st}$ in $\P$, where $\UWKL$ is as follows:
\be
(\exists \Phi^{1\di 1})(\forall T^{1}\leq_{1}1)\big[ (\forall n^{0})(\exists \beta^{0})(\beta\in T\wedge |\beta|=n)\di (\forall m^{0})(\overline{\Phi(T)}m\in T)  \big].
\ee
As proved in \cite{kooltje}, $\UWKL$ implies $(\exists^{2})$, and the latter proof immediately transfers to $\P_{0}+(\textsf{\textup{E}})^{\st}_{2}$, yielding that $\STP_{2}\di (\exists^{2})^{\st}$.  Fix a standard binary tree $T$.

\medskip
  
Apply overspill to $(\forall^{\st} n^{0})(\exists \beta^{0})(\beta\in T\wedge |\beta|=n)$ to obtain a sequence in $T$ of nonstandard length, say $N$.
Now define $\Phi(T)(0)$ as $0$ (resp.\ $1$) if there is a sequence $\beta^{0^{*}}\in T$ of length $N$ such that $\beta(0)=0$ (resp.\ if this is not the case).  Then define $\Phi(T)(n+1)$ as $\Phi(T)(0)*\dots \Phi(T)(n)*0$ (resp.\ $\dots*1$) if there is a sequence $\beta^{0^{*}} \in T$ of length $N$ such that $\Phi(T)(0)*\dots \Phi(T)(n)*0=\overline{\beta}(n+1)$ (resp.\ if this is not the case).  By $\STP_{2}$, there is standard $\Psi$ such that $\Phi(f)\approx_{1}\Psi(f)$ for standard $f^{1}\leq_{1}1$, and $\STP_{2}\di \UWKL^{\st}$ follows immediately.  
\end{proof}
In conclusion, the previous theorem suggests that the axiom \emph{Standard Part} is \emph{in general} fundamentally non-constructive, as claimed by Wattenberg.  
Moreover, since $\eqref{EXT}_{n+2}^{\st}$ readily follows from \emph{Transfer}, Theorem \ref{konky} is especially relevant when a proof utilises both \emph{Transfer} and \emph{Standard Part}.

\section{Compactness}\label{fraco}
We discuss Wattenberg's treatment from \cite{watje}*{III} of compactness (Sections \ref{compa} and \ref{SSTP}) and the associated extreme value theorem (Section \ref{compa2}).  

\subsection{Constructive compactness and Nonstandard Analysis}\label{compa}
Wattenberg describes the following form of compactness as `acceptable' in \cite{watje}*{III.4}.  As we will see, his choice of compactness is indeed most suitable for obtaining constructive or effective results.  
\bdefi[$F$-compactness]
A metric space $X$ is \emph{$F$-compact} if there is a standard sequence $x_{(\cdot)}$ such that $(\forall x\in X)(\forall N^{0}\in \Omega) (\exists k\leq N)(|x_{k}-x|_{X}\approx 0)$.
\edefi
Note that inside $\P$ (and extensions), the unit interval and Cantor space are $F$-compact, but not necessarily nonstandard compact (as $\P+\paai\not\vdash \STP$ by \cite{dagsam}*{\S4}).  In particular, nonstandard compactness guarantees
the infinitesimal proximity of a \emph{standard} point, while $F$-compactness states the presence of an `infinitesimal grid' of \emph{nonstandard} points.  Thus, $F$-compactness expresses the intuitive notion that a compact space `can be divided into infinitesimal pieces', a mainstay of the infinitesimal calculus used in physics and engineering.     
The notion of $F$-compactness for special cases has been studied in \cite{sambon}.  

\medskip

We first prove a basic result regarding $F$-compactness.  Note that the latter provides a kind of `discretisation' 
of the space $X$ as used in an essential way for the unit interval in the steps \eqref{koer}-\eqref{noco} at the beginning of Section \ref{good}.  
\begin{thm}[$\FC_{\R}$]
An $F$-compact $X\subset \R$ has a supremum, i.e.\ for all standard $x_{(\cdot)}$ and any $X\subset \R$, we have
\[
(\forall x\in X)(\forall N^{0}\in \Omega) (\exists k\leq N)(x_{k}\approx x) \di  (\forall x\in X)(\forall N\in \Omega)(x\lessapprox t(x_{(\cdot)}, N))),
\]
where $t(x_{(\cdot)}, N ):= \max_{i\leq N}x_{i}$.  
\end{thm}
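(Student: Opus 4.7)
The proof should be essentially immediate from unpacking the definition of $F$-compactness, so I would present it as a short direct argument rather than routing it through one of the term-extraction templates.

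The plan is as follows. Fix an arbitrary $x \in X$ and a nonstandard $N^{0} \in \Omega$. By the $F$-compactness hypothesis applied to this $x$ and $N$, there is some index $k \leq N$ with $x_{k} \approx x$. By the definition of $t(x_{(\cdot)}, N) = \max_{i \leq N} x_{i}$, which is a perfectly legitimate internal object (the max of an internal sequence indexed by $i \leq N$, even though $N$ is nonstandard), we have the trivial inequality $x_{k} \leq_{\R} t(x_{(\cdot)}, N)$. Chaining these two facts gives $x \approx x_{k} \leq_{\R} t(x_{(\cdot)}, N)$, which by definition of $\lessapprox$ is exactly $x \lessapprox t(x_{(\cdot)}, N)$. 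Since $x \in X$ and $N \in \Omega$ were arbitrary, this is the desired conclusion.

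The only mildly subtle point worth a remark is that $t(x_{(\cdot)}, N)$ will in general be a nonstandard real even though the sequence $x_{(\cdot)}$ itself is standard, because the max is taken up to the nonstandard index $N$; however this causes no issue, because the statement to be proved only asks for $x \lessapprox t(x_{(\cdot)}, N)$ and not for $t(x_{(\cdot)}, N)$ to be standard or to be a supremum in the usual sense. (In fact, Remark \ref{notnotnting} on $\Omega$-invariance shows why one cannot in general hope to apply $\Omega\textsf{-CA}$ here to replace $t(x_{(\cdot)}, N)$ by a standard supremum.)

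The main obstacle, such as it is, is essentially notational: convincing oneself that the obvious chain $x \approx x_{k} \leq t(x_{(\cdot)}, N)$ really does constitute the statement $x \lessapprox t(x_{(\cdot)}, N)$ under Definition \ref{keepintireal}. Once this is granted, no idealisation, overspill, underspill, $\HAC_{\INT}$ or term extraction is needed, and the conclusion follows in three lines. This is consistent with Osswald's local constructivity slogan quoted in Section \ref{intro}: inside the nonstandard world, the supremum is simply the finite (internal) max over a grid of nonstandard length, and no non-constructive jump via $\STP$ or \emph{Transfer} is required.
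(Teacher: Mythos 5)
Your proof is correct and is exactly what the paper has in mind: the paper itself gives no explicit argument for $\FC_{\R}$, dismissing it in the proof of the subsequent extraction theorem with the words ``the nonstandard proof is trivial'', and the trivial proof is precisely your three-line chain $x\approx x_{k}\leq_{\R}\max_{i\leq N}x_{i}$. Your added remarks on the internal legitimacy of the max up to a nonstandard index and on why $\Omega$\textsf{-CA} is not applicable here are accurate and consistent with Remark \ref{notnotnting}.
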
\noindent
Note that $\Omega\textsf{-CA}$ converts $t(x_{(\cdot)}, N)$ from the theorem into a \emph{standard} supremum.  

\medskip

The constructive version of the previous theorem is \cite{bish1}*{Theorem 3, p.\ 34}.  The latter version involves the notion of `totally boundedness' as in the antecedent of \eqref{cardi}, which `falls out' of the notion of $F$-compactness by the following theorem.  
\begin{thm}
From a proof $\P_{0}\vdash \FC_{\R}$, a term $s$ can be extracted such $\RCAo$ proves that for any $x_{(\cdot)}$ and $X\subset \R$ and $g^{1}$, we have
\be\label{cardi}\textstyle
(\forall k^{0},  x\in X) (\exists n\leq g(k))(|x_{n}- x|<_{\R}\frac{1}{k})\di  (\forall x\in X)(x\leq_{\R} \sup_{X}(x_{(\cdot)}))),
\ee
where the real in the consequent is defined by $\sup_{X}(x_{(\cdot)} )(k):=t(x_{(\cdot)}, s(x_{(\cdot)}, 2^{k}))$.  
\end{thm}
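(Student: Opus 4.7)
The plan is to follow Template \ref{tempe}: normalize $\FC_{\R}$ into the form $(\forall^{\st}\underline{x})(\exists^{\st}\underline{y})\varphi$ with $\varphi$ internal, and then extract a term via Corollary \ref{consresultcor2}.

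Denoting the hypothesis of $\FC_{\R}$ by $A$ and the conclusion by $B$, I first bring $A\equiv(\forall x\in X)(\forall N\in\Omega)(\exists k\leq N)(x_{k}\approx x)$ into a normal form \emph{uniform in} $x$. Resolving $\approx$ as $(\forall^{\st}l^{0})(|x_{k}-x|<\tfrac{1}{l})$ and commuting the $(\forall^{\st}l)$ in front of $(\forall N\in\Omega)(\forall x\in X)$ (all three quantifiers being universal) produces the internal formula $\psi(N,l)\equiv (\forall x\in X)(\exists k\leq N)(|x_{k}-x|<\tfrac{1}{l})$ under $(\forall^{\st}l)(\forall N\in\Omega)$. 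Underspill (Theorem \ref{doppi}) then gives
\[
A^{\textup{norm}}\equiv (\forall^{\st}l^{0})(\exists^{\st}M^{0})(\forall x\in X)(\exists k\leq M)(|x_{k}-x|<\tfrac{1}{l}).
\]
An analogous treatment of $B\equiv(\forall x\in X)(\forall N\in\Omega)(x\lessapprox\max_{i\leq N}x_{i})$, resolving $\lessapprox$ via $(\forall^{\st}l)(x\leq \cdot +\tfrac{1}{l})$ and applying underspill to the internal uniform formula, yields
\[
B^{\textup{norm}}\equiv (\forall^{\st}l^{0})(\exists^{\st}M^{0})(\forall x\in X)(x\leq\max_{i\leq M}x_{i}+\tfrac{1}{l}).
\]

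Following step (ii.a) of Template \ref{tempe}, I next introduce a standard functional $g^{1}$ witnessing $(\exists^{\st}M)$ in $A^{\textup{norm}}$ and drop \textup{st} in the antecedent. Because the remaining standard quantifiers $(\forall^{\st}l)$ and $(\exists^{\st}M)$ in the implication $A^{\textup{norm}}\di B^{\textup{norm}}$ can now be pulled to the front classically (no Idealisation needed, since $x\in X$ is not subject to \textup{st}), one arrives at
\[
(\forall^{\st}g^{1},l^{0})(\exists^{\st}M^{0})\bigl[(\forall l',x'\in X)(\exists k\leq g(l'))(|x_{k}-x'|<\tfrac{1}{l'})\di (\forall x\in X)(x\leq\max_{i\leq M}x_{i}+\tfrac{1}{l})\bigr],
\]
which is in normal form. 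Corollary \ref{consresultcor2} combined with Remark \ref{simply} (the consequent is monotone in $M$) then yields a closed term $s\in \T^{*}$ such that $\RCAo$ proves the corresponding internal implication.

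Setting $l:=2^{k}$ gives: whenever the hypothesis of the target theorem holds, $(\forall x\in X)(x\leq \max_{i\leq s(x_{(\cdot)},g,2^{k})}x_{i}+\tfrac{1}{2^{k}})$ for every $k$. Since $\max_{i\leq N}x_{i}$ is non-decreasing in $N$ and bounded above (as witnessed by any fixed $x_{n}$), the sequence $k\mapsto \max_{i\leq s(x_{(\cdot)},g,2^{k})}x_{i}$ is readily turned into a fast-converging Cauchy sequence (via Kohlenbach's hat function from Definition \ref{keepintireal}) that represents the real $\sup_{X}(x_{(\cdot)})$ of the theorem. The main obstacle is the very first step: the $(\forall^{\st}l)$ arising from $\approx$ must be pulled \emph{in front of} $(\forall x\in X)$ \emph{before} underspill is invoked, since underspill applies only to internal formulas; doing this in the wrong order would yield a bound $M$ depending on $x$, failing both the uniform character of the conclusion and the $x$-independent shape of $\sup_{X}(x_{(\cdot)})$ prescribed by the theorem.
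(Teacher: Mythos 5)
Your proposal is correct and follows essentially the same route as the paper's own proof: you derive exactly the same two normal forms for the antecedent and the consequent of $\FC_{\R}$ (via resolving $\approx$ and $\lessapprox$, commuting the resulting $(\forall^{\st}l)$ outward, and applying underspill) and then apply term extraction via Corollary \ref{consresultcor2}, merely spelling out the herbrandization and reassembly steps that the paper compresses into ``readily yields the theorem''. Your extracted term correctly carries the modulus $g$ as an argument, a dependence that the paper's notation $s(x_{(\cdot)},2^{k})$ suppresses.
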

\begin{proof}
The nonstandard proof is trivial.  For the remaining part, one readily proves using underspill that $(\forall x\in X)(\forall N^{0}\in \Omega) (\exists k\leq N)(x_{k}\approx x)$ has the normal form
\[\textstyle
(\forall^{\st} l^{0})(\exists^{\st}M^{0})(\forall x\in X) (\exists k\leq M)(|x_{k}- x|<\frac{1}{l}).
\]
Similarly, a normal form for $(\forall x\in X)(\forall N\in \Omega)(x\lessapprox t(x_{(\cdot)}, N)))$ is as follows:
\[\textstyle
(\forall^{\st}k^{0})(\exists^{\st} M^{0})(\forall x\in X)(x\leq  t(x_{(\cdot)}, M))+\frac{1}{k}).
\]
Given these normal forms, Theorem \ref{consresultcor} now readily yields the theorem.  
\end{proof}
The function $g$ in the antecedent of \eqref{cardi} is a \emph{modulus of totally boundedness}.  

\medskip

Admittedly, the previous result is rather basic but our aim was to show that (a) $F$-compactness is converted to totally boundedness, the preferred constructive component of compactness, as in the antecedent of \eqref{cardi}, and (b) that Wattenberg correctly identifies $F$-compactness as having `constructive potential' in \cite{watje}*{III}.  

\medskip

With regard to (b), we now show that $F$-compactness cannot be (immediately) replaced with nonstandard compactness as in $\STP$ or $\STP_{\R}$.  
To this end, we first prove that $\STP$ and $\STP_{\R}$ have equivalent normal forms as noted in Section~\ref{preint}.  
Note that a version of this theorem \emph{not involving $\STP_{\R}$} may be found in \cite{dagsam}.  
\begin{thm}\label{klak}
In $\P$, $\STP$ is equivalent to $\STP_{\R}$ and to the normal forms
\begin{align}\label{frukkklk2}
(\forall^{\st}g^{2})(\exists^{\st}w^{1^{*}}, k^{0})(\forall T^{1}\leq_{1}1)\big[(\forall \alpha^{1}\in w)&(\overline{\alpha}g(\alpha)\not\in T)\\
&\di(\forall \beta\leq_{1}1)(\exists i\leq k)(\overline{\beta}i\not\in T) \big], \notag
\end{align}
\begin{align}\textstyle
(\forall^{\st} g^{2})(\exists^{\st}w^{1^{*}}, k)(\forall z\in \R)
\big[ \big(\forall y\in ( w\cap [0,1])\big)&\textstyle (|y-z|>_{\R} \frac{1}{g(y)})\label{kalkuttttt2}\\
&\textstyle\di (\forall x\in [0,1])(|x-z|>_{\R}\frac{1}{k})   \big].\notag
\end{align}
\end{thm}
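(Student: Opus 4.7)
The plan is to prove the cycle $\STP \Leftrightarrow \STP_{\R} \Leftrightarrow \eqref{kalkuttttt2}$ together with $\STP \Leftrightarrow \eqref{frukkklk2}$ inside $\P$. The second of these four-way equivalences is already established in \cite{dagsam}, so I would invoke that result and focus on the new ingredient: the passage between $\STP$ and $\STP_{\R}$ and the derivation of the real-valued normal form \eqref{kalkuttttt2}.

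First I would prove $\STP \Leftrightarrow \STP_{\R}$ by encoding reals as binary sequences and vice versa. For $\STP \Rightarrow \STP_{\R}$, given $x\in[0,1]$ with fast-converging approximations $[x](n)$, produce an internal $\alpha\leq_{1}1$ via a fixed primitive recursive bijection between dyadic rationals in $[0,1]$ and $\{0,1\}^{<\omega}$; apply $\STP$ to obtain standard $\beta\approx_{1}\alpha$ and decode to a standard real $y\approx x$. Conversely, given $\alpha\leq_{1}1$, the closed term $x:=\sum_{n=0}^{\infty}\alpha(n)/2^{n+1}$ defines a real in $[0,1]$; applying $\STP_{\R}$ yields standard $y\approx x$, from which a standard binary $\beta\approx_{1}\alpha$ is recovered (the at most two-fold ambiguity of binary expansions at dyadic rationals is harmless as $\approx_{1}$ only cares about agreement on standard initial segments).

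Second, I would derive \eqref{kalkuttttt2} from $\STP_{\R}$ by adapting the template that converts $\STP$ into \eqref{frukkklk2} in \cite{dagsam}. Unfold `$\approx$' for reals in the statement $(\forall z\in[0,1])(\exists^{\st}y\in[0,1])(z\approx y)$, take the classical contrapositive, apply $\HAC_{\INT}$ to Skolemize the resulting $(\forall^{\st}y)(\exists^{\st}n)$-block into a single standard modulus $g^{2}$, and then apply the contrapositive of Idealisation $\I$ to pull the remaining $(\exists^{\st}\,\cdot\,)$ across the universal $(\forall z\in[0,1])$. The finite standard list of pairs $(y_{i},k_{i})\in w^{1^{*}}\times \N$ produced by $\I$, after replacing $k$ by the maximum of the $k_{i}$, is precisely the consequent appearing in \eqref{kalkuttttt2}. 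The converse \eqref{kalkuttttt2}$\Rightarrow \STP_{\R}$ is obtained by specialising the normal form at a $g$ tailored to a given $z\in[0,1]$: the resulting standard finite cover forces $z$ to be infinitely close to some element of $w$.

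The main obstacle is the bookkeeping in the Idealisation step: one must ensure that the classical rewriting of the implication as a disjunction, and the subsequent application of $\I$, respect the mixed type $(y,k)\in w^{1^{*}}\times \N$ demanded in \eqref{kalkuttttt2}, which is handled using the sequence coding of Notation \ref{skim} and the conventions of Remark \ref{simply}. A secondary delicate point is verifying that the real-to-sequence encoding in Step One does not covertly invoke $\paai$, since $\P\not\vdash\paai$; carrying out the conversion at the level of the internal fast-converging approximations (rather than via any classically-infinite binary expansion) sidesteps this issue.
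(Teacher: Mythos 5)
Your overall route matches the paper's: the equivalence $\STP\asa\STP_{\R}$ is obtained by coding reals in $[0,1]$ as binary sequences, and the normal forms arise from the same contrapose--$\HAC_{\INT}$--Idealisation template. The differences are organisational. The paper does not outsource $\STP\asa\eqref{frukkklk2}$ to \cite{dagsam} but reproves it via the intermediate nonstandard weak K\"onig's lemma \eqref{fanns2} (overspill yields a branch of nonstandard length, $\STP$ standardises it; conversely one applies \eqref{fanns2} to the tree of initial segments of a given $f^{1}\leq_{1}1$), and for the coding of reals it simply invokes Hirst's theorem \cite{polahirst} that $\RCA_{0}$ --- hence $\P_{0}$, both internally and relative to `st' --- proves every real in $[0,1]$ has a binary expansion; this disposes of your worry about covertly invoking $\paai$ without any hand-rolled encoding of the Cauchy approximations. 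Conversely, you do more than the written proof on one point: the paper never actually carries out the derivation of \eqref{kalkuttttt2}, whereas you sketch it correctly. One caveat: your justification for the dyadic ambiguity in $\STP_{\R}\Rightarrow\STP$ is wrong as stated --- the two expansions $\sigma*\langle 1\rangle*00\dots$ and $\sigma*\langle 0\rangle*11\dots$ of a dyadic rational already differ at a \emph{standard} position, so $\approx_{1}$ very much distinguishes them. The correct (one-line, classical) repair is that at least one of the two standard binary expansions of the standard $y\approx\sum_{n}\alpha(n)2^{-n-1}$ agrees with $\alpha$ on every standard initial segment, and either way the witness is standard. Since the paper is entirely silent on this direction, this is an imprecision shared with (indeed, exceeding the detail of) the published argument rather than a genuine gap.
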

\begin{proof}    
We first prove that $\STP$ and $\STP_{\R}$ are equivalent.  Now, Hirst establishes in \cite{polahirst} that $\RCA_{0}$ proves that every real $x\in [0,1]$ has a binary expansion, i.e.\ $(\forall x\in [0,1])(\exists \alpha^{1}\leq_{1}1)(x=_{\R}\sum_{i=0}^{\infty}\frac{\alpha(i)}{2^{i}})$. 
Since $\P_{0}$ proves the latter (both the internal version and the version relative to `st'), it is clear that $\STP\asa \STP_{\R}$.  

\medskip

Secondly, we prove that $\STP$ is equivalent to 
\begin{align}\label{fanns2}
(\forall T^{1}\leq_{1}1)\big[(\forall^{\st}n)(\exists \beta^{0})&(|\beta|=n \wedge \beta\in T ) \\
&\di (\exists^{\st}\alpha^{1}\leq_{1}1)(\forall^{\st}n^{0})(\overline{\alpha}n\in T)   \big].\notag
\end{align}
Assume \ref{STP} and apply overspill to $(\forall^{\st}n)(\exists \beta^{0})(|\beta|=n \wedge \beta\in T )$ to obtain $\beta_{0}^{0}\in T$ with nonstandard length $|\beta_{0}|$.  
Now apply \ref{STP} to $\beta^{1}:=\beta_{0}*00\dots$ to obtain a \emph{standard} $\alpha^{1}\leq_{1}1$ such that $\alpha\approx_{1}\beta$ and hence $(\forall^{\st}n)(\overline{\alpha}n\in T)$.  
For the reverse direction, let $f^{1}$ be a binary sequence, and define a binary tree $T_{f}$ which contains all initial segments of $f$.  
Now apply \eqref{fanns2} for $T=T_{f}$ to obtain \ref{STP}.  

\medskip

Thirdly, assume $\STP$ and note that the contraposition of \eqref{fanns2} yields:
\begin{align}\label{fannsXXY}
(\forall T^{1}\leq_{1}1)\big[ (\forall^{\st}\alpha\leq_{1}1)(\exists^{\st}n^{0})(\overline{\alpha}n&\not\in T)\di\\
 &(\exists^{\st}k^{0})(\forall \beta\leq_{1}1)(\exists i\leq k)(\overline{\beta}i\not\in T) \big].\notag
\end{align}
Since standard functionals have standard output for standard input, \eqref{fannsXXY} implies:
\begin{align}\label{fannsXXX}
(\forall T^{1}\leq_{1}1)(\forall^{\st}g^{2})\big[ (\forall^{\st}\alpha\leq_{1}1)(\overline{\alpha}g(\alpha)&\not\in T)\di\\
 &(\exists^{\st}k^{0})(\forall \beta\leq_{1}1)(\exists i\leq k)(\overline{\beta}i\not\in T) \big].\notag
\end{align}
Pushing all standard quantifiers outside as far as possible, we obtain that
\begin{align}
(\forall^{\st}g^{2})(\forall T^{1}\leq_{1}1)(\exists^{\st}k^{0}, \alpha^{1}&\leq_{1}1)\big[(\overline{\alpha}g(\alpha)\not\in T)\label{haiku}\\
&\di(\forall \beta\leq_{1}1)(\exists i\leq k)(\overline{\beta}i\not\in T) \big].\notag
\end{align}
Applying \emph{Idealisation} $\textsf{I}$, we pull the standard quantifiers to the front as follows:
\begin{align}\label{teringzooi}
(\forall^{\st}g^{2})(\exists^{\st}w^{1^{*}})(\forall T^{1}\leq_{1}1)(\exists ( \alpha^{1}\leq_{1}1,  k^{0}) \in w)\big[(\overline{\alpha}&g(\alpha)\not\in T)\\
&\di(\forall \beta\leq_{1}1)(\exists i\leq k)(\overline{\beta}i\not\in T) \big]. \notag
\end{align}
Now assume \eqref{teringzooi} and note that since $w$ is standard, all of its elements are, implying \eqref{haiku}.  
Bringing all standard quantifiers inside again (as far as possible), we obtain \eqref{fannsXXX}.  We now immediately obtain \eqref{fannsXXY} by noting that 
$(\forall^{\st}\alpha\leq_{1}1)(\exists^{\st}n^{0})(\overline{\alpha}n\not\in T)$ implies $(\exists^{\st}\Phi^{1\di 0^{*}})(\forall^{\st}\alpha\leq_{1}1)(\exists n^{0}\in \Phi(\alpha))(\overline{\alpha}n\not\in T)$ by applying $\HAC_{\INT}$ and defining $g(\alpha):=\max_{i<|\Phi(\alpha)|}\Phi(\alpha)(i)$ as in Remark \ref{simply}.   
\end{proof}
In conclusion, $\STP$ is equivalent to the normal form \eqref{frukkklk2}, and term extraction as in Theorem \ref{consresultcor} converts the latter to the special fan functional introduced in Section \ref{preint}.  Since the latter boasts extreme computational hardness, it indeed seems better to avoid nonstandard compactness $\STP$ in favour of $F$-compactness.  
In the next section, we will show how computational content can still be obtained from $\STP$ and the special fan functional.  

\subsection{The constructive status of nonstandard compactness}\label{SSTP}
We investigate \emph{Standard Part} in light of Wattenberg's claims that it be fundamentally non-constructive.  
By Theorem \ref{konky}, a rather small fragment of \emph{Standard part} is indeed fundamentally non-constructive.  
By contrast, we show in this section that $\STP$, which is the particular fragment Wattenberg uses in \cite{watje}*{II-III}, still yields constructive content, after some extra technical steps.   This section is somewhat more technical in nature as we assume familiarity with the $\ECF$-translation from  \cite{troelstra1}*{\S2.6.5}.  

\medskip

First of all, nonstandard continuity \eqref{soareyou3} clearly yields \emph{uniform} nonstandard continuity \eqref{soareyou4} for the unit interval inside $\P_{0}+\STP$ by Theorem \ref{klak}.  
Thus, the following version of $\IVT$ is immediate where the term $t$ is as in \eqref{teke}.  
\begin{thm}[$\IVT_{\textsf{\wat}}'$]\label{wakko2}
For nonstandard $N^{0}$ and nonstandard continuous $f:[0,1]\di \R$ such that $f(0)<_{\R}0<_{\R}f(1)$, $t(f,N)\in [0,1]\wedge f(t(f, N))\approx 0$.  
\end{thm}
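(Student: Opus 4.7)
The plan is to reduce $\IVT_{\textsf{\wat}}'$ to the already-established $\IVT_{\textsf{\wat}}$ (Theorem \ref{wakko}) by using $\STP$ (equivalently $\STP_{\R}$ via Theorem \ref{klak}) to boost nonstandard \emph{pointwise} continuity on $[0,1]$ to nonstandard \emph{uniform} continuity on $[0,1]$. Once this upgrade is in place, the term $t$ from \eqref{teke2} already gives the conclusion because the existing proof of $\IVT_{\textsf{\wat}}$ only really needed uniform nonstandard continuity in step \eqref{cruxjes} at the beginning of Section \ref{good}, and items \eqref{koer}--\eqref{cruxjes} are internal computations valid in $\P_{0}$.

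First I would carry out the upgrade step as follows. Fix a nonstandard continuous $f:[0,1]\di\R$ in the sense of \eqref{soareyou3}, and let $x,x'\in[0,1]$ be arbitrary (possibly nonstandard) with $x\approx x'$. By $\STP_{\R}$ applied to $x$, there is a standard $y\in[0,1]$ with $y\approx x$, and hence also $y\approx x'$. Since $y$ is standard, two applications of nonstandard pointwise continuity \eqref{soareyou3} at $y$ give $f(y)\approx f(x)$ and $f(y)\approx f(x')$, so by transitivity of $\approx$ we obtain $f(x)\approx f(x')$. This is exactly nonstandard uniform continuity \eqref{soareyou4} on $[0,1]$.

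Next, I would invoke $\IVT_{\textsf{\wat}}$ (Theorem \ref{wakko}) for $[a,b]=[0,1]$: since the hypothesis $f(0)<_{\R}0<_{\R}f(1)$ is preserved and we have just shown $f$ is nonstandard uniformly continuous, the same term $t$ from \eqref{teke2} satisfies $t(f,N)\in[0,1]$ and $f(t(f,N))\approx 0$ for any nonstandard $N^{0}$. Since $t$ is defined uniformly in $f$ and $N$, no change to the witness is needed; the only new ingredient is the preliminary use of $\STP_{\R}$ to obtain a standard $y$ infinitely close to any given $x\in[0,1]$.

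There is no real obstacle, as the argument is entirely modular: the only non-trivial input beyond $\IVT_{\textsf{\wat}}$ is the pointwise-to-uniform upgrade, which is exactly the well-known nonstandard compactness argument encoded by $\STP_{\R}$. The remark preceding the theorem explicitly flags this upgrade as immediate, so the proof is essentially a one-line reduction; the content of $\IVT_{\textsf{\wat}}'$ is precisely the observation that $\STP$ lets one trade the stronger hypothesis of nonstandard uniform continuity in $\IVT_{\textsf{\wat}}$ for the weaker-looking hypothesis of nonstandard pointwise continuity, at the cost of invoking the nonstandard compactness of $[0,1]$.
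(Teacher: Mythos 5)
Your proposal is correct and matches the paper's own (very brief) argument: the paper likewise derives $\IVT_{\textsf{\wat}}'$ by noting that $\STP_{\R}$ upgrades nonstandard pointwise continuity to nonstandard uniform continuity on $[0,1]$ and then invoking $\IVT_{\textsf{\wat}}$ with the same term $t$. Your explicit two-applications-of-continuity-at-a-standard-point argument is exactly the upgrade the paper leaves implicit.
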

We now show that this nonstandard version yields the following effective version.  
\begin{thm}[$\IVT_{\ef}'(s)$]
For $k^{0}$ and $f:[a,b]\di \R$ continuous with modulus $g$ and such that $f(a)<_{\R}0<_{\R}f(b)$, we have $|f(s(f,g,k))|<_{\R}\frac{1}{k}$.
\end{thm}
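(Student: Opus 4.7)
The plan is to mimic the derivation of Theorem \ref{hark} but now in the stronger base $\P_{0}+\STP$, and then to use the $\ECF$-translation to eliminate the special fan functional from the extracted term.

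First, I would verify that $\P_{0}+\STP \vdash \IVT_{\wat}'$, which follows at once from Theorem \ref{wakko2} (indeed, on $[0,1]$ the axiom $\STP_{\R}$ upgrades nonstandard pointwise continuity \eqref{soareyou3} to the uniform variant \eqref{soareyou4}, so the witnessing term $t$ from \eqref{teke2} does the job). Next, following Template \ref{tempe}, I would convert $\IVT_{\wat}'$ into a normal form: use the normal form for nonstandard pointwise continuity already derived in the proof of Corollary \ref{idare} (involving a standard pointwise modulus $g^{2}$ introduced via step (ii.a)), apply \emph{Idealisation} \textsf{I} to pull the standard quantifiers to the front, and arrive at a sentence of shape $(\forall^{\st}\underline{x})(\exists^{\st}\underline{y})\varphi$ provable in $\P_{0}+\STP$.

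At this point Theorem \ref{daradal} applies and yields a closed term $u$ such that
\begin{equation}\label{ploz}
\RCAo \vdash (\forall \Theta^{3},f,g,k)\bigl[\SCF(\Theta)\wedge\text{``}g\text{ is a pointwise modulus for }f\text{''}\wedge f(a)<0<f(b) \to |f(u(f,g,k,\Theta))| < \tfrac{1}{k}\bigr].
\end{equation}
So far $\Theta$ still appears, which prevents \eqref{ploz} from being the desired $\IVT_{\ef}'$. To remove it, I would apply the $\ECF$-translation from \cite{troelstra1}*{\S2.6.5} to \eqref{ploz}. Under $\ECF$ all type-$2$ functionals are continuous, so every pointwise modulus $g$ for a continuous $f$ on the compact interval $[0,1]$ canonically produces an intuitionistic fan functional $\Omega_{g}$ satisfying $\MUC(\Omega_{g})$, and the remark after \eqref{lukl3} supplies a term $\tau$ in Gödel's $T$ computing a special fan functional $\Theta$ from any such $\Omega$. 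Setting $s(f,g,k):=u(f,g,k,\tau(\Omega_{g}))$ and applying $\ECF$ to \eqref{ploz} then yields $\IVT_{\ef}'(s)$.

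The main obstacle is the bookkeeping in the last step: one must check that $\Omega_{g}$ really is definable as a term from $g$ inside (the $\ECF$-interpretation of) $\RCAo$, and that the composition $u \circ \tau \circ (\cdot)_{g}$ survives the translation as a genuine closed term in $\T^{*}$. Once this technical verification is in place—which is exactly what familiarity with the $\ECF$-translation provides—the effective $\IVT_{\ef}'(s)$ drops out without any further calculation.
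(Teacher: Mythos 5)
Your first two steps reproduce the paper's Theorem \ref{konky2}: one shows $\P_{0}+\STP\vdash \IVT_{\wat}'$ via Theorem \ref{klak}, normalises following Template \ref{tempe} using the normal form \eqref{frukkklk2} of $\STP$, and extracts a term $u$ with $\RCAo\vdash(\forall\Theta^{3})\big(\SCF(\Theta)\di\IVT_{\ef}'(u(\Theta))\big)$. The gap is in your final step, where you try to discharge the hypothesis $\SCF(\Theta)$ and obtain $\IVT_{\ef}'(s)$ outright for a closed term $s$. You claim that a pointwise modulus $g$ for the single function $f$ ``canonically produces'' a functional $\Omega_{g}$ satisfying $\MUC(\Omega_{g})$. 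That is false: $\MUC(\Omega)$ demands a modulus of \emph{uniform} continuity on Cantor space for \emph{every} $Y^{2}$, which cannot be defined by a term from a pointwise modulus of one real-valued function; indeed $[(\exists\Omega)\MUC(\Omega)]_{\ECF}$ is $(\exists\gamma^{1})\TOF(\gamma)$, which is equivalent to $\WKL$ over $\RCA_{0}^{2}$, so no such $\Omega_{g}$ (nor an associate for it) is available in the base theory. This is not bookkeeping but the substantive obstruction the whole section is about: passing from a pointwise to a uniform modulus on $[0,1]$ is exactly the fan-theorem/$\WKL$ content carried by $\STP$, and it is not produced by any closed term of $\T^{*}$.

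Accordingly, the unconditional $\IVT_{\ef}'(s)$ you aim at is stronger than what the paper establishes (and than what one should expect to be provable in $\RCAo$). The paper stops at Theorem \ref{konky2}, keeping $\SCF(\Theta)$ as an explicit antecedent, and in Corollary \ref{klike} applies $\ECF$ to the \emph{entire} implication, obtaining only the translated statement $[\IVT_{\ef}']_{\ECF}(v(\widehat{\FAN}))$ over the base theory $\RCA_{0}^{2}+\WKL$: the term $v(\widehat{\FAN})$ is computable because $\widehat{\FAN}$ is a primitive recursive associate, but its verification still uses $\WKL$, and the conclusion is the $\ECF$-interpreted statement (continuous functionals replaced by associates), not $\IVT_{\ef}'(s)$ itself. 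If you weaken your goal to either of those forms, your outline matches the paper's; as stated, the elimination of $\Theta$ does not go through.
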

\begin{thm}\label{konky2}
From the proof of $\IVT_{\textsf{\wat}}'$ in $\P_{0}+\STP$, a term $u$ can be extracted such that $\RCAo$ proves $(\forall \Theta^{3})\big(\SCF(\Theta)\di \IVT_{\ef}'(u(\Theta))\big)$.
\end{thm}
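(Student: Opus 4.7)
The plan is to follow the template from Theorem~\ref{hark} and then invoke Theorem~\ref{daradal} to absorb the use of $\STP$ into a dependence on the special fan functional $\Theta$. The starting observation is that $\STP$ upgrades nonstandard pointwise continuity to the uniform variety on $[0,1]$: given standard $f$ nonstandard continuous and arbitrary $x,y\in [0,1]$ with $x\approx y$, apply $\STP_{\R}$ (equivalent to $\STP$ by Theorem~\ref{klak}) to obtain a standard $z\in [0,1]$ with $z\approx x\approx y$; nonstandard continuity at the standard point $z$ then yields $f(x)\approx f(z)\approx f(y)$. Hence $f$ is nonstandard uniformly continuous and $\IVT_{\wat}'$ follows from $\IVT_{\wat}$ (Theorem~\ref{wakko}) with the same witnessing term $t$ as in \eqref{teke2}.

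Next I would convert $\IVT_{\wat}'$ into a normal form following Template~\ref{tempe}. Writing $A'(f,k,N)$ for the conjunction of $f(0)<_{\R}0<_{\R}f(1)$ and the formula in square brackets in \eqref{soareyou1} at a given standard real $x$, nonstandard \emph{pointwise} continuity of $f$ plus $f(0)<0<f(1)$ takes the form $(\forall^{\st}x\in[0,1],k)(\exists^{\st}N)A'(f,k,N,x)$. The consequent $(\forall N\in\Omega)[t(f,N)\in[0,1]\wedge f(t(f,N))\approx 0]$ becomes, after resolving $\approx$ and applying underspill, $(\forall^{\st}l)(\exists^{\st}n)(\forall N\geq n)B(f,N,l)$ with $B$ internal. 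Introducing a standard modulus $g^{1\times 0\to 0}$ for the antecedent (via step (ii.a) of the Template), dropping $\st$ in the antecedent, and then applying Idealisation~\textsf{I} to push all standard quantifiers to the front, one arrives at a normal form
\[
(\forall^{\st}g,l)(\exists^{\st}n)\big[(\forall f{:}[0,1]\to\R)\,C(g,l,f,n)\big],
\]
where $C$ is internal and expresses: if $g$ is a pointwise modulus of continuity for $f$ and $f(0)<0<f(1)$, then $(\forall N\geq n)B(f,N,l)$.

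Since this normal form has been derived in $\P_{0}+\STP$, applying Theorem~\ref{daradal} extracts a term $s$ of $\RCAo$ such that, for every $\Theta$ with $\SCF(\Theta)$, $s(g,l,\Theta)$ provides a finite sequence of candidate $n$'s for which $C(g,l,f,n)$ holds for every $f$. Taking the maximum of this finite sequence (Remark~\ref{simply}) and then feeding the result back into $t$ via $u(f,g,k,\Theta):=t(f,\max_{i<|s(g,k,\Theta)|}s(g,k,\Theta)(i))$ yields a term witnessing $\IVT_{\ef}'(u(\Theta))$ in $\RCAo$.

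The main obstacle is bookkeeping at the normal-form stage: the antecedent now involves a \emph{pointwise} (rather than uniform) modulus, so $g$ has an extra $x$-argument, and one has to make sure that the use of $\STP$ hidden in step~1 (upgrading pointwise to uniform continuity) survives the normal-form manipulations so that the invocation of Theorem~\ref{daradal} correctly deposits all non-constructive content into the single parameter $\Theta$. Once the normal form is in place, however, everything reduces to a direct application of Theorem~\ref{daradal}, analogously to the passage from $\eqref{cruxi}$ to $\eqref{cruxi2}$ in the proof of Theorem~\ref{hark}.
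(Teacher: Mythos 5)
Your proposal is correct and follows essentially the same route as the paper: first use $\STP_{\R}$ (via Theorem~\ref{klak}) to upgrade nonstandard pointwise continuity to the uniform variety so that $\IVT_{\wat}'$ follows from $\IVT_{\wat}$, then normalize as in Theorem~\ref{hark} and extract a term in which the use of $\STP$ surfaces as the parameter $\Theta$. Your invocation of Theorem~\ref{daradal} is just a packaged form of the paper's instruction to ``proceed as in Theorem~\ref{hark} using the normal form \eqref{frukkklk2} of $\STP$'', so there is no substantive difference.
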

\begin{proof}
Since $\P_{0}$ proves $\IVT_{\wat}$ by Theorem~\ref{konky}, $\P_{0}+\STP$ proves $\IVT_{\wat}'$, as in the latter system every nonstandard continuous function is automatically \emph{uniform} nonstandard continuous on the unit interval by $\STP_{\R}$ and Theorem \ref{klak}.  
To obtain the effective results from the theorem, one just proceeds as in Theorem \ref{hark} using the normal form \eqref{frukkklk2} of $\STP$.
\end{proof}
Secondly, Theorem \ref{konky2} is not very satisfactory as the special fan functional is not computable (in the sense of Kleene's S1-S9) in e.g.\ $(\exists^{2})$, or even the Suslin functional (See \cite{dagsam}*{\S3} for these results).  
However, the following observation (\cite{kohlenbach2}*{\S2}) by Kohlenbach will be seen to solve this problem in Theorem \ref{koreki}:
\be\label{bergje}
\text{If $\RCAo\vdash A$ then $\RCA_{0}^{2}\vdash[A]_{\ECF}$}.
\ee
Here, $\RCA_{0}^{2}$ is essentially the base theory $\RCA_{0}$ of Reverse Mathematics (\cite{simpson2}*{II}) formulated with function rather than set variables; the syntactical interpretation $[~\cdot~]_{\ECF}$ is defined in \cite{troelstra1}*{\S2.6.5} and is based on the Kleene-Kreisel model of continuous functionals.  In the latter, higher-type objects are represented by so-called \emph{associates} which is equivalent to the representation used in Reverse Mathematics by \cite{kohlenbach4}*{Prop.\ 4.4} of continuous functionals on Baire space.  

\medskip

In a nutshell, the $\ECF$-translation amounts to replacing all objects of type two or higher by \emph{type one} associates.  
Applying $\ECF$ as in \eqref{bergje} to the final part of Theorem  \ref{konky2}, we shall observe that the special fan functional is converted into a `more computable' object.  We now introduce the definition of associate for a type two functional from \cite{kohlenbach4}, and study the intuitionistic fan functional as an example.  
\bdefi[Associate]\label{LAX} The function $\alpha^{1}$ is an \emph{associate} for continuous $\Phi^{2}$ if
\begin{enumerate}
\item $(\forall \beta^{1})(\exists n^{0})(\alpha(\overline{\beta}n)>0)$,
\item $(\forall \beta^{1}, m^{0})(\alpha(\overline{\beta}m)>0 \di \Phi(\beta)+1=\alpha(\overline{\beta}m))$.
\end{enumerate}
\edefi
One often writes $\alpha(\beta)$, to be understood as $\alpha(\overline{\beta}m)-1$ for large enough $m$ as in the first item.  
Given an associate $\alpha^{1}$ for $\Phi^{2}$, an associate $\gamma^{1}$ for $\Psi^{3}$ is now defined such that $\Psi(\Phi)$ is $\gamma(\alpha)$ where the latter is again $\gamma(\overline{\alpha}k)-1$ for large enough $k$.  

\medskip

By way of an example, consider the intuitionistic fan functional as in $\MUC(\Omega)$ from Section \ref{firstkol}. 
Following the heuristic that all objects of type two or higher are replaced by associates by $\ECF$, it is straightforward to see that $[\MUC(\Omega)]_{\ECF}$ is:
\begin{align}
(\forall \alpha^{1})\Big[ &\underline{(\forall f^{1})(\exists n^{0})(\alpha(\overline{f}n)>0)}\di\big( (\exists m^{0})(\gamma(\overline{\alpha}m)>0)  ~ \wedge \tag{$\TOF(\gamma^{1})$}     \\
& (\forall g^{1}, h^{1}\leq_{1}1, k^{0})([\gamma(\overline{\alpha}k)>0\wedge \overline{h}\gamma(\overline{\alpha}k)=_{0} \overline{g}\gamma(\overline{\alpha}k)]  \di \alpha(h)=\alpha(g) >0 ) \big)\Big].\notag
\end{align}
The underlined formula expresses that $\alpha^{1}$ is an associate representing a (continuous) functional $Y^{2}$, while $\TOF(\gamma)$ expresses that $\gamma^{1}$ is an associate for the intuitionistic fan functional, i.e.\ $\Omega(Y)$ as in $\MUC(\Omega)$ is given by $\gamma(\alpha)$, and the rest of $\TOF(\gamma)$ ensures that $\gamma(\alpha)$ makes sense.  

\medskip

Note that if a functional $\Phi^{2}$ has an associate as in Definition \ref{LAX}, it is \emph{automatically} continuous on Baire space.  
Thus, since $\mu^{2}$ as in $(\mu^{2})$ is discontinuous (e.g. at $00\dots$), $[(\mu^{2})]_{\ECF}$ is equivalent to $0=1$.  
In particular, we observe that the $\ECF$-translation replaces \emph{any} type two variable with a type one variable over associates, i.e.\ the new variable 
ranges over (representations of) \emph{continuous} functionals.  

\medskip

As a result of the aforementioned `continuous replacement', $\WKL$ is equivalent $(\exists \gamma^{1})\TOF(\gamma)$, and the latter is of course $[(\exists \Omega^{3})\MUC(\Omega)]_{\ECF}$.  As it turns out, the intuitionistic fan functional even has a \emph{primitive recursive} associate $\widehat{\FAN}$ which may be found in \cite{noortje}*{p.\ 102}. We thus also have $\WKL\asa \TOF(\widehat{\FAN})$ (See \cite{longmann}*{\S7.3.4}).  
The following theorem shows that the special fan functional becomes `more computable' thanks to $\ECF$.  We shall make use of the nonstandard axiom
\be\tag{$\NUC$}
(\forall^{\st}Y^{2})(\forall f^{1}, g^{1}\leq_{1}1)(f\approx_{1}g\di Y(f)=_{0}Y(g)), 
\ee
Note that $\NUC$ expresses that every type two functional is nonstandard uniformly continuous on Cantor space, akin to Brouwer's continuity theorem (\cite{brouw}).
\begin{thm}\label{koreki} From $\P_{0}\vdash {\NUC}\di \STP$, terms $t_{0}, t_{1}$ can be extracted such that $\RCAo\vdash (\forall \Omega)(\MUC(\Omega)\di \SCF(t_{0}(\Omega)))$ and $\RCA_{0}^{2}+\WKL\vdash [\SCF]_{\ECF}(t_{1}(\widehat{\FAN}))$.
\end{thm}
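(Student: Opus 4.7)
The plan is to invoke Theorem \ref{consresultcor} on the assumed proof $\P_{0}\vdash\NUC\di\STP$, once both sides have been placed in the normal form $(\forall^{\st}\vec x)(\exists^{\st}\vec y)\varphi(\vec x,\vec y)$ required by the term-extraction theorem; the second term $t_{1}$ is then obtained by feeding the resulting implication through the $\ECF$-translation \eqref{bergje}.

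First, I would bring $\NUC$ into normal form.  Resolving `$\approx_{1}$' in $\NUC$ gives
\[
(\forall^{\st}Y^{2})(\forall f,g\leq_{1}1)\big[(\forall^{\st}N^{0})(\overline{f}N=_{0}\overline{g}N)\di Y(f)=_{0}Y(g)\big],
\]
and contraposing the internal implication while applying Idealisation \textsf{I} (cf.\ Remark \ref{simply}) produces
\[
(\forall^{\st}Y^{2})(\exists^{\st}N^{0})(\forall f,g\leq_{1}1)\big[\overline{f}N=_{0}\overline{g}N\di Y(f)=_{0}Y(g)\big],
\]
whose witness is exactly the intuitionistic fan functional $\Omega$ of $\MUC(\Omega)$.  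By Theorem \ref{klak}, $\STP$ already has the normal form \eqref{frukkklk2}, whose witness is the special fan functional $\Theta$ of $\SCF(\Theta)$.  Combining these and following Template \ref{tempe} (pull the $(\forall^{\st})$-quantifiers to the front, strengthen the antecedent to drop `st' on $\Omega$, and apply \textsf{I}), the implication $\NUC\di\STP$ acquires the normal form
\[
(\forall^{\st}\Omega^{3},g^{2})(\exists^{\st}w^{1^{*}},k^{0})\big[\MUC(\Omega)\di \Psi(g,w,k)\big],
\]
where $\Psi$ is the internal kernel of \eqref{frukkklk2}.  Theorem \ref{consresultcor} then yields a closed term $t_{0}$ producing $(w,k)$ from $(\Omega,g)$, which is exactly $\RCAo\vdash(\forall\Omega)(\MUC(\Omega)\di\SCF(t_{0}(\Omega)))$.

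For the second claim, I would feed this implication through the $\ECF$-translation via \eqref{bergje}.  Since $\ECF$ replaces every type-two object by a type-one associate, $[\MUC(\Omega)]_{\ECF}$ is $\TOF(\gamma)$ as observed in the discussion preceding the theorem, while $[\SCF(t_{0}(\Omega))]_{\ECF}$ becomes $[\SCF]_{\ECF}(t_{1}(\gamma))$ with $t_{1}$ the associate-level term obtained by unfolding each application of $\Omega^{3}$ in $t_{0}$ into an application of $\gamma^{1}$.  The $\ECF$-image of the first claim is thus $\RCA_{0}^{2}\vdash(\forall\gamma)(\TOF(\gamma)\di[\SCF]_{\ECF}(t_{1}(\gamma)))$, and since the primitive recursive associate $\widehat{\FAN}$ satisfies $\TOF(\widehat{\FAN})$ in $\RCA_{0}^{2}+\WKL$, substituting $\gamma:=\widehat{\FAN}$ delivers the second claim.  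The main obstacle will be the bookkeeping around the $\ECF$-translation of the composite term $t_{0}(\Omega)$: one must verify that each sub-application of $\Omega^{3}$ inside $t_{0}$ occurs at arguments given by terms of G\"odel's $T$, hence continuous on Baire space, so that substituting the associate $\gamma$ for $\Omega$ commutes with the syntactic $\ECF$-translation and yields a bona fide term $t_{1}(\gamma)$; this is however automatic because $t_{0}$ is a closed term in $\mathcal T^{*}$ extracted by Theorem \ref{consresultcor}, whose applications of its type-three argument all unfold via the standard clauses of $\ECF$.
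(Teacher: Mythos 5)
Your proposal is correct and follows essentially the same route as the paper: normalise $\NUC$ to the form witnessed by the intuitionistic fan functional, use the normal form \eqref{frukkklk2} of $\STP$ from Theorem \ref{klak}, extract $t_{0}$ via Theorem \ref{consresultcor}, and then push the result through $\ECF$ using $[\MUC(\Omega)]_{\ECF}=\TOF(\gamma)$ and $\WKL\asa\TOF(\widehat{\FAN})$. The only difference is cosmetic: the paper additionally spells out the explicit witnesses for \eqref{frukkklk2} (taking $w$ to be all binary $\sigma*00\dots$ with $|\sigma|=\Omega(g)+1$ and $k$ the maximum of $g$ over these), whereas you rely on the hypothesised proof of $\NUC\di\STP$, which the theorem statement indeed grants.
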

\begin{proof}
As in Theorem \ref{hark}, the normal form of $\NUC$ is readily obtained as follows:  
\be\label{XXX}
(\forall^{\st}Y^{2})(\exists^{\st}N)(\forall f^{1}, g^{1}\leq_{1}1)(\overline{f}N=\overline{g}N\di Y(f)=_{0}Y(g)), 
\ee
Applying $\HAC_{\INT}$ to \eqref{XXX} as in Remark \ref{simply} yields that    
\[
(\exists^{\st} \Omega^{3})(\forall^{\st}Y^{2})(\exists^{\st}N)(\forall f^{1}, g^{1}\leq_{1}1)(\overline{f}\Omega(Y)=\overline{g}\Omega(Y)\di Y(f)=_{0}Y(g)).
\]
To obtain the nonstandard implication, note that $\STP$ is equivalent to \eqref{frukkklk2} and define $ k^{0}$ as in the latter as the (clearly standard) maximum of $g(\sigma*00)$ for all binary $\sigma^{0^{*}}$ of length $\Omega(g)+1$, while $w^{1}$ is the (standard) collection of all $\sigma_{i}*00\dots$ where the binary $\sigma_{i}^{0^{*}}$ has length $\Omega(g)+1$.  Hence, $\NUC$ implies \eqref{frukkklk2} and hence $\STP$.  
Since $\NUC$ and $\STP$ have normal forms \eqref{XXX} and \eqref{frukkklk2}, applying term extraction as in Theorem \ref{consresultcor} to $\P_{0}\vdash [\NUC\di\STP]$ readily yields the term $t_{0}$ from the theorem.  
  
\medskip

Finally, applying the $\ECF$-translation as in \eqref{bergje} to $\RCAo\vdash (\forall \Omega)(\MUC(\Omega)\di \SCF(t_{0}(\Omega)))$, we obtain 
that $\RCA_{0}^{2}\vdash [ (\forall \Omega)(\MUC(\Omega)\di \SCF(t_{0}(\Omega)))]_{\ECF} $, which becomes 
$\RCA_{0}^{2}\vdash (\forall \gamma^{1})(\TOF(\gamma)\di [\SCF]_{\ECF}(t_{1}(\gamma)))$ as $[\MUC(\Omega)]_{\ECF}$ is $\TOF(\gamma)$.
Since $\WKL\asa \TOF(\widehat{\FAN})$ by \cite{longmann}*{\S7.3.4}, the theorem now follows.  
\end{proof}
Note that parts of Theorem \ref{koreki} may be found in \cite{samGH, dagsam}.  We have the following corollary pertaining to Theorem \ref{konky2}.
\begin{cor}\label{klike}
From the proof of $\IVT_{\textsf{\wat}}'$ in $\P_{0}+\STP$, a term $v^{1}$ can be extracted such that $\RCA_{0}^{2}+\WKL$ proves 
$[\IVT_{\ef}']_{\ECF}(v(\widehat{\FAN}))\big)$.
\end{cor}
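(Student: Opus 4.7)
The plan is to combine Theorem \ref{konky2} with the second half of Theorem \ref{koreki} via the $\ECF$-translation. Theorem \ref{konky2} produces a term $u$ such that
\[
\RCAo \vdash (\forall \Theta^{3})\big(\SCF(\Theta) \di \IVT_{\ef}'(u(\Theta))\big).
\]
Applying Kohlenbach's meta-principle \eqref{bergje} to this theorem of $\RCAo$ yields
\[
\RCA_{0}^{2} \vdash \big[(\forall \Theta^{3})\big(\SCF(\Theta) \di \IVT_{\ef}'(u(\Theta))\big)\big]_{\ECF}.
\]
Under $\ECF$, the type-$3$ variable $\Theta$ is replaced by a type-$1$ associate $\gamma$, and the term $u(\Theta)$ is replaced by the associate-level term $v_{0}(\gamma)$ obtained by reading off the $\ECF$-interpretation of $u$. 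Hence the previous display unfolds to
\[
\RCA_{0}^{2} \vdash (\forall \gamma^{1})\big([\SCF]_{\ECF}(\gamma) \di [\IVT_{\ef}']_{\ECF}(v_{0}(\gamma))\big).
\]

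Now I would invoke the second half of Theorem \ref{koreki}, which supplies a term $t_{1}$ with $\RCA_{0}^{2}+\WKL \vdash [\SCF]_{\ECF}(t_{1}(\widehat{\FAN}))$, where $\widehat{\FAN}$ is the primitive recursive associate of the intuitionistic fan functional. Instantiating $\gamma$ with $t_{1}(\widehat{\FAN})$ in the previous display, and setting $v(\alpha) := v_{0}(t_{1}(\alpha))$, I obtain
\[
\RCA_{0}^{2}+\WKL \vdash [\IVT_{\ef}']_{\ECF}(v(\widehat{\FAN})),
\]
which is exactly the claim of the corollary.

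The only genuine obstacle is purely bookkeeping: one must check that the term $u$ delivered by Theorem \ref{konky2} (via term extraction from $\P_{0}+\STP$) is a closed term in $\T^{*}$ of the appropriate type, so that the $\ECF$-interpretation indeed produces an associate-level term $v_{0}$ definable in $\RCA_{0}^{2}$. This is routine, since the extraction theorems \ref{consresultcor} and \ref{consresultcor2} produce closed terms in $\T^{*}$, and the $\ECF$-image of such a term is a primitive recursive functional on associates, as laid out in \cite{troelstra1}*{\S2.6.5}. Composing with $t_{1}$ preserves this property, so $v$ is a legitimate term of $\RCA_{0}^{2}$.
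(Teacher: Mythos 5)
Your proposal is correct and takes essentially the same route as the paper: the paper's one-line proof (``apply the $\ECF$-translation to the conclusion of Theorem \ref{konky2}'') tacitly includes exactly the step you make explicit, namely discharging the antecedent $[\SCF]_{\ECF}$ by instantiating with $t_{1}(\widehat{\FAN})$ from Theorem \ref{koreki}, which is where the hypothesis $\WKL$ enters. Your version simply spells out the bookkeeping the paper leaves implicit.
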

\begin{proof}
Apply the $\ECF$-translation to the conclusion of Theorem \ref{konky2}.
\end{proof}
Note that $\WKL$ is non-constructive, but the term $v(\widehat{\FAN})$ is computable.  
Furthermore, the only real modification the $\ECF$-translation bestows upong the intermediate value theorem from Theorem \ref{konky2} is the replacement of continuous functions by associates (which can always be done given $\WKL$ by \cite{kohlenbach4}*{Theorem 4.6}).  

\medskip

In conclusion, we have observed that $\STP$ is indeed non-constructive in nature in that it gives rise to the special fan functional 
as in Theorem \ref{konky2}.  However, a somewhat technical detour (using the $\ECF$-interpretation) still yields computational information as in Corollary \ref{klike}.


\subsection{Extreme value theorem}\label{compa2}
We briefly discuss Wattenberg's treatment from \cite{watje}*{III} of Weierstra\ss' extreme value theorem inside Nonstandard Analysis.  
\subsubsection{Preliminaries}
The extreme value theorem $(\WMX)$ is a basic result from calculus and can be formulated as follows.  
\begin{thm}[$\WMX$]
Suppose that $X$ is compact and that $f:X\di \R$ is continuous. Then there is $x\in X$ such that $(\forall y\in X)(f(y)\leq_{\R}f(x))$.  
\end{thm}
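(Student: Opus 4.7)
The plan is to adapt Wattenberg's nonstandard template from Section \ref{good} for $\IVT$ to the present extreme-value setting. First, I would invoke $F$-compactness of $X$ to obtain a standard sequence $x_{(\cdot)}$ such that, for every $x\in X$ and every $N\in\Omega$, there is $k\leq N$ with $x_{k}\approx x$. Fixing a nonstandard $N^{0}$, I would then define, by a bounded internal search, an index $k_{0}\leq N$ with $[f(x_{k_{0}})](N)=\max_{k\leq N}[f(x_{k})](N)$; this plays the role of item (\ref{frag}) in the original proof of $\IVT$ and is available internally, since it is just a finite maximum of rationals.

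The key step is then to use nonstandard continuity of $f$ to transfer near-maximality from the nonstandard grid to all of $X$. Given any $y\in X$, $F$-compactness yields $k\leq N$ with $x_{k}\approx y$, and nonstandard continuity gives $f(x_{k})\approx f(y)$; combining this with $f(x_{k})\leq_{\R}f(x_{k_{0}})$ (up to an infinitesimal) yields $f(y)\lessapprox f(x_{k_{0}})$. In direct parallel to $\IVT_{\wat}$ in Theorem \ref{wakko}, this already delivers a Wattenberg-style nonstandard version $\WMX_{\wat}$ producing an \emph{approximate nonstandard} maximiser $t(f,x_{(\cdot)},N)$, without invoking $\STP$. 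To recover classical $\WMX$ as stated one must additionally apply $\STP$ (or $\Omega\textup{\textsf{-CA}}$) to obtain a standard $x\in X$ infinitely close to $x_{k_{0}}$; but as already observed in Remark \ref{notnotnting} for $\IVT$, in the presence of multiple maximisers the witness $x_{k_{0}}$ need not be $\Omega$-invariant, so this final step is genuinely non-constructive, in line with Wattenberg's overall view.

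To obtain the corresponding effective statement I would follow Template \ref{tempe}: translate $F$-compactness of $X$ into a modulus of total boundedness (as in \eqref{cardi}), translate nonstandard continuity into a modulus of continuity, push all standard quantifiers outside using \textsf{I} and the independence-of-premises step from Section \ref{good}, and finally apply Theorem \ref{consresultcor} to extract a term producing, from these two moduli and a precision $\frac{1}{k}$, an explicit $x\in X$ with $f(y)\leq_{\R}f(x)+\frac{1}{k}$ for all $y\in X$. The main obstacle I expect lies in the continuity hypothesis: if one tried to derive nonstandard continuity from ordinary $\eps$-$\delta$ continuity via $\NSC_{1}$ then, by Theorem \ref{hark2}, one reintroduces $\paai$ and the whole extraction collapses; the effective conclusion should therefore be formulated with the modulus of continuity provided as explicit input data, exactly as in $\IVT_{\ef}$.
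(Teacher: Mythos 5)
Your proposal is correct and follows essentially the same route as the paper: it reproduces Wattenberg's steps (i)--(iii) from Section \ref{goof} ($F$-compact grid, internal finite maximisation over the grid, transfer to all of $X$ via nonstandard uniform continuity), notes as the paper does that the final appeal to $\STP$/$\Omega$\textsf{-CA} is the non-constructive step and that $\Omega$-invariance can fail for multiple maximisers (cf.\ Remark \ref{notnotnting}), and obtains the effective version via Template \ref{tempe} and Theorem \ref{consresultcor} exactly as in Theorem \ref{hark4}. Your replacement of the recursive $\lessapprox$-comparison by a finite maximum of rational approximations $[f(x_k)](N)$ is only a cosmetic variant that the paper itself endorses (``approximations up to nonstandard precision'').
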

As is well-known, $\WMX$ implies a non-trivial fragment of the law of excluded middle (See e.g.\ \cite{beeson1}*{I.6} or \cite{mandje}) and is therefore rejected in constructive mathematics.
A slight modification of Wattenberg's nonstandard version of $\WMX$ will be shown to yield the following \emph{effective} version in Section~\ref{goof}, similar to \cite{bish1}*{p.\ 89}.  
\begin{thm}[$\WMX_{\ef}(s)$]
For $k^{0}$ and $f:X\di \R$ uniformly continuous with modulus $g$ on the compact space $X$ with modulus of totally boundedness $h$, we have $(\forall x\in X)( f(x)\leq_{\R}f(s(f,g,h,k))+\frac{1}{k})$.
\end{thm}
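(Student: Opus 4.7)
The plan is to follow Template \ref{tempe} closely: first establish a Wattenberg-style nonstandard version $\WMX_{\wat}$ of the theorem in $\P_0$, and then extract the effective version $\WMX_{\ef}(s)$ via Theorem \ref{consresultcor}, using $F$-compactness as the discretisation device (exactly the role the grid $x_j = a + jh$ played for $\IVT$ in Section \ref{good}).

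First, I would formulate $\WMX_{\wat}$ as follows: for any nonstandard $N^0$, any standard $F$-compact $X$ with standard grid $x_{(\cdot)}$, and any standard nonstandard uniformly continuous $f: X \di \R$, $(\forall x \in X)(f(x) \lessapprox f(t(f, x_{(\cdot)}, N)))$, where $t(f, x_{(\cdot)}, N) := x_{j_0}$ with $j_0 \leq N$ obtained by bounded search so that $[f(x_{j_0})](2^N) \geq [f(x_j)](2^N)$ for all $j \leq N$. The proof in $\P_0$ is immediate and mirrors steps \eqref{koer}--\eqref{cruxjes}: given $x \in X$, $F$-compactness supplies $k \leq N$ with $x_k \approx x$; nonstandard uniform continuity gives $f(x) \approx f(x_k)$; and by construction $f(x_k) \leq f(x_{j_0}) + \frac{1}{2^{N-1}} \approx f(x_{j_0})$. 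We deliberately do not take a standard part of $x_{j_0}$, thereby avoiding $\STP$, in keeping with Osswald's local constructivity.

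Second, I would convert $\WMX_{\wat}$ to normal form following the passage \eqref{konkey}--\eqref{cruxi} of Theorem \ref{hark}. Resolving $\approx$ and $\lessapprox$ and applying underspill produces, for each standard $l^0$, the statement $(\exists^{\st} n)(\forall N \geq n)(\forall x \in X)(f(x) \leq f(t(f, x_{(\cdot)}, N)) + \frac{1}{l})$; introducing standard moduli $g^1$ of uniform continuity (as in Definition \ref{mikeh}) and $h^1$ of totally boundedness for $X$ (as in the antecedent of \eqref{cardi}, so that $h(k)$ bounds the length of a $\frac{1}{k}$-net), I would drop `st' from the antecedent, push all remaining standard quantifiers outside, and apply \emph{Idealisation} \textsf{I} (bearing in mind Remark \ref{simply}) to arrive at
\be\label{wmxcruxi}
(\forall^{\st} g^{1}, h^{1}, l^{0})(\exists^{\st} n^{0})(\forall f, X)\, C(g, h, l, f, X, n),
\ee
with $C$ internal.

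Third, applying Theorem \ref{consresultcor} to \eqref{wmxcruxi} yields a closed term $\tau$ producing a finite sequence of candidate $n$'s; setting $s(f, g, h, k) := t(f, x_{(\cdot)}^{h}, \max_{i<|\tau(g,h,k)|} \tau(g,h,k)(i))$, where $x_{(\cdot)}^{h}$ is the primitive-recursive enumeration of the $\frac{1}{k}$-nets read off from $h$, yields $\WMX_{\ef}(s)$ in $\RCAo$. The main obstacle I anticipate is the bookkeeping required to uniformly recover the discretising grid $x_{(\cdot)}$ from the totally boundedness modulus $h$: the nonstandard argument treats the grid as a given standard object, whereas in the effective translation it must be synthesised from $h$, and this synthesis must be compatible with the occurrence of $t(f, x_{(\cdot)}, N)$ inside the term $\tau$. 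Apart from this bookkeeping, the argument is a direct adaptation of the proof of Theorem \ref{hark}, and goes through constructively in $\H$ by the same argument as the corollary to Theorem \ref{hark}.
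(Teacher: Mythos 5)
Your proposal is correct and follows essentially the same route as the paper: one proves the nonstandard version $\WMX_{\wat}$ in $\P_{0}$ from $F$-compactness and nonstandard uniform continuity (using $2^{N}$-approximations to make the comparisons on the grid decidable, exactly as the paper stipulates), and then extracts the term $s$ via the normal-form conversion of Template \ref{tempe} and Theorem \ref{consresultcor}, as in the proof of Theorem \ref{hark4}. The grid-versus-modulus bookkeeping you flag is real but harmless, since the standard sequence $x_{(\cdot)}$ is part of the data of $F$-compactness and simply remains a parameter of the extracted term.
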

Note that this version no longer involves Nonstandard Analysis.  Furthermore, the term $s$ is `read off' from the (modified) Wattenberg proof.  
Thus, Wattenberg's claims about the effective content of Nonstandard Analysis are again at least partially correct.   
One the other hand, Wattenberg explicitly uses \emph{Transfer} in the proof of \cite{watje}*{III.6}, which is problematic if one is interested in computational content, as was established above.

%

\subsubsection{Constructive extreme value theorem and Nonstandard Analysis}\label{goof}
Wattenberg proves various nonstandard versions of $\WMX$ inside Nonstandard Analysis in \cite{watje}*{III.7}.  
He refers to (a trivial reformulation of)  the Theorem \ref{wakko4} below as 
\begin{center}
\emph{a completely ``constructive'' version of the Extreme Value Theorem}
\end{center}
in \cite{watje}*{p.\ 308}.  Note that Wattenberg uses the notion of an `implementation' (See \cite{watje}*{Def.\ III.8}) rather than nonstandard uniform continuity, although both essentially amount to the same thing in this context.  
\begin{thm}[$\WMX_{\textsf{\wat}}$]\label{wakko4}
For nonstandard $N^{0}$, $F$-compact $X$ with standard sequence $x_{(\cdot)}$, and nonstandard uniformly continuous $f:X\di \R$, we have $(\forall x\in X)(f(x)\lessapprox f(t(x_{(\cdot)}, N)))$.  
\end{thm}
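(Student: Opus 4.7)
The plan is to mirror Wattenberg's original argument, which is essentially the nonstandard analogue of the finite-search maximisation familiar from Bishop (\cite{bish1}*{p.~89}). The role played by the equispaced partition $\{a+jh : j\leq N\}$ in the proof of $\IVT_{\wat}$ is now played by the $F$-compactness grid $\{x_0,x_1,\dots,x_N\}$, and the role of the sign-change test in \eqref{teke2} is replaced by a bounded argmax on rational approximations.

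First, I would make $t$ explicit by defining
\[
t(x_{(\cdot)},N) := x_{j_0}, \quad j_{0}:=(\mu j\leq N)(\forall i\leq N)\bigl([f(x_j)](2^{N})\geq_{0}[f(x_i)](2^{N})\bigr),
\]
i.e.\ $t(x_{(\cdot)},N)$ is the earliest sample point among $x_0,\dots,x_N$ that maximises $f$ to the (nonstandard) precision $\tfrac{1}{2^N}$. Since the search is bounded and the comparison is performed on the rational approximations $[\cdot](2^N)$ supplied by Definition~\ref{keepintireal}, $t$ is a closed term of $\mathcal{T}^*$, exactly as in \eqref{teke2}. By construction, $f(x_j)\lessapprox f(t(x_{(\cdot)},N))$ for every $j\leq N$, since the slack $\tfrac{1}{2^N}$ is infinitesimal for $N\in\Omega$.

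Next, fix an arbitrary $x\in X$. By $F$-compactness of $X$ witnessed by $x_{(\cdot)}$, applied to the fixed nonstandard $N\in\Omega$, there exists $k\leq N$ with $|x_k-x|_X\approx 0$, i.e.\ $x_k\approx x$. Nonstandard uniform continuity of $f$ then yields $f(x_k)\approx f(x)$. Combining this with the maximality built into $t$, we obtain
\[
f(x)\approx f(x_k)\lessapprox f(t(x_{(\cdot)},N)),
\]
which is precisely $\WMX_{\wat}$. Note that one only uses nonstandard uniform continuity of $f$ on $X$ and the defining property of $F$-compactness, so the argument goes through verbatim in $\P_0$ without any appeal to \emph{Transfer} or \emph{Standard Part}, in keeping with Osswald's local constructivity.

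The main obstacle, as in Remark~\ref{notnotnting} and in the discussion around \eqref{teke2}, is purely a matter of bookkeeping: the argmax must be carried out on rational truncations because `$\leq_{\R}$' is undecidable, and one must verify that the induced $\tfrac{1}{2^N}$ gap is absorbed into `$\lessapprox$' thanks to $N\in\Omega$. This is handled exactly as Wattenberg handles the analogous issue for the sign test in his IVT proof. Once $\WMX_{\wat}$ is in hand, the effective version $\WMX_{\ef}(s)$ can be extracted by following Template~\ref{tempe}: produce normal forms for the antecedents (the normal form for $F$-compactness used in the proof of \eqref{cardi}, and the one for nonstandard uniform continuity \eqref{soareyou4}), push standard quantifiers outside via \textsf{I}, and apply Theorem~\ref{consresultcor} to read off the term $s$ from $t$ together with the moduli $g,h$.
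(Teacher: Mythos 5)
Your proposal is correct and follows essentially the same route as the paper: fix the $F$-compactness grid $x_0,\dots,x_N$, compute a maximiser over the grid using comparisons on rational approximations of nonstandard precision (the paper does this via a recursive running maximum $t_{n+1}$, you via a bounded argmax on $[f(x_j)](2^N)$ --- these are interchangeable implementations of the same finite search), and then transfer the bound to arbitrary $x\in X$ via a grid point $x_k\approx x$ and nonstandard uniform continuity. Your explicit argmax term is in fact a cleaner rendering than the paper's displayed term \eqref{teke}, which merely reproduces the IVT sign-test term \eqref{teke2}.
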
\noindent
The term $t^{((1\di 1)\times 0)\di 0}$ from the theorem is defined as follows (where $h:=\frac{b-a}{N}$):
\be\label{teke}
t(f,N):=
\begin{cases}
(\mu j \leq N)\big(\big[f\big(jh\big)\big](2^{N})\leq_{0} 0\big)h & \textup{if such exists}\\
N+1 & \textup{otherwise}
\end{cases}.
\ee
Wattenberg proves $\WMX_{\wat}$ in \cite{watje}*{III.7} using (what amounts to) the following:
\begin{enumerate}  
\renewcommand{\theenumi}{\roman{enumi}}
\item Fix a standard sequence $x_{(\cdot)}$ and nonstandard $N^{0}$ as provided by the $F$-compactness of $X$. \label{koer2}
\item Define $t_{0}:=x_{0}$ and $t_{n+1}:= t_{n}$ if $f(x_{n+1})\lessapprox f(t_{n})$ and $t_{n+1}=x_{n+1}$ if $f(x_{n+1})\gtrapprox f(t_{n})$.  Note that for $n\leq N$, we have $f(x_{n})\lessapprox f(t_{N})$. \label{fra2g}
\item Since for every $x\in X$ there is $j\leq N$ such that $x_{j}\approx x$, we have $(\forall x\in X)(f(x)\lessapprox f(t_{N}))$ by continuity.  \label{cruxjes2}
\end{enumerate}
Note that item \eqref{cruxjes2} makes use of \emph{uniform} nonstandard continuity.  When working in $\IST$, one would apply $\STP$ to $t_{N}$ to obtain a \emph{standard} maximum for $f$.  Note that a similar remark regarding $\Omega\textsf{-CA}$ as in Remark \ref{notnotnting} applies to $\WMX_{\wat}$.

\medskip

Finally, we have the following theorem.  
\begin{thm}\label{hark4}
From the proof of $\WMX_{\textsf{\wat}}$ in $\P_{0}$, a term $u$ can be extracted such that $\RCAo$ proves $\WMX_{\ef}(u)$.
\end{thm}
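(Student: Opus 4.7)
The plan is to follow Template~\ref{tempe} applied to $\WMX_{\wat}$, in close analogy with the proof of Theorem~\ref{hark}. The goal is to bring $\WMX_{\wat}$ into a normal form $(\forall^{\st}\underline{x})(\exists^{\st}\underline{y})\psi$ over $\P_{0}$ and then invoke Corollary~\ref{consresultcor2}.

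First, I would put each hypothesis and the conclusion of $\WMX_{\wat}$ into normal form. Nonstandard uniform continuity of $f$ on $X$ yields, by standard massaging as in the proof of Corollary~\ref{idare},
\[\textstyle
(\forall^{\st}k^{0})(\exists^{\st}N^{0})\,B_{1}(f,X,k,N),
\]
where $B_{1}(f,X,k,N)\equiv(\forall x,y\in X)\big(|x-y|_{X}<_{\R}\tfrac{1}{N}\di|f(x)-f(y)|<_{\R}\tfrac{1}{k}\big)$. $F$-compactness of $X$ with standard sequence $x_{(\cdot)}$, after resolving $\approx$ and applying underspill (Theorem~\ref{doppi}), has the normal form
\[\textstyle
(\forall^{\st}l^{0})(\exists^{\st}M^{0})\,B_{2}(X,x_{(\cdot)},l,M),
\]
with $B_{2}\equiv(\forall x\in X)(\exists k\leq M)(|x_{k}-x|_{X}<_{\R}\tfrac{1}{l})$. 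For the conclusion $(\forall x\in X)(\forall N\in \Omega)(f(x)\lessapprox f(t(x_{(\cdot)},N)))$, I resolve $\lessapprox$ to $(\forall^{\st}k)(f(x)<_{\R}f(t(x_{(\cdot)},N))+\tfrac{1}{k})$, pull the $k$-quantifier out, and apply underspill to the remaining $(\forall N\in\Omega)$-statement to obtain $(\forall^{\st}k)(\exists^{\st}n)\,C(f,x_{(\cdot)},X,k,n)$ with $C\equiv(\forall N\geq n)(\forall x\in X)(f(x)\leq_{\R}f(t(x_{(\cdot)},N))+\tfrac{1}{k})$. Here $t$ is the internal term implementing Wattenberg's iterated-maximum construction at precision $2^{-N}$, namely $t(x_{(\cdot)},0):=x_{0}$ and $t(x_{(\cdot)},n{+}1):=x_{n+1}$ if $[f(x_{n+1})](2^{N})>_{0}[f(t(x_{(\cdot)},n))](2^{N})$ and $:=t(x_{(\cdot)},n)$ otherwise.

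Second, following step~(ii) of the template, I introduce standard moduli $g,h$, using that standard functionals have standard output on standard input, so that $\WMX_{\wat}$ implies
\[
(\forall^{\st}g^{1},h^{1})(\forall f,x_{(\cdot)},X)\big[(\forall k)B_{1}(f,X,k,g(k))\wedge(\forall l)B_{2}(X,x_{(\cdot)},l,h(l))\di(\forall^{\st}k)(\exists^{\st}n)C\big].
\]
Pulling $(\forall^{\st}k)$ to the front and applying Idealisation $\textsf{I}$ to push $(\exists^{\st}n)$ past the universal quantifier over $(f,x_{(\cdot)},X)$ gives the normal form
\[
(\forall^{\st}g,h,k)(\exists^{\st}n)(\forall f,x_{(\cdot)},X)\,D(f,x_{(\cdot)},X,g,h,k,n),
\]
with $D$ the obvious internal implication. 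Corollary~\ref{consresultcor2} then extracts a closed term $s\in\mathcal{T}^{*}$ such that $\RCAo$ proves $(\forall g,h,k)(\exists n\in s(g,h,k))(\forall f,x_{(\cdot)},X)\,D$. Setting $u(f,g,h,k):=t\big(x_{(\cdot)},\max_{i<|s(g,h,k)|}s(g,h,k)(i)\big)$ (absorbing $x_{(\cdot)}$ into the representation of the totally bounded $X$ via $h$, and using the convention of Remark~\ref{simply}), and unpacking the definitions of $B_{1},B_{2},C$, yields $\WMX_{\ef}(u)$.

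The main substantive point, as in the proof of Theorem~\ref{hark}, is the verification that Wattenberg's step~\eqref{cruxjes2} (transferring $f(x)\lessapprox f(t_{N})$ from grid points to arbitrary $x\in X$) goes through with the above decidable approximation $t(x_{(\cdot)},N)$ in place of the idealised $t_{N}$: the $2^{-N}$-error from the rational truncations and from each iterative comparison is absorbed by nonstandard uniform continuity whenever $N$ is sufficiently large, which is exactly the information that survives in the eventual witness $s(g,h,k)$. Everything else is routine bookkeeping already implicit in the template.
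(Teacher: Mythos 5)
Your proposal is correct and takes essentially the same route as the paper: the paper's proof consists precisely of the remark that $\WMX_{\wat}$ is provable in $\P_{0}$ from steps (i)--(iii) using $2^{-N}$-precision approximations to $f(x_{i})$ and $f(t_{i})$, followed by an appeal to Template \ref{tempe} and the term-extraction theorem, which is exactly the normal-form massaging (continuity, $F$-compactness, conclusion, moduli $g,h$, Idealisation, extraction of $s$ and definition of $u$ via the maximum of $s(g,h,k)$) that you carry out explicitly. Your recursive definition of the iterated-maximum term is a faithful implementation of the paper's item (ii), so no substantive divergence remains.
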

\begin{proof}
The proof of $\WMX_{\wat}$ inside $\P_{0}$ follows from the above steps (i)-(iii), assuming we use approximations (say up to precision $2^{N}$ for $N$ from item (i)) to $f(x_{i})$ and $f(t_{i})$.  
One then applies Theorem \ref{consresultcor} to $\P_{0}\vdash \WMX_{\wat}$ to obtain the term $u$ from the theorem, following Template \ref{tempe}.  
\end{proof}

\begin{bibdiv}
\begin{biblist}
\bib{NORSNSA}{book}{
  author={Albeverio, Sergio},
  author={H{\o }egh-Krohn, Raphael},
  author={Fenstad, Jens Erik},
  author={Lindstr{\o }m, Tom},
  title={Nonstandard methods in stochastic analysis and mathematical physics},
  series={Pure and Applied Mathematics},
  volume={122},
  publisher={Academic Press},
  date={1986},
  pages={xii+514},
}

\bib{beeson1}{book}{
  author={Beeson, Michael J.},
  title={Foundations of constructive mathematics},
  series={Ergebnisse der Mathematik und ihrer Grenzgebiete},
  volume={6},
  note={Metamathematical studies},
  publisher={Springer},
  date={1985},
  pages={xxiii+466},
}

\bib{brie}{article}{
  author={van den Berg, Benno},
  author={Briseid, Eyvind},
  author={Safarik, Pavol},
  title={A functional interpretation for nonstandard arithmetic},
  journal={Ann. Pure Appl. Logic},
  volume={163},
  date={2012},
  number={12},
  pages={1962--1994},
}

\bib{bennosam}{article}{
  author={van den Berg, Benno},
  author={Sanders, Sam},
  title={Transfer equals Comprehension},
  journal={Submitted},
  volume={},
  date={2014},
  number={},
  note={Available on arXiv: \url {http://arxiv.org/abs/1409.6881}},
  pages={},
}

\bib{bish1}{book}{
  author={Bishop, Errett},
  title={Foundations of constructive analysis},
  publisher={McGraw-Hill Book Co.},
  place={New York},
  date={1967},
  pages={xiii+370},
}

\bib{brouw}{book}{
  author={Brouwer, L. E. J.},
  title={Collected works. Vol. 1},
  note={Philosophy and foundations of mathematics; Edited by A. Heyting},
  publisher={North-Holland Publishing Co.},
  place={Amsterdam},
  date={1975},
  pages={xv+628},
}

\bib{damirzoo}{misc}{
  author={Dzhafarov, Damir D.},
  title={Reverse Mathematics Zoo},
  note={\url {http://rmzoo.uconn.edu/}},
}

\bib{polahirst}{article}{
  author={Hirst, Jeffry L.},
  title={Representations of reals in reverse mathematics},
  journal={Bull. Pol. Acad. Sci. Math.},
  volume={55},
  date={2007},
  number={4},
  pages={303--316},
}

\bib{jep}{article}{
  author={Jensen, Anton},
  title={A computer oriented version of ``non-standard analysis''},
  conference={ title={Contributions to non-standard analysis (Sympos., Oberwolfach, 1970)}, },
  book={ publisher={North-Holland}, },
  date={1972},
  pages={281--289. Studies in Logic and Found. Math., Vol. 69},
}

\bib{loeb1}{book}{
  author={Hurd, Albert E.},
  author={Loeb, Peter A.},
  title={An introduction to nonstandard real analysis},
  series={Pure and Applied Mathematics},
  volume={118},
  publisher={Academic Press Inc.},
  place={Orlando, FL},
  date={1985},
  pages={xii+232},
}

\bib{kieken}{article}{
  author={Keisler, H. Jerome},
  title={The hyperreal line},
  conference={ title={Real numbers, generalizations of the reals, and theories of continua}, },
  book={ series={Synthese Lib.}, volume={242}, publisher={Kluwer Acad. Publ.}, place={Dordrecht}, },
  date={1994},
  pages={207--237},
}

\bib{kohlenbach3}{book}{
  author={Kohlenbach, Ulrich},
  title={Applied proof theory: proof interpretations and their use in mathematics},
  series={Springer Monographs in Mathematics},
  publisher={Springer-Verlag},
  place={Berlin},
  date={2008},
  pages={xx+532},
}

\bib{kohlenbach2}{article}{
  author={Kohlenbach, Ulrich},
  title={Higher order reverse mathematics},
  conference={ title={Reverse mathematics 2001}, },
  book={ series={Lect. Notes Log.}, volume={21}, publisher={ASL}, },
  date={2005},
  pages={281--295},
}

\bib{kohlenbach4}{article}{
  author={Kohlenbach, Ulrich},
  title={Foundational and mathematical uses of higher types},
  conference={ title={Reflections on the foundations of mathematics (Stanford, CA, 1998)}, },
  book={ series={Lect. Notes Log.}, volume={15}, publisher={ASL}, },
  date={2002},
  pages={92--116},
}

\bib{kooltje}{article}{
  author={Kohlenbach, Ulrich},
  title={On uniform weak K\"onig's lemma},
  note={Commemorative Symposium Dedicated to Anne S. Troelstra (Noordwijkerhout, 1999)},
  journal={Ann. Pure Appl. Logic},
  volume={114},
  date={2002},
  number={1-3},
  pages={103--116},
}

\bib{longmann}{book}{
  author={Longley, John},
  author={Normann, Dag},
  title={Higher-order Computability},
  year={2015},
  publisher={Springer},
  series={Theory and Applications of Computability},
}

\bib{mandje}{article}{
  author={Mandelkern, Mark},
  title={Constructive continuity},
  journal={Mem. Amer. Math. Soc.},
  volume={42},
  date={1983},
  pages={v+117},
}

\bib{wownelly}{article}{
  author={Nelson, Edward},
  title={Internal set theory: a new approach to nonstandard analysis},
  journal={Bull. Amer. Math. Soc.},
  volume={83},
  date={1977},
  number={6},
  pages={1165--1198},
}

\bib{noortje}{book}{
  author={Normann, Dag},
  title={Recursion on the countable functionals},
  series={LNM 811},
  volume={811},
  publisher={Springer},
  date={1980},
  pages={viii+191},
}

\bib{dagsam}{article}{
  author={Normann, Dag},
  author={Sanders, Sam},
  title={Nonstandard Analysis, Computability Theory, and their connections},
  journal={Submitted, Available from arXiv: \url {https://arxiv.org/abs/1702.06556}},
  date={2017},
}

\bib{Oss3}{article}{
  author={Osswald, Horst},
  title={Computation of the kernels of L\'{e}vy functionals and applications},
  journal={Illinois Journal of Mathematics},
  volume={55},
  date={2011},
  number={3},
  pages={815--833},
}

\bib{Oss2}{book}{
  author={Osswald, Horst},
  title={Malliavin calculus for L\'evy processes and infinite-dimensional Brownian motion},
  series={Cambridge Tracts in Mathematics},
  volume={191},
  publisher={Cambridge University Press},
  place={Cambridge},
  date={2012},
  pages={xx+407},
}

\bib{kifar}{article}{
  author={Palmgren, Erik},
  title={Constructive nonstandard analysis},
  conference={ title={M\'ethodes et analyse non standard}, },
  book={ series={Cahiers Centre Logique}, volume={9}, publisher={Acad.-Bruylant, Louvain-la-Neuve}, },
  date={1996},
  pages={69--97},
}

\bib{rossenaap}{article}{
  author={Ross, David A.},
  title={A nonstandard proof of a lemma from constructive measure theory},
  journal={MLQ Math. Log. Q.},
  volume={52},
  date={2006},
  number={5},
  pages={494--497},
}

\bib{rosse}{article}{
  author={Ross, David A.},
  title={The constructive content of nonstandard measure existence proofs---is there any?},
  conference={ title={Reuniting the antipodes---constructive and nonstandard views of the continuum}, address={Venice}, date={1999}, },
  book={ series={Synthese Lib.}, volume={306}, publisher={Kluwer}, },
  date={2001},
  pages={229--239},
}

\bib{robinson1}{book}{
  author={Robinson, Abraham},
  title={Non-standard analysis},
  publisher={North-Holland},
  place={Amsterdam},
  date={1966},
  pages={xi+293},
}

\bib{yamayamaharehare}{article}{
  author={Sakamoto, Nobuyuki},
  author={Yamazaki, Takeshi},
  title={Uniform versions of some axioms of second order arithmetic},
  journal={MLQ Math. Log. Q.},
  volume={50},
  date={2004},
  number={6},
  pages={587--593},
}

\bib{samGH}{article}{
  author={Sanders, Sam},
  title={The Gandy-Hyland functional and the computational aspect of Nonstandard Analysis},
  year={2017},
  journal={To appear in \emph{Computability}, \url {http://arxiv.org/abs/1502.03622}},
}

\bib{sambon}{article}{
  author={Sanders, Sam},
  title={The unreasonable effectiveness of Nonstandard Analysis},
  year={2015},
  journal={Submitted; Available from arXiv: \url {http://arxiv.org/abs/1508.07434}},
}

\bib{samzoo}{article}{
  author={Sanders, Sam},
  title={The taming of the Reverse Mathematics zoo},
  year={2015},
  journal={Submitted, \url {http://arxiv.org/abs/1412.2022}},
}

\bib{samzooII}{article}{
  author={Sanders, Sam},
  title={The refining of the taming of the Reverse Mathematics zoo},
  year={2016},
  journal={To appear in Notre Dame Journal for Formal Logic, \url {http://arxiv.org/abs/1602.02270}},
}

\bib{venice}{collection}{
  title={Reuniting the antipodes: constructive and nonstandard views of the continuum},
  series={Synthese Library},
  volume={306},
  booktitle={Proceedings of the symposium held in Venice May 16--22 1999},
  editor={Schuster, Peter},
  editor={Berger, Ulrich},
  editor={Osswald, Horst},
  publisher={Kluwer},
  date={2001},
  pages={xiv+316},
}

\bib{simpson2}{book}{
  author={Simpson, Stephen G.},
  title={Subsystems of second order arithmetic},
  series={Perspectives in Logic},
  edition={2},
  publisher={CUP},
  date={2009},
  pages={xvi+444},
}

\bib{sc}{article}{
  author={Suppes, Patrick},
  author={Chuaqui, Rolando},
  title={A finitarily consistent free-variable positive fragment of Infinitesimal Analysis},
  year={1993},
  journal={Proceedings of the IXth Latin American Symposium on Mathematical Logic Notas de Logica Mathematica},
  volume={38},
  pages={1-59},
}

\bib{troelstra1}{book}{
  author={Troelstra, Anne Sjerp},
  title={Metamathematical investigation of intuitionistic arithmetic and analysis},
  note={Lecture Notes in Mathematics, Vol.\ 344},
  publisher={Springer Berlin},
  date={1973},
  pages={xv+485},
}

\bib{fath}{book}{
  author={V{\"a}th, Martin},
  title={Nonstandard analysis},
  publisher={Birkh\"auser Verlag, Basel},
  date={2007},
  pages={viii+252},
}

\bib{watje}{article}{
  author={Wattenberg, F.},
  title={Nonstandard analysis and constructivism?},
  journal={Studia Logica},
  volume={47},
  date={1988},
  pages={303--309},
}

\bib{nsawork2}{collection}{
  title={Nonstandard analysis for the working mathematician},
  series={Mathematics and its Applications},
  volume={510},
  editor={Wolff, Manfred},
  editor={Loeb, Peter A.},
  publisher={Kluwer},
  date={2015},
  note={Second edition},
}

\end{biblist}
\end{bibdiv}

\bye